\newcommand{\mycolor}{Navy}
\newtheorem{The}{Theorem}[section]
\newtheorem{Lem}[The]{Lemma}
\newtheorem{Prop}[The]{Proposition}
\newtheorem{Cor}[The]{Corollary}
\newtheorem{Rem}[The]{Remark}
\newtheorem{Def}[The]{Definition}
\newcommand{\C}{\mathbb{C}}
\newcommand{\R}{\mathbb{R}}
\newcommand{\N}{\mathbb{N}}
\newcommand{\Z}{\mathbb{Z}}
\newcommand{\E}{\mathcal{E}}
\newcommand{\F}{\mathcal{F}}
\newcommand{\dt}{\partial_t}
\newcommand{\MAu}{(dd^cu)^n}
\begin{document}
 \title[Parabolic Monge-Amp\`ere equations]{Viscosity solutions
 	to parabolic complex Monge-Amp\`ere equations} 
\setcounter{tocdepth}{1}
\author{Hoang-Son Do} 
\address{Institute of Mathematics \\ Vietnam Academy of Science and Technology \\18
Hoang Quoc Viet \\Hanoi \\Vietnam}
\email{hoangson.do.vn@gmail.com, dhson@math.ac.vn}
\author{Giang Le}
\address{Department of Mathematics, Hanoi National University of Education, 136-Xuan Thuy, Cau Giay, Hanoi, VietNam}
\email{legiang@hnue.edu.vn, legiang01@yahoo.com}
\author{Tat Dat T\^o}
\address{Ecole Nationale de l'Aviation Civile and Institut de Mathématiques de Toulouse (Chercheur associ\'e), Unversit\'e F\'ed\'erale de Toulouse\\
7, Avenue Edouard Belin\\
FR-31055 Toulouse Cedex}
\email{tat-dat.to@enac.fr, tatdat.to@gmail.com}
\date{\today\\The first-named author was funded by the Vietnam National Foundation for Science and Technology Development (NAFOSTED) under grant number 101.02-2017.306.\\ {\it Keywords:} Viscosity solutions, Parabolic Monge-Amp\`ere equation, pluripotential theory.}
\maketitle
\begin{abstract}
In this paper, we study the Cauchy-Dirichlet  problem for  Parabolic complex Monge-Amp\`ere equations  on a strongly pseudoconvex domain  using the viscosity method. We extend the  results in \cite{EGZ15b} on the existence of solution and the convergence 
at infinity. We also establish the H\"older   regularity  of  the solutions when the Cauchy-Dirichlet data are H\"older continuous. 
\end{abstract}
\section{Introduction}
In \cite{ST,ST12}, Song and Tian gave a conjectural picture to approach the Minimal Model Program via the K\"ahler-Ricci flow. In the Song-Tian program, one  need to  study  the  behavior of the K\"ahler-Ricci flow on mildly  singular varieties.  This requires  a theory of weak solutions for certain  degenerate parabolic complex Monge-Amp\`ere equations modelled on   
\begin{equation} \label{PMA_0}
(dd^c u)^n=e^{\dt u(t,z) + F(t,z,u)} \mu,
\end{equation}
where $\mu $ is a volume form, and  $u$ is $t$-dependent K\"ahler potential on a compact K\"ahler manifold.  

\medskip
A viscosity  approach for complex elliptic    Monge-Amp\`ere equations was  established  in   \cite{EGZ,EGZ15,Wa12, HL} (see also  \cite{DDT} for a  recent generalization). In \cite{EGZ,EGZ15}, Eyssidieux-Guedj-Zeriahi modified  the arguments  in  \cite{IL90,CIL92} to prove  local and global comparison principles  and solve  complex Monge-Amp\`ere equations  on both compact K\"ahler manifolds and complex domains.   They also compared the  viscosity solutions  and  the plurisubharmonic solutions.  On the other hand, using  the  contact set techniques (the Alexandrov–Bakelman–Pucci estimate),  Wang \cite{Wa12} also proved a local viscosity comparison principle and applied to the  Dirichlet problem for  Monge-Amp\`ere equations without using the argument  in \cite{IL90,CIL92}. In \cite{HL}, Harvey and Lawson also used the  viscosity method to the Dirichlet problem for the {\sl homogeneous} complex Monge-Amp\`ere equation on smooth hyperconvex domains in Stein manifolds.

\medskip
A complementary viscosity approach for  degenerate  {\sl parabolic} Monge-Amp\`ere equations has been developed recently  by  Eyssidieux-Guedj-Zeriahi
\cite{EGZ15b} in domains of $\C^n$ and \cite{EGZ16,EGZ18} on compact K\"ahler manifolds (see also \cite{To19} for a generalization).  This approach  also adapted the viscosity approach developped by Lions et al. (see \cite{IL90,CIL92}) to the complex case, using the elliptic side of the theory which was developed in \cite{EGZ}.

\medskip
In \cite{EGZ15b}, Eyssidieux-Guedj-Zeriahi studied  a  Cauchy-Dirichlet  problem  for \eqref{PMA_0} in which  the density $\mu$ and  parabolic boundary condition  are  independent of time.  They  proved  that in this case  the  Cauchy-Dirichlet  problem  for \eqref{PMA_0} admits a solution if the problem is {\sl admissible} (see below).  

In this note,  we solve  a more general Cauchy-Dirichlet  problem on a pseudoconvex domain for \eqref{PMA_0} in which the density $\mu$ and the  parabolic boundary condition  depend on time.  We also analyzed  the solvability of the  Cauchy-Dirichlet  problem by giving several characterizations for the admissible condition.  
In addition, we  establish the H\"older regularity of the solutions. Finally we prove that the solution  of the Cauchy-Dirichlet  problem converges, as $t\rightarrow \infty$, to  the solution of the corresponding Dirichlet  problem, extending the convergence result in \cite{EGZ15b}.

\medskip
There is a well established pluripotential theory of weak solutions to elliptic complex Monge-Amp\`ere equations, following the pionneering work of Bedford and Taylor \cite{BT1,BT2} in local case, but the similar theory for the parabolic side only developed recently \cite{GLZ1,GLZ2}. It is very  interesting to compare the viscosity and pluripotential concepts, this requires Theorem \ref{mainexistence} below. We refer the reader to \cite{GLZ3} for more details.

\medskip
We now explain the precise context. Let $\Omega\subset\C^n$ be a smooth bounded strongly
 pseudoconvex domain and $T\in (0, \infty)$. We consider the Cauchy-Dirichlet problem
\begin{equation}\label{PMA}
\begin{cases}
e^{\dt u+F(t, z, u)}\mu(t, z)=\MAu\qquad\mbox{ in }\qquad \Omega_T,\\
u=\varphi\qquad\mbox{in}\qquad [0, T)\times\partial\Omega,\\
u(0, z)=u_0(z)\qquad\mbox{in}\quad\bar{\Omega},
\end{cases}
\end{equation}
where
\begin{itemize}
	\item  $\Omega_T=(0, T)\times\Omega$.
	\item $F(t, z, r)$ is continuous in $[0, T]\times\bar{\Omega}\times\R$ and non-decreasing in $r$.
	\item $\mu(t, z)=f(t, z)dV$,
	 where $dV$ is the standard volume form in $\C^n$ and $f\geq 0$ is a bounded  continuous function in $[0, T]\times\Omega$.
	 \item $\varphi(t, z)$ is a continuous function in $[0, T]\times\partial\Omega$.
	 \item $u_0(z)$ is continuous in $\bar{\Omega}$ and plurisubharmonic in $\Omega$ such that $u_0(z)=\varphi(0, z)$ in $\partial\Omega$.
\end{itemize}

By \cite[Definition 5.6]{EGZ15b}, $(u_0, \mu (0, .))$ is called {\sl admissible} if for all $\epsilon>0$,
 there exists $u_{\epsilon}\in PSH(\Omega)\cap C(\bar{\Omega})$ and $C_{\epsilon}>0$ such that $u_0\leq u_{\epsilon}\leq u_0+\epsilon$ and $(dd^c u_{\epsilon})^n\leq 
e^{C_{\epsilon}}\mu(0, z)$ in the viscosity sense. We observe below that the condition  $u_\epsilon \in PSH(\Omega)$ is redundant (see Theorem \ref{mainadmissible} (i)). We still use the  {\sl admissible} term for  the same definition above without this condition.  
\begin{Def}\label{def admissible}
 We say that $(u_0, \mu (0, .))$ is
{\sl  admissible} if for all $\epsilon>0$, there
exist $u_{\epsilon}\in C(\bar{\Omega})$ and $C_{\epsilon}>0$ such that $u_0\leq u_{\epsilon}\leq u_0+\epsilon$ and $(dd^c u_{\epsilon})^n\leq 
e^{C_{\epsilon}}\mu(0, z)$ in the viscosity sense.  
\end{Def}
\medskip
It follows from \cite{EGZ15b} that if $\varphi, \mu$ are independent of $t$ and 
$(u_0, \mu)$ is admissible then \eqref{PMA} admits a solution. In this paper,
we extend  this result to the case in which
$\varphi$ and $\mu$  depend on $t$ as well.  Our first main result is the following
\begin{The}\label{mainexistence} Assume that $\mu=fdV$ where $f$ is independent of $t$. 
 If $(u_0(z), \mu)$ is admissible then the equation \eqref{PMA} admits a 
	unique solution.
\end{The}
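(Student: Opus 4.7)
The plan is to adapt Perron's method in the viscosity framework to the parabolic setting, extending the construction of \cite{EGZ15b} to time-dependent lateral boundary data. Uniqueness follows at once from the parabolic comparison principle established earlier in the paper. For existence, I would introduce
\[
\mathcal{S}:=\{v\in \mathrm{USC}(\overline{\Omega_T}):v\text{ is a viscosity subsol.\ of }\eqref{PMA},\ v\leq\varphi\text{ on }[0,T)\times\partial\Omega,\ v(0,\cdot)\leq u_0\},
\]
set $u:=\sup_{v\in\mathcal{S}}v$, and use the standard sup-convolution and bumping lemmas to show that $u^*$ is a viscosity subsolution and $u_*$ a viscosity supersolution of \eqref{PMA} in $\Omega_T$; the comparison principle then forces $u^*\leq u_*$, so $u$ is continuous and solves the PDE in $\Omega_T$. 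The remaining task is to verify that $u$ takes the correct parabolic boundary values.

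For the lateral boundary $[0,T)\times\partial\Omega$, I would invoke strong pseudoconvexity of $\Omega$ to obtain a smooth strictly plurisubharmonic defining function $\rho$ with $dd^c\rho\geq\beta$ on $\overline{\Omega}$ for some K\"ahler form $\beta$. At each $(t_0,z_0)\in[0,T)\times\partial\Omega$, local upper and lower barriers combining large multiples of $\rho(z)$, a smooth extension of $\varphi$ into $\overline{\Omega}$, $|z-z_0|^2$, and a linear-in-$t$ correction reduce the lateral boundary attainment to the interior viscosity arguments.

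At the initial slice $\{0\}\times\Omega$ the admissibility condition drives the construction. For the upper barrier, given $\epsilon>0$ choose $u_\epsilon$ as in Definition~\ref{def admissible}; since $f$ is $t$-independent, $U_\epsilon(t,z):=u_\epsilon(z)+Bt$ satisfies $(dd^c U_\epsilon)^n=(dd^c u_\epsilon)^n\leq e^{C_\epsilon}\mu\leq e^{B+F(t,z,U_\epsilon)}\mu$ as soon as $B+\min F\geq C_\epsilon$, so $U_\epsilon$ is a parabolic supersolution. Gluing with the lateral barriers above one obtains $u^*(0,\cdot)\leq u_0+\epsilon$, and letting $\epsilon\to 0$ gives $u^*(0,\cdot)\leq u_0$. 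For the lower barrier, set $v_\delta(t,z):=u_0(z)+\delta\rho(z)-Nt$: any smooth test function $\psi$ touching $v_\delta$ from above at $(t_0,z_0)$ has $\psi(t_0,\cdot)-\delta\rho$ touching $u_0$ from above, so plurisubharmonicity of $u_0$ yields $dd^c_z\psi(t_0,z_0)\geq\delta\beta$ and hence $(dd^c_z\psi)^n(t_0,z_0)\geq\delta^n\beta^n$; since $\partial_t\psi(t_0,z_0)=-N$, taking $N$ so large that $e^{-N}\sup(e^F f)<\delta^n$ makes the subsolution inequality hold. For $N$ still larger, $v_\delta\leq\varphi$ on $[0,T)\times\partial\Omega$ by continuity and boundedness of $\varphi$, so $v_\delta\in\mathcal{S}$, $u\geq v_\delta$, and sending $\delta\to 0$ yields $u_*(0,\cdot)\geq u_0$.

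The main obstacle is the careful interplay of the constants: $B$, $N$, $\delta$, $\epsilon$ must be chosen mutually compatibly with each other and with the lateral barriers so that the global sub/super-solution inequalities and the parabolic boundary matching hold simultaneously. Admissibility of $(u_0,\mu)$ supplies precisely the upper MA control on $u_0$ needed at $t=0$, while strong pseudoconvexity together with the plurisubharmonicity of $u_0$ supplies the lower initial barrier; the hypothesis $\mu=fdV$ with $f$ independent of $t$ is essential in allowing the time-independent $u_\epsilon$ to yield a supersolution after only a linear-in-$t$ correction.
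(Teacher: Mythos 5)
Your overall strategy---Perron's method for the family of subsolutions, the parabolic comparison principle (Theorem \ref{compa.the}, case (b), available because $\mu$ is independent of $t$), and barriers at the parabolic boundary with admissibility used only to produce the supersolution at the initial slice---is exactly the paper's route (Lemma \ref{lem perron}, Propositions \ref{prop.subbarrier} and \ref{prop.superbarrier}, Theorem \ref{the exist1}), and the uniqueness part is fine. However, the initial barriers as you wrote them do not satisfy the lateral boundary inequalities, and this is a genuine error rather than a constant-chasing issue. On $\partial\Omega$ one has $\rho=0$ and $u_0=\varphi(0,\cdot)$, so your lower barrier reads $v_\delta(t,z)=\varphi(0,z)-Nt$ there, and the claim that ``for $N$ still larger, $v_\delta\leq\varphi$ on $[0,T)\times\partial\Omega$ by continuity and boundedness of $\varphi$'' is false: if $\varphi(t,z)=\varphi(0,z)-\sqrt{t}$ near $t=0$, then $\varphi(0,z)-Nt>\varphi(t,z)$ for $0<t<N^{-2}$ no matter how large $N$ is. A linear-in-$t$ correction cannot absorb a non-Lipschitz modulus of continuity of $\varphi$ in $t$, and $\varphi$ is only assumed continuous. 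The same defect affects the upper barrier $U_\epsilon=u_\epsilon+Bt$: with $u_\epsilon\geq u_0$ but no strict margin, $U_\epsilon\geq\varphi$ may fail on $(0,\delta)\times\partial\Omega$, and ``gluing'' with a lateral supersolution does not repair this, since a minimum of supersolutions dominates $\varphi$ on the lateral boundary only if each term does.

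The remedy is precisely the $\epsilon$-barrier device that the paper formalizes. For the superbarrier, apply Definition \ref{def admissible} with $\epsilon/4$ and shift so that $u_0+\epsilon/4\leq u_\epsilon\leq u_0+\epsilon/2$; the strict margin gives $u_\epsilon+Bt\geq\varphi(0,\cdot)+\epsilon/4\geq\varphi(t,\cdot)$ on $\partial\Omega$ for small $t$ by uniform continuity, after which one extends the short-time supersolution to all of $[0,T)$ (Lemma \ref{lem short long}) and takes the minimum with the $t$-wise maximal plurisubharmonic extension of a smooth majorant $\varphi^\epsilon$ of $\varphi$. Symmetrically, subtract a constant of order $\epsilon$ from your $v_\delta$ (the paper uses $u_0+\epsilon(\rho-c)/(2c)-M_1t$ with $c=\sup_\Omega(-\rho)$, which equals $\varphi(0,\cdot)-\epsilon/2-M_1t$ on $\partial\Omega$). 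The resulting functions are only $\epsilon$-sub- and $\epsilon$-superbarriers, i.e.\ they match the parabolic data up to an error $\epsilon$; but that suffices, since the comparison principle then pins $u^*$ and $u_*$ to within $\epsilon$ of the Cauchy--Dirichlet data for every $\epsilon>0$, and letting $\epsilon\to0$ recovers the exact parabolic boundary values, after which your interior Perron/comparison argument closes the proof.
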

Although $F$ and $f$ play different roles, they are not entirely disjoint. If 
$f\in C([0, T]\times\overline{\Omega})$ and $f>0$ then the problem can be reduced to the case $f=1$. 
If $f\in C([0, T]\times\overline{\Omega})$ and $\dfrac{f(t, z)}{f(0, z)}$ can be extended to a continuous positive function on
$[0, T]\times\overline{\Omega}$ then the problem can be reduced to the case where $f$ is independent of $t$. 
In this paper, we also  obtain the existence result to certain cases where $f$ depends on $t$ as well. Moreover, the set
$\{z: f(t, z)=0\}$ may also depend on $t$ (and then the problem can not be reduced to the case where $f$ is independent of $t$).
 We refer to Theorem
\ref{the exist2} and Corollary \ref{cor:exits_2} for more details.

\medskip
We are now interested in the notion of admissible data. In order to avoid confusion, we always denote 
by $(dd^c\phi)_P^n$ the Monge-Amp\`ere measure in the pluripotential sense
 (see \cite{BT1, BT2, Ceg04, Blo06} for the definition)
 of a psh function $\phi$ if this measure is well-defined.
We obtain the following properties:

\begin{The}\label{mainadmissible}
Let $g\geq 0$ be a bounded continuous function in $\Omega$ and $\nu=gdV$.
Let $\phi\in PSH(\Omega)\cap C(\bar{\Omega})$. Then the following holds:
\begin{itemize}
	\item [(i)] If $(\phi, \nu)$ is  admissible then the function $u_\epsilon$ in the definition \ref{def admissible} can be taken to be psh in $\Omega$.
	\item [(ii)] Admissibility is a local property: if for every $z\in\Omega$, there exists an open neighborhood $U$ of $z$
	 such that $(\phi, \nu)$ is admissible in $U$ then $(\phi, \nu)$
	  is admissible in $\Omega$. 
	  \item [(iii)]  If $\int\limits_{\{g=0\}}(dd^c\phi)_P^n=0$
	   then $(\phi, \nu)$  is admissible.
\end{itemize}
\end{The}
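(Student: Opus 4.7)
\textbf{Proof proposal for Theorem \ref{mainadmissible}.}

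\emph{Part (i).} The natural object to consider is the plurisubharmonic envelope
$$P(u_\epsilon)(z):=\sup\{v(z):v\in PSH(\Omega),\ v\le u_\epsilon\text{ in }\Omega\}.$$
Since $\phi\in PSH(\Omega)$ with $\phi\le u_\epsilon$, the family is non-empty, hence $\phi\le P(u_\epsilon)\le u_\epsilon\le \phi+\epsilon$. Continuity of $P(u_\epsilon)$ on $\bar\Omega$ follows from continuity of $u_\epsilon$ together with strict pseudoconvexity of $\Omega$, via a standard Walsh--Bremermann envelope argument. For the viscosity upper bound, I split $\Omega$ into the coincidence set $E:=\{P(u_\epsilon)=u_\epsilon\}$ and its complement. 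Off $E$ the envelope is locally maximal plurisubharmonic, so $(dd^c P(u_\epsilon))^n_P=0$ and the viscosity inequality is trivial by the pluripotential--viscosity equivalence of \cite{EGZ}. On $E$, a $C^2$ test function $q$ touching $P(u_\epsilon)$ from above at $z_0\in E$ satisfies $q(z_0)=u_\epsilon(z_0)$; a sup-convolution regularization of $u_\epsilon$ combined with a Jensen-type perturbation of $q$ produces, at points near $z_0$, a genuine upper test function for $u_\epsilon$ whose complex Hessian tends to $dd^cq(z_0)$, and the viscosity subsolution property of $u_\epsilon$ passes in the limit.

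\emph{Part (ii).} The plan is a finite-cover gluing. By compactness of $\bar\Omega$ select open sets $U_1,\dots,U_N$ covering $\bar\Omega$ and local admissible data $u_{\epsilon,i}\in C(\bar U_i\cap\bar\Omega)$, which by (i) can be chosen plurisubharmonic in $U_i\cap\Omega$. After shrinking to $U_i'\subset\subset U_i$ so that $\{U_i'\}$ still covers $\bar\Omega$, define $u_\epsilon$ as the upper semicontinuous regularization of the pointwise maximum of the restrictions $u_{\epsilon,i}|_{U_i'}$, extended by $-\infty$ outside their domains of definition; equivalently one can realize $u_\epsilon$ as the Perron envelope of upper semicontinuous functions $\le\phi+\epsilon$ on $\bar\Omega$ that are viscosity subsolutions of $(dd^cu)^n\le e^{C_\epsilon}\nu$ with $C_\epsilon:=\max_i C_\epsilon^i$. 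Non-emptiness of the family is ensured by local admissibility, while the sandwich $\phi\le u_\epsilon\le\phi+\epsilon$ is inherited from each $u_{\epsilon,i}$, and the viscosity subsolution property propagates through maxima and Perron suprema. The mild technical point is continuity up to $\partial\Omega$ and across the artificial boundaries $\partial U_i'$, handled by the overlap of the refined cover.

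\emph{Part (iii).} The strategy is to replace $\phi$ by a maximal plurisubharmonic function in a neighborhood of $\{g=0\}$. Fix $\epsilon>0$. By uniform continuity of $\phi$ on $\bar\Omega$ and inner regularity of the pluripotential Monge--Amp\`ere measure together with the hypothesis $\int_{\{g=0\}}(dd^c\phi)^n_P=0$, I choose an open set $V\supset\{g=0\}$ with $\mathrm{osc}_V\phi<\epsilon$ and $(dd^c\phi)^n_P(V)$ arbitrarily small. On (each component of) $V$ solve the maximal Dirichlet problem $(dd^ch)^n_P=0$ in $V$ with $h|_{\partial V}=\phi|_{\partial V}$, obtaining $h\in PSH(V)\cap C(\bar V)$; the comparison principle and the oscillation bound give $\phi\le h\le\phi+\epsilon$ in $V$. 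Set $u_\epsilon:=h$ in $V$ and $u_\epsilon:=\phi$ in $\Omega\setminus V$; this is continuous across $\partial V$ since $h=\phi$ there, and lies in $[\phi,\phi+\epsilon]$. In $V$ the viscosity bound is immediate from $(dd^ch)^n_P=0$. Outside $V$, $g$ is bounded below by some $\delta_\epsilon>0$, and it suffices to establish $(dd^c\phi)^n_P\le e^{C_\epsilon}gdV$ on $\Omega\setminus V$; this pluripotential bound (and hence via \cite{EGZ} the viscosity bound) is arranged by choosing $C_\epsilon$ large. The main technical obstacle is to control the possible singular part of $(dd^c\phi)^n_P$ in $\Omega\setminus V$, which I intend to handle by first approximating $\phi$ from above by a decreasing sequence of continuous plurisubharmonic functions with bounded Monge--Amp\`ere density (using a Demailly-type regularization on the Stein domain $\Omega$ and exploiting the hypothesis to pass the vanishing property to the approximants), then running the construction above for each approximant, and finally taking a diagonal limit.
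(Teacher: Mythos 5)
Your part (i) is a legitimate alternative to the paper's route: the paper deduces (i) from the parabolic existence theory (Remark \ref{rem 2admissible} combined with Corollary \ref{cor exist admi}), whereas your plurisubharmonic envelope $P(u_\epsilon)$ works more directly --- note, though, that for the inequality $(dd^c\cdot)^n\leq e^{C_\epsilon}\nu$ the relevant test functions touch from \emph{below}, so on the coincidence set a lower test function for $P(u_\epsilon)$ is already a lower test function for $u_\epsilon$ and no sup-convolution is needed; your ``touching from above'' step is the wrong direction.

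Parts (ii) and (iii), however, contain genuine gaps. In (ii) you glue the local data by a pointwise \emph{maximum} (equivalently a Perron supremum). The condition $(dd^c u)^n\leq e^{C_\epsilon}\nu$ is the supersolution side of the equation, and it is stable under $\min$, not under $\max$: already in $\C$ the functions $\operatorname{Re}z$ and $-\operatorname{Re}z$ are harmonic, but $\max\{\operatorname{Re}z,-\operatorname{Re}z\}$ has Laplacian equal to a surface measure on $\{\operatorname{Re}z=0\}$, which is not dominated by $g\,dV$ for any bounded $g$. This is why the paper's Proposition \ref{admissible2} patches the local functions $\phi_{\epsilon,k}$ by a \emph{minimum}, adding the quadratic bumps $\epsilon|z-p_k|^2/r_k^2$ so that near $\partial B(p_k,2r_k)$ the $k$-th term drops out of the min and the glued function stays continuous; a maximum-based gluing cannot be repaired along these lines.

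In (iii) the decisive obstruction is at $\partial V$: if you solve $(dd^ch)^n_P=0$ in $V$ with $h=\phi$ on $\partial V$ and set $u_\epsilon=h$ in $V$, $u_\epsilon=\phi$ outside, then $h\geq\phi$ in $V$ with equality on $\partial V$ forces a jump of the normal derivative across $\partial V$, so $(dd^cu_\epsilon)^n_P$ acquires a singular part carried by $\partial V$ (in $\C$: $\phi=|z|^2$, $V=\{|z|<1/2\}$ gives $u_\epsilon=\max\{|z|^2,1/4\}$, whose Laplacian charges the circle $\{|z|=1/2\}$). That singular part cannot be bounded by $e^{C_\epsilon}g\,dV$. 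Moreover your preliminary choice of a single open $V\supset\{g=0\}$ with $\operatorname{osc}_V\phi<\epsilon$ is impossible in general, since $\{g=0\}$ may be a large set over which $\phi$ oscillates by $O(1)$. The paper's proof avoids both problems: after localizing to a ball $B$ and smoothing $\phi$ to $\phi_j$, it solves the global Dirichlet problem $(dd^c\psi_j)^n_P=\chi_{B\setminus U_j}(dd^c\phi_j)^n$ on all of $B$ (so the measure is truncated, not swept onto $\partial U_j$, and is dominated by $C_jg\,dV$ because $g$ is bounded below on $\overline{B}\setminus U_j$), and then invokes the stability result of Xing to get $\psi_j\to\phi$ uniformly. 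That Dirichlet-problem-plus-stability step is the key idea missing from your argument.
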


In particular, when $\mu$ is independent of time, we prove that this condition is also  necessary (Corollary \ref{cor exist admi}
and Remark \ref{rem 2admissible}). However, we  also give  a counter-example in which  $\mu$ depends on $t$, the  Cauchy-Dirichlet problem admits a solution but $(u_0,\mu(0,z))$ is not admissible. 
In addition, we  prove  the following  local and integral criteria to the admissible condition.  
\begin{Cor}
If $\nu=g(z)dV\geq 0$ with $\{z\in\Omega: g(z)=0\}$ is a  negligible  set, then 
$(\phi, \nu)$ is admissible for every $\phi\in C(\overline{\Omega})\cap PSH(\Omega)$.
\end{Cor}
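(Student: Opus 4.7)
The plan is to deduce this corollary directly from part (iii) of Theorem \ref{mainadmissible} by showing that the hypothesis ``$\{g=0\}$ is negligible'' forces the pluripotential Monge-Amp\`ere measure $(dd^c\phi)_P^n$ to put zero mass on $\{g=0\}$. Once that is established, the admissibility of $(\phi,\nu)$ follows immediately from (iii).

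The first step is to recall that in the pluripotential theory of Bedford--Taylor, a \emph{negligible} set is by definition a set of the form $\{v<v^*\}$, where $v=\sup_\alpha v_\alpha$ is the pointwise supremum of a family of plurisubharmonic functions (locally bounded from above) and $v^*$ is its upper semicontinuous regularization. The classical theorem of Bedford and Taylor \cite{BT2} asserts that every negligible set is pluripolar, i.e.\ contained in $\{w=-\infty\}$ for some $w\in PSH$ not identically $-\infty$.

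The second step is the non-charging property of the Monge-Amp\`ere measure on pluripolar sets. Since $\phi\in C(\overline{\Omega})\cap PSH(\Omega)$ is bounded, $(dd^c\phi)_P^n$ is a well-defined Radon measure on $\Omega$ in the sense of Bedford--Taylor, and it places no mass on pluripolar sets (this is again a standard Bedford--Taylor fact for bounded psh functions). Combining the two facts gives
\[
\int_{\{g=0\}}(dd^c\phi)_P^n=0.
\]

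Finally, an application of Theorem \ref{mainadmissible} (iii) to $\nu=g\,dV$ and $\phi$ yields that $(\phi,\nu)$ is admissible, which is the desired conclusion. The only conceptual point to keep track of is the meaning of ``negligible'': it must be understood in the pluripotential sense rather than merely Lebesgue-negligible, since a Lebesgue null set is generally not small enough to be avoided by $(dd^c\phi)_P^n$. There is no real obstacle here; the corollary is essentially the observation that the integral criterion in Theorem \ref{mainadmissible}(iii) is automatically satisfied whenever the vanishing locus of $g$ is pluripolar, and the Bedford--Taylor theorem converts ``negligible'' into ``pluripolar'' free of charge.
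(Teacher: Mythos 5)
Your proposal is correct and follows exactly the route the paper intends: the corollary is an immediate consequence of Theorem \ref{mainadmissible}(iii), since a negligible set is pluripolar by Bedford--Taylor and the Monge-Amp\`ere measure of a bounded psh function charges no pluripolar set, so $\int_{\{g=0\}}(dd^c\phi)_P^n=0$. Your remark that ``negligible'' must be read in the pluripotential (not Lebesgue) sense is also the right caveat.
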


For degenerate (elliptic)  complex Monge-Amp\`ere equations, the H\"older regularity of pluripotential solutions  has been studied 
 intensively (we refer to \cite{GKZ,DDGHKZ} and references therein). Similar results for viscosity solutions can be found in \cite{Lu, Wa12}. Nevertheless,  to the best of our knowledge, no H\"older regularity result to  the weak solutions of parabolic complex Monge-Amp\`ere equations has been established in both  pluripotential and viscosity senses (in the non-smooth case).
 In this paper, we make a first step in this direction:
\begin{The}\label{mainLip}
	Assume that $\mu=dV$ and $u(t, z)$ is a viscosity solution to \eqref{PMA}.
	Suppose that there exist $C>0$, $0<\alpha<1$ and $0< \beta< 1/2$ such that 
	\begin{center}
		$|\varphi (t, z)-\varphi (s, w)|\leq C(|t-s|^{\alpha}+|z-w|^{2\beta}), \, \forall z, w\in\partial\Omega, t,s\in [0, T)$,
	\end{center}
\begin{center}
		$\varphi (t, z)-\varphi (s, z)\leq C(t-s), \, \forall z\in\partial\Omega,
		 0<s<t<T$,
\end{center}
 and 
\begin{center}
	$|u_0(z)-u_0(w)|\leq C|z-w|^{\beta},\, \forall z, w\in\bar{\Omega}. $
\end{center}
Suppose also that, for any $K>0$, there exists $C_K>0$ such that, 
\begin{center}
	$|{F(t, z, r)}-F(t, w, r)|\leq C_K|z-w|^{\beta}$.
\end{center}
for all $z,w\in \Omega$, $t\in [0,T), r\in [-K, K]$.
 Then, there exists $\tilde{C}>0$ such that
\begin{center}
	$|u (t, z)-u (s, w)|\leq \tilde{C}(|t-s|^{\alpha}+|z-w|^{\beta})$,
\end{center}
for all $z, w\in\Omega, t,s\in [0, T)$.

\medskip
Moreover, if $\varphi$ is Lipschitz in $t$ then $u$ is locally Lipschitz in $t$ uniformly
in $z$.
\end{The}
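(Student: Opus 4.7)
The plan is to control the spatial and temporal oscillations of $u$ separately via the viscosity comparison principle established earlier in the paper, using translation and time-shift arguments together with preliminary boundary regularity of $u$. The preliminary step would be to show
\[
|u(t,z) - \varphi(t, p(z))| \leq C\, d(z,\partial\Omega)^{\beta} \text{ near } [0,T)\times\partial\Omega
\]
(where $p(z)$ denotes a nearest point in $\partial\Omega$) and $|u(t,z) - u_0(z)|\leq C\, t^\alpha$ near $\{0\}\times\bar\Omega$. The spatial estimate at the lateral boundary would follow from Perron-type barriers built from the strictly psh defining function $\rho$ of $\Omega$ (available by strong pseudoconvexity) together with the $2\beta$-Hölder modulus of $\varphi(t,\cdot)$. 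The temporal estimate at the initial slice would be obtained by taking, for the upper bound, the admissibility datum $u_\epsilon$ plus a linear-in-$t$ term with slope $\Lambda_\epsilon \sim C_\epsilon$ as a supersolution realizing $u_0$ at $t=0$ up to $\epsilon$, then optimizing $\epsilon = \epsilon(t)$ to balance the approximation error $\epsilon$ against $C_\epsilon\, t$; the lower bound would use $u_0$ itself combined with a linear correction as a subsolution.

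For the spatial Hölder estimate, I would fix $h \in \C^n$ of small norm, set $\Omega_h := \Omega \cap (\Omega - h)$, and consider
\[
w(t,z) := u(t, z+h) - A|h|^\beta - C_K|h|^\beta\, t.
\]
Since $dV$ is translation-invariant and $F(t,z+h,\cdot)-F(t,z,\cdot)$ differs by at most $C_K|h|^\beta$ on bounded sets, the shift $\partial_t w = \partial_t u(\cdot,\cdot+h) - C_K|h|^\beta$ exactly absorbs the $F$-translation defect in the viscosity subsolution inequality (using the monotonicity of $F$ in $r$), making $w$ a viscosity subsolution of the original equation on $(0,T)\times\Omega_h$. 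The inequality $w \leq u$ on the parabolic boundary would follow once $A$ is chosen large enough: the $\beta$-Hölder modulus of $u_0$ handles $\{0\}\times\Omega_h$, while the preliminary boundary regularity combined with the $2\beta$-Hölder modulus of $\varphi$ in $z$ handles all three sub-cases of the lateral boundary ($z, z+h$ both in $\partial\Omega$; only $z\in\partial\Omega$; only $z+h\in\partial\Omega$), where the margin $|h|^{2\beta}\leq |h|^\beta$ (for $|h|\leq 1$) is crucial. The comparison principle would then yield $u(t, z+h) - u(t,z) \leq (A + C_K T)|h|^\beta$, and exchanging $h \leftrightarrow -h$ gives the two-sided bound.

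For the temporal Hölder estimate, I would fix $0 < s < t < T$ and define $v(\tau, z) := u(\tau + (t-s), z) - C(t-s)^\alpha$ on $(0,s)\times\Omega$. By monotonicity of $F$ in $r$ and the time-autonomy of the equation, $v$ is again a viscosity subsolution. The preliminary boundary estimate gives $v \leq u$ on $\{0\} \times \Omega$, and the $\alpha$-Hölder-in-$t$ assumption on $\varphi$ gives $v \leq u$ on $(0,s)\times\partial\Omega$; comparison then yields $u(t,z) - u(s,z) \leq C(t-s)^\alpha$. The reverse inequality follows from the analogous time-shifted candidate $v(\tau,z) := u(\tau - (t-s), z) + C(t-s)^\alpha$ on $(t-s, T)\times\Omega$, invoking the one-sided Lipschitz bound $\varphi(t,z) - \varphi(s,z) \leq C(t-s) \leq C(t-s)^\alpha$ (valid for $t-s\leq 1$). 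Combining the spatial and temporal estimates by the triangle inequality would yield the global $(\alpha, \beta)$-Hölder modulus. The Lipschitz-in-$t$ refinement would follow from the same argument with $\alpha = 1$ under the two-sided Lipschitz assumption on $\varphi$, applied locally in $t$ bounded away from $t=0$.

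The main technical obstacle will be the preliminary boundary regularity at the initial slice $\{0\}\times\bar\Omega$: the admissibility datum $u_\epsilon$ provides a supersolution candidate only up to an approximation error $\epsilon$, while its time-growth rate $C_\epsilon$ typically diverges as $\epsilon\to 0$, so the optimization $\epsilon=\epsilon(t)$ balancing $\epsilon$ against $C_\epsilon\, t$ is delicate and requires a careful quantification of the admissibility condition in terms of the Hölder modulus of $\varphi$ in $t$. This is where the admissibility hypothesis genuinely enters the regularity result. The spatial barrier construction at the lateral boundary is more standard and relies on strong pseudoconvexity to supply a strictly psh defining function $\rho$ with $-\rho \asymp d(\cdot, \partial\Omega)$.
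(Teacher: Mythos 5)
Your overall architecture (boundary barriers, translation in $z$, shift in $t$) matches the paper's, and your pure-translation spatial step $w=u(t,z+h)-A|h|^\beta-C_K|h|^\beta t$ is a legitimate simplification of the paper's argument when $\mu=dV$ and $T<\infty$: the paper instead uses the interpolation $(1-|\tau|^\beta)u(t,z+\tau)+A_2|\tau|^\beta|z|^2-A_1|\tau|^\beta$ together with concavity of $\log\det$, but your linear-in-$t$ absorption of the $F$-translation defect achieves the same thing here. The decisive gap is the step you yourself flag as the main obstacle: the upper bound $u(t,z)\le u_0(z)+\tilde C t^\alpha$ at the initial slice. Deriving it from the admissibility datum $u_\epsilon$ by optimizing $\epsilon=\epsilon(t)$ against $C_\epsilon t$ cannot work, because admissibility gives \emph{no} quantitative relation between $\epsilon$ and $C_\epsilon$; the constant $C_\epsilon$ may blow up arbitrarily fast, so $\min_\epsilon(\epsilon+C_\epsilon t)$ need not be $O(t^\alpha)$ (or any power of $t$). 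Admissibility is in fact automatic and quantitatively irrelevant here since $\mu=dV>0$. The correct mechanism (Proposition \ref{holder_t_2}) uses the hypothesis that $u_0$ is $\beta$-H\"older: extend $u_0$ to $\C^n$ preserving the modulus, mollify at scale $\delta^{\alpha/\beta}$ so that $(dd^c u_{\delta^{\alpha/\beta},0})^n_+\le C_2\delta^{\alpha(n\beta-2n)/\beta}\,dV$, and add the ramp $\max\{C,C_3\}t^\alpha$, whose time derivative $\sim t^{\alpha-1}$ blows up as $t\to0^+$ and makes $e^{\partial_t(\cdot)}$ dominate the polynomial degeneration of the mollified Monge--Amp\`ere mass on $(0,\delta)$. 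This is exactly where $0<\alpha<1$ and $\mu>0$ enter, and no version of this survives with only the qualitative admissibility datum.

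Two further steps would also fail as written. First, your time-shift $v(\tau,z)=u(\tau+(t-s),z)-C(t-s)^\alpha$ is a subsolution of the \emph{time-shifted} equation; to compare it with $u$ you need $F(\tau+(t-s),z,r)\ge F(\tau,z,r)$, and the theorem assumes no monotonicity or modulus of continuity of $F$ in $t$ (absorbing the defect by a $-\omega(t-s)\tau$ term only yields an error $T\omega(t-s)$, which is not $O(|t-s|^\alpha)$). The paper avoids this by rerunning the barrier construction of Proposition \ref{holder_t_2} with $u_0$ replaced by $u(s,\cdot)$ (which is $\beta$-H\"older by the spatial estimate), and by proving the lower bound (Proposition \ref{holder_t_1}) with a barrier anchored at the fixed time $s_0$, namely $u(s_0,z)+\delta^\alpha\rho-\max\{C,C_2\}(t-s_0)^\alpha$, rather than by shifting. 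Second, the Lipschitz refinement is not ``the same argument with $\alpha=1$'': the constants in both temporal barriers contain $\sup_{(0,1)}(-r^{1-\alpha}\log r)$-type terms that diverge as $\alpha\to1$, precisely because a linear ramp has bounded slope and cannot offset the $n\log\delta$ degeneration of $(dd^c(\delta\rho))^n$. The paper instead invokes the rescaling comparison $u_A=\frac1A u(At,z)$ of Propositions \ref{rgltionint1.prop} and \ref{rgltionint2.prop}, which yields $|\partial_t u|\le 2M/t+\cdots$ locally. Finally, your lateral boundary barrier should be made precise along the lines of Proposition \ref{holder_z}: the sub-barrier is $v-K(-\rho)^\beta$ with $v$ solving an auxiliary \emph{elliptic} Monge--Amp\`ere problem with data $\varphi(t,\cdot)$, and it is only a parabolic subsolution because $\partial_t v\le C_2$, which is where the one-sided Lipschitz hypothesis $\varphi(t,z)-\varphi(s,z)\le C(t-s)$ is actually consumed.
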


Finally, we prove that the   viscosity solution of the Cauchy-Dirichlet problem \eqref{PMA} asymptotically recovers the solution of the corresponding elliptic Dirichlet problem under some assumptions. This also extends the convergence result in \cite{EGZ15b}. 
\begin{The}\label{mainconvergence}
	Assume that $T=\infty$, 
	$\varphi (t, z)\rightrightarrows\varphi_{\infty} (z)$
	as $t\rightarrow\infty$ and  
	$F( t, z, r)\rightrightarrows F_{\infty}(z, r)$ in $\bar{\Omega}\times\R$
	as $t\rightarrow\infty$, where $\rightrightarrows $ denotes the uniform convergence. 
	
	Suppose that $\sup_{t\geq 0}f(t, z)\in L^{p}(\Omega)$  for some $p>1$ and $f (t, z)$ converges almost everywhere
	to a function $f_{\infty}(z)$  as $t\rightarrow\infty$. Then 
	$u(t, z)$ converges uniformly to $u_{\infty}(z)$
	as $t\rightarrow\infty$, where $u_{\infty}$ is the  unique solution of the equation
	\begin{equation}\label{MAinfty}
	\begin{cases}
	u_{\infty}\in PSH(\Omega)\cap C(\overline{\Omega}),\\
	(dd^cu_{\infty})_P^n=e^{F_{\infty}(z, u_{\infty})} f_\infty(z)dV (z)\qquad\mbox{in}\qquad\Omega,\\
	u_{\infty}=\varphi_{\infty} \qquad\mbox{in}\qquad\partial\Omega.
	\end{cases}
	\end{equation}
\end{The}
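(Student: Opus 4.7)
The plan is to combine the half-relaxed limits technique with Kolodziej's $L^p$-stability for the elliptic complex Monge--Amp\`ere operator, thereby extending the convergence proof of \cite{EGZ15b} (valid for time-independent data) to the present quasi-stationary setting, where $\varphi, F, f$ converge to their limits only as $t \to \infty$ and $f$ merely almost everywhere.

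First I would verify that the limiting elliptic problem \eqref{MAinfty} is well posed. The bound $f_\infty \leq \sup_{t \geq 0} f(t,\cdot) \in L^p(\Omega)$ (via Fatou) together with Kolodziej's theorem and the monotonicity of $F_\infty$ in $r$ produces a unique $u_\infty \in PSH(\Omega) \cap C(\bar\Omega)$. Next I would obtain a uniform a priori estimate $\|u(t,\cdot)\|_{L^\infty(\bar\Omega)} \leq M$ for all $t \geq 0$, using time-independent barriers (an appropriately rescaled strictly psh defining function of $\Omega$ from below, and a sufficiently large constant from above) together with the parabolic comparison principle already established in the paper.

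The heart of the proof will be to sandwich $u$ between parabolic super- and subsolutions that tend to $u_\infty$ as $t \to \infty$. Given $\epsilon > 0$, I would first pick $T_\epsilon$ large enough that $\|\varphi(t,\cdot) - \varphi_\infty\|_{L^\infty(\partial\Omega)} \leq \epsilon$ and $\sup_{|r|\leq M}|F(t,z,r) - F_\infty(z,r)| \leq \epsilon$ for $t \geq T_\epsilon$, and approximate $f_\infty$ from above by a continuous density $\hat f_\epsilon \in L^p(\Omega)$ with $\|\hat f_\epsilon - f_\infty\|_{L^p} \leq \epsilon$. Solving the perturbed elliptic Dirichlet problem (boundary $\varphi_\infty + \epsilon$, source $F_\infty - \epsilon$, density $\hat f_\epsilon$) produces a continuous psh function $w_\epsilon$ uniformly close to $u_\infty$ by Kolodziej stability. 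The candidate supersolution is then $v_\epsilon^+(t,z) = w_\epsilon(z) + A\epsilon$, with $A$ chosen large enough to dominate $u(T_\epsilon,\cdot)$ on $\bar\Omega$, to restore the boundary ordering, and to provide the slack needed for the parabolic supersolution inequality, which must absorb the discrepancy between $f(t,\cdot)$ and $\hat f_\epsilon$ through a Kolodziej-type comparison. A symmetric construction supplies a subsolution $v_\epsilon^-$. Applying the parabolic comparison principle on $[T_\epsilon,\infty) \times \Omega$ will sandwich $u$ between the two barriers; letting first $t \to \infty$ and then $\epsilon \to 0$ delivers the desired uniform convergence.

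The hard part will be the barrier construction itself. Because $f$ converges to $f_\infty$ only almost everywhere and the limit density is merely $L^p$, a direct pointwise viscosity comparison of densities is unavailable, and the parabolic supersolution inequality for $v_\epsilon^+$ cannot be checked by a naive pointwise argument. Kolodziej's $L^p$-stability estimate is the technical input that will substitute for pointwise control, ensuring both that the perturbed elliptic barriers $w_\epsilon$ stay uniformly close to $u_\infty$ and that the mismatch between $f(t,\cdot)$ and $\hat f_\epsilon$ in the parabolic inequality translates only into an $L^\infty$-error of order $\epsilon$.
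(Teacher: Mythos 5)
Your overall architecture (sandwich $u$ between barriers built from perturbed elliptic problems, then let $t\to\infty$ and $\epsilon\to 0$) points in the right direction, but two steps as described would fail, and both are exactly the points where the paper has to work hardest.

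First, your supersolution $v_\epsilon^+(t,z)=w_\epsilon(z)+A\epsilon$ is static, and you require $A\epsilon$ to dominate $u(T_\epsilon,\cdot)-w_\epsilon$ on $\bar\Omega$. A priori $u(T_\epsilon,\cdot)$ is only known to be bounded, not close to $u_\infty$, so this forces $A\epsilon=O(1)$; the comparison principle then only reproduces the uniform bound $u\le w_\epsilon+O(1)$, and ``letting $t\to\infty$'' gives nothing because the barrier does not depend on $t$. You cannot take $A\epsilon\to 0$ without already knowing $u(T_\epsilon,\cdot)\to u_\infty$, which is the conclusion. The decay in time has to come from somewhere, and the paper gets it by introducing auxiliary \emph{parabolic} solutions $u_T(t,z)$ of a time-independent problem started at $u(T,\cdot)$ (with density $\sup_{s\in[T,T+1]}f(s,\cdot)\,dV$, so that admissibility follows from Lemma \ref{rglzingint.lem}), and invoking the convergence theorem of \cite{EGZ15b} for time-independent data to send $u_T(t,\cdot)$ to an elliptic solution as $t\to\infty$. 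A static barrier cannot replace this step.

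Second, you propose to use Kolodziej/GKZ $L^p$-stability to ``substitute for pointwise control'' in verifying the parabolic supersolution inequality for $v_\epsilon^+$ against the density $f(t,\cdot)$. That estimate compares two elliptic \emph{solutions} in $L^\infty$; it does not upgrade a function that satisfies the supersolution inequality only up to an $L^p$-small density error into a genuine viscosity supersolution, and the comparison principle (Theorem \ref{compa.the}) accepts nothing less. The paper's device is Lemma \ref{lem modify}: one corrects by a psh function $\phi$ solving $(dd^c\phi)_P^n=e^{C_1+C_2}\bigl|(1+\epsilon^{n+1})\sup_{s\ge T_2}f-(1-\epsilon^{n+1})\inf_{s\ge T_2}f\bigr|dV$, so that $u_{T_2}+\phi$ and $u_{T_2}(\cdot+\delta,\cdot)-\phi+2\epsilon$ are exact sub/supersolutions for the perturbed densities; this requires a uniform bound on $\partial_t u_{T_2}$ (Corollary \ref{rgltionint.cor}) and on $F$. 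The price is that $\phi$ is not small pointwise by construction, only its Monge--Amp\`ere mass is small; the final estimate is $|u-u_\infty|\le -2\phi+3\epsilon$, and it is only at this last stage that Theorem \ref{the GKZ08} (the $L^p$ stability you invoke) is used, namely to bound $\sup_\Omega|\phi|$ when $\sup_{t}f\in L^p$, $p>1$ (for $p=1$ one only gets smallness of $\mathrm{Cap}(\{\phi<-\epsilon\})$ and hence convergence in capacity, as in Theorem \ref{mainconvergence1}). You have the right tools in hand but assign the $L^p$-stability estimate a role it cannot play.
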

The solution  $u_\infty$ to the  elliptic Dirichlet problem above is well known to exist in the pluripotential sense in \cite{K98}.
If $f_{\infty}$ is continuous then the solution in the pluripotential sense is also
the solution  in the viscosity sense \cite{EGZ,HL,Wa12}.

In fact, we can also obtain the uniform convergence in capacity when $p=1$ as well, we refer to Theorem \ref{mainconvergence1}. In this case,
the equation \eqref{MAinfty} is replaced by the equation \eqref{MAinfty_1}. The existence of the pluripotential solution to 
\eqref{MAinfty_1} holds due to 
 \cite{Ceg04}, \cite{Aha07}, \cite{ACCP09} (see also \ref{the MA Cegrell}).

\medskip
\noindent\textbf{Acknowledgement.} The authors are grateful to  Vincent Guedj and
 Ahmed Zeriahi for very useful discussions.  This work was begun during a visit by the first-named author to the Institut de Math\'ematiques de Toulouse (from September 1 to September 30, 2018) funded by LIA Formath Vietnam and ANR GRACK. This paper was partially written while the first-named author
 visited Vietnam Institute for Advanced Study in Mathematics(VIASM). He would like to thank  
 these institutions for their hospitality.  The authors would like to thank the referee for very useful comments and suggestions.
\section{Preliminaries}
For the reader's convenience, we recall some basic concepts and well-known results.
\subsection{Viscosity concepts}
Consider the following parabolic complex  Monge-Amp\`ere equations on a bounded domain $\Omega\subset \C^n$
\begin{equation}\label{PMAfree}
e^{\dt u+F(t, z, u)}\mu(t, z)=\MAu,
\end{equation}
where
\begin{itemize}
	\item  $\Omega_T=(0, T)\times\Omega$.
	\item $F(t, z, r)$ is continuous in $[0, T]\times\bar{\Omega}\times\R$ and non-decreasing in $r$.
	\item $\mu(t, z)=f(t, z)dV$,
	 where $dV$ is the standard volume form in $\C^n$ and $f\geq 0$ is a bounded  continuous function in $[0, T]\times\Omega$.
\end{itemize}
\begin{Def} (Test functions)
	Let  $ w : \Omega_T \longrightarrow \R$ be any function defined in $\Omega_T$ and $(t_0,z_0) \in \Omega_T$ a given point. 
	An upper test function (resp. a lower test function) for $w$ at the point $(t_0,z_0)$ 
	is  a $C^{(1,2)}$-smooth function $q$ (i.e., $q$ is $C^1$ in $t$ and $C^2$ in $z$)
	in a neighbourhood of the point $(t_0,z_0)$ such that $  w (t_0,z_0) = q (t_0,z_0)$ and $w \leq q$ (resp. $w \geq q$) in a neighbourhood of $(t_0,z_0)$.
	We will write for short $w \leq_{(t_0,z_0)} q$ (resp.  $w \geq_{(t_0,z_0)} q$).   
\end{Def}

\begin{Def}
	1. A  function  $ u : [0,T) \times \overline{\Omega} \longrightarrow \R$ is said to be a (viscosity) subsolution to the parabolic complex Monge-Amp\`ere equation \eqref{PMAfree} in $(0,T) \times \Omega$ if $u$ is upper semi-continuous in $[0,T) \times \overline{\Omega}$ and for any point $ (t_0,z_0) \in \Omega_T := (0,T) \times \Omega$ and any upper test function $q$ for $u$ at $(t_0,z_0)$, we have
	$$
	(dd^c q_{t_0} (z_0))^n  \geq e^{\partial_t q (t_0,z_0) + F (t_0,z_0,q (t_0,z_0))} \mu (t_0, z_0).
	$$
	In this case, we also say that $u$ satisfies the differential inequality 
	$$(dd^c u)^n \geq e^{\partial_t u (t,z) + F (t,z,u (t,z))} \mu(t, z),$$ in the viscosity sense in $\Omega_T.$
	
	The function $u$ is called  a subsolution to  the Cauchy-Dirichlet problem \eqref{PMA} if $u$ is a subsolution to \eqref{PMAfree}
	satisfying $u\leq\varphi$ in $[0, T)\times\partial\Omega$ and
	$u(0, z)\leq u_0(z)$ for all $z\in\Omega$.
	
	\medskip
	2. A  function  $ v : [0,T) \times \overline{\Omega} \longrightarrow \R$ is said to be a (viscosity) 
	supersolution to the parabolic complex Monge-Amp\`ere equation \eqref{PMAfree}
	in $\Omega_T$ if $v$ is lower semi-continuous in $\Omega_T$ and for any point 
	$ (t_0,z_0) \in \Omega_T$ and any lower test  function $q$ for $v$ at $(t_0,z_0)$ such that $dd^c q_{t_0}(z_0) \geq 0$, we have 
	$$
	(dd^c q_{t_0})^n (z_0) \leq e^{\partial_{t} q (t_0,z_0) + F (t_0,z_0,q (t_0,z_0))} \mu (t_0, z_0).
	$$
	In this case we also say that $v$ satisfies the differential inequality 
	$$(dd^c v)^n \leq e^{\partial_t v (t,z) + F (t,z,v(t,z))} \mu(t, z),$$
	 in the viscosity sense in $\Omega_T$.
	
	The function $v$ is called  a supersolution to \eqref{PMA} if $v$ is a supersolution to \eqref{PMAfree}
	satisfying $v\geq\varphi$ in $[0, T)\times\partial\Omega$ and
	$v(0, z)\geq u_0(z)$ for all $z\in\Omega$.
	
	\medskip
	3. A function $ u: \Omega_T\longrightarrow \R$ is said to be a (viscosity) solution to the parabolic complex Monge-Amp\`ere equation \eqref{PMAfree}
	(respectively, \eqref{PMA}) in $\Omega_T$ if it is a subsolution and a supersolution to
 \eqref{PMAfree} (respectively, \eqref{PMA}) in $\Omega_T$. 
\end{Def}
\begin{Def}
	 A discontinuous viscosity solution to the equation \eqref{PMAfree} (resp.
	 \eqref{PMA}) is a function $u:\Omega_T \to [+\infty, -\infty]$ such that
	
	i) the usc envelope $u^*$ of $u$ satisfies $\forall z\in \Omega, \ u^*(t,z)<+\infty$ and is a viscosity subsolution to \eqref{PMAfree} (resp. \eqref{PMA}),
	
	ii) the lsc envelope $u_*$ of $u$ satisfies $\forall z\in \Omega, \ u_*(t,z)>-\infty$ and is a viscosity supersolution to the equation \eqref{PMAfree} (resp. \eqref{PMA}).
\end{Def}
\subsection{Basic properties}
We recall some basic properties of viscosity subsolution and viscosity supersolution.
\begin{Lem}\label{lem.uplowesti}
Consider the equations
\begin{equation}\label{PMAfree1}
e^{\dt u+F_1(t, z, u)}\mu_1(t, z)=\MAu\qquad\mbox{ in }\qquad (0,T)\times\Omega,
\end{equation}
and
\begin{equation}\label{PMAfree2}
e^{\dt u+F_2(t, z, u)}\mu_2(t, z)=\MAu\qquad\mbox{ in }\qquad (0,T)\times\Omega,
\end{equation}
where, for $j=1, 2$,
\begin{itemize}
	\item $F_j(t, z, r)$ is continuous in
	 $[0, T)\times\bar{\Omega}\times\R$ and non-decreasing in $r$.
	\item $\mu_j(t, z)=f_j(t, z)dV$
	with  $f_j$ is a bounded continuous function in $[0, T)\times\Omega$.
\end{itemize}
Assume that $F_1\geq F_2$ and $\mu_1\geq\mu_2$. If $u_1$ is a subsolution to
\eqref{PMAfree1} then $u_1$ is also a subsolution to \eqref{PMAfree2}.
Conversely, if $u_2$ is a supersolution to \eqref{PMAfree2} then 
 $u_2$ is also a supersolution to \eqref{PMAfree1}.
\end{Lem}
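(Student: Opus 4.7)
The lemma asserts a standard monotonicity-in-data property for viscosity sub- and supersolutions of the parabolic complex Monge-Amp\`ere equation, and the proof should be essentially immediate from the definitions. My plan is to unwind both assertions directly through the test-function formulation, using only that the right-hand side of \eqref{PMAfree} is monotone nondecreasing in $F$ and in the density $f$.

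For the subsolution claim, I would fix an arbitrary point $(t_0,z_0)\in\Omega_T$ and an arbitrary upper test function $q$ for $u_1$ at $(t_0,z_0)$. Since $u_1$ is a subsolution to \eqref{PMAfree1}, the defining inequality
$$
(dd^c q_{t_0}(z_0))^n \;\geq\; e^{\partial_t q(t_0,z_0)+F_1(t_0,z_0,q(t_0,z_0))}\mu_1(t_0,z_0)
$$
holds. Using $F_1\geq F_2$ (so $e^{F_1}\geq e^{F_2}$), the pointwise inequality $f_1\geq f_2\geq 0$ for the densities, and the fact that $e^{\partial_t q(t_0,z_0)}>0$, the right-hand side above dominates $e^{\partial_t q(t_0,z_0)+F_2(t_0,z_0,q(t_0,z_0))}\mu_2(t_0,z_0)$. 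Chaining these gives precisely the subsolution inequality for \eqref{PMAfree2}, which is what we need.

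For the supersolution claim, I would proceed symmetrically: fix $(t_0,z_0)\in\Omega_T$ and a lower test function $q$ for $u_2$ at that point satisfying $dd^c q_{t_0}(z_0)\geq 0$. The supersolution hypothesis yields
$$
(dd^c q_{t_0}(z_0))^n \;\leq\; e^{\partial_t q(t_0,z_0)+F_2(t_0,z_0,q(t_0,z_0))}\mu_2(t_0,z_0),
$$
and the same two monotonicities $F_1\geq F_2$, $f_1\geq f_2\geq 0$ now show that the right-hand side is bounded above by $e^{\partial_t q(t_0,z_0)+F_1(t_0,z_0,q(t_0,z_0))}\mu_1(t_0,z_0)$, giving the supersolution property for \eqref{PMAfree1}.

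There is really no obstacle in this argument; the only point worth flagging is that the inequality $\mu_1\geq\mu_2$ must be interpreted pointwise on the densities $f_j$ (which both lie in $C([0,T)\times\Omega)$ and are nonnegative), and that the continuity and monotonicity assumptions on the $F_j$ are needed only to ensure the problems \eqref{PMAfree1}, \eqref{PMAfree2} are admissible within the viscosity framework already set up in the preliminaries. The whole proof is therefore two lines of chained inequalities, one for each half of the statement.
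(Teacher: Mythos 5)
Your proof is correct and is exactly the routine unwinding of the viscosity definitions that the paper has in mind; indeed, the paper states this lemma without proof precisely because the argument is this two-line chain of inequalities. Nothing is missing.
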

\begin{Lem}\label{lem.changevar}
Let $A>0$. If $u(t, z)$ is a subsolution (resp. supersolution) to
\eqref{PMAfree} in $(0, T)\times\Omega$ then 
$u_A:=\dfrac{1}{A}u(At, z)$ is a subsolution (resp. supersolution) to the equation
\begin{equation}
\dfrac{1}{A^n}e^{\dt u_A+F(At, z, Au_A)}\mu(At, z)=(dd^c u_A)^n,
\end{equation}
in $(0, \dfrac{T}{A})\times\Omega$.
\end{Lem}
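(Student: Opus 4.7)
The plan is to carry out a direct change-of-variables argument at the level of test functions, since the definition of viscosity sub/supersolution is formulated purely through them.

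First, I would fix a point $(t_0,z_0)\in (0,T/A)\times\Omega$ and let $q$ be an upper test function for $u_A$ at $(t_0,z_0)$; so $u_A\leq q$ near $(t_0,z_0)$ with equality at $(t_0,z_0)$. The key observation is that if I set $s=At$ and define $\tilde q(s,z):=A\,q(s/A,z)$, then the inequality $u_A(t,z)\leq q(t,z)$ rewrites as $u(s,z)\leq \tilde q(s,z)$ in a neighbourhood of $(At_0,z_0)\in (0,T)\times\Omega$, with equality at $(At_0,z_0)$. Thus $\tilde q$ is a $C^{(1,2)}$ upper test function for $u$ at the rescaled point, and the subsolution inequality for $u$ applies.

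Next, I would compute how the differential operators transform. Since $\tilde q(s,z)=A\,q(s/A,z)$, the chain rule in $s$ gives $\partial_s \tilde q(s,z)=\partial_t q(s/A,z)$, while the $dd^c$ operator (in the space variables at fixed time) scales linearly, so $dd^c\tilde q_s(z)=A\,dd^c q_{s/A}(z)$, and hence $(dd^c\tilde q_s(z))^n=A^n(dd^c q_{s/A}(z))^n$. Evaluating at $s=At_0$ and using $\tilde q(At_0,z_0)=A\,q(t_0,z_0)=A\,u_A(t_0,z_0)$, the subsolution inequality for $u$ becomes
\begin{equation*}
A^n (dd^c q_{t_0}(z_0))^n \;\geq\; e^{\partial_t q(t_0,z_0)+F(At_0,z_0,A q(t_0,z_0))}\,\mu(At_0,z_0),
\end{equation*}
which, after dividing by $A^n$, is exactly the subsolution inequality for $u_A$ relative to the rescaled equation.

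For the supersolution direction the argument is symmetric: one takes a lower test function $q$ for $u_A$ with the extra requirement $dd^c q_{t_0}(z_0)\geq 0$, defines $\tilde q$ the same way, and notes that $dd^c\tilde q_{At_0}(z_0)=A\,dd^c q_{t_0}(z_0)\geq 0$, so $\tilde q$ is admissible as a lower test function for $u$ and gives the reverse inequality, which then rescales back. I do not anticipate a real obstacle here; the only point that needs a little care is to keep the two time variables $t$ and $s=At$ straight and to verify the chain-rule factors of $A$ and $1/A$ combine correctly into the claimed $A^{-n}$ prefactor on the right-hand side.
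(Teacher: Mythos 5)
Your proof is correct: the bijection $q\mapsto\tilde q(s,z)=Aq(s/A,z)$ between test functions, the chain-rule identity $\partial_s\tilde q=\partial_t q(s/A,\cdot)$, the scaling $(dd^c\tilde q_s)^n=A^n(dd^c q_{s/A})^n$, and the preservation of the positivity constraint $dd^c q_{t_0}(z_0)\geq 0$ in the supersolution case all check out, and dividing by $A^n$ yields exactly the stated rescaled equation. The paper states this lemma without proof, and your direct change-of-variables argument at the level of test functions is precisely the standard argument it implicitly relies on.
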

 
\begin{Lem}\label{lem.seq}\cite{CIL92, IS13, EGZ15b}
Let $\mu^j (t,x) \geq 0$ be a sequence of continuous volume forms converging uniformly to a volume form $\mu$ on $\Omega_T$ and let $F^j$ be a sequence of continuous functions in $[0,T[ \times \Omega \times \R$ converging locally uniformly to a function $F$.
Let $(u^j)$ be a   locally uniformly bounded sequence of real valued functions defined in $\Omega_T$. 

1. Assume that for every $j \in \N$, $u^j$ is a viscosity subsolution to the complex Monge-Amp\` ere flow
$$
e^{ \partial_t u^j + F^j (t,z,u^j)} \mu^j (t,z) - (dd^c u^j_t)^n = 0,
$$
associated to $(F^j,\mu^j)$ in $\Omega_T$.
Then  its upper relaxed semi-limit 
$$
\overline u= \limsup {}^*_{j \to + \infty} u^j
$$
of the sequence $(u^j)$ is a subsolution to the parabolic Monge-Amp\` ere equation
$$
e^{ \partial_t u + F (t, z, u)} \mu - (dd^c u)^n = 0, 
$$
in $\Omega_T$. 

2. Assume that for every $j \in \N$, $u^j$ is a viscosity supersolution to the complex Monge-Amp\`ere flow associated to $(F^j,\mu^j)$ in $\Omega_T$. Then the lower relaxed semi-limit 
$$
\underline u = {\liminf}{}_{* j \to + \infty} u^j
$$ 
of the sequence  $(u^j)$ is a supersolution to the complex Monge-Amp\`ere flow associated to $(F,\mu)$ in $\Omega_T$.
	
\end{Lem}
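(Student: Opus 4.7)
The plan is to apply the classical Barles--Perthame half-relaxed-limits stability result, adapted to the parabolic complex Monge-Amp\`ere setting exactly as in \cite{CIL92,IS13,EGZ15b}. Since the nonlinearity $H^j(t,z,r,\tau,A)=e^{\tau+F^j(t,z,r)}f^j(t,z)-\det(A)$ (restricted to the cone of nonnegative Hermitian matrices on the supersolution side) is continuous in all arguments and converges locally uniformly to the limiting $H$ under the hypotheses, the entire argument reduces to showing that local extrema of $u^j-q$ accumulate at a preassigned strict local extremum of $\bar u-q$ (resp.\ $\underline u-q$).

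For the subsolution part, fix $(t_0,z_0)\in\Omega_T$ and an upper test function $q$ for $\bar u$ at $(t_0,z_0)$. Replacing $q$ by
$$\tilde q(t,z)=q(t,z)+\delta(|t-t_0|^2+|z-z_0|^2)$$
for a small $\delta>0$, we may assume that $\bar u-\tilde q$ attains a strict local maximum (equal to $0$) at $(t_0,z_0)$. A standard extraction lemma in viscosity theory (see \cite[Lemma~6.1]{CIL92}) then yields a sequence $(t_j,z_j)\to(t_0,z_0)$ at which $u^j-\tilde q$ attains a local maximum in a fixed neighbourhood, with moreover $u^j(t_j,z_j)\to\bar u(t_0,z_0)=\tilde q(t_0,z_0)$. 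Since $\tilde q$ is thus an upper test function for $u^j$ at $(t_j,z_j)$, the subsolution inequality for $u^j$ reads
$$(dd^c\tilde q_{t_j}(z_j))^n\geq e^{\partial_t\tilde q(t_j,z_j)+F^j(t_j,z_j,u^j(t_j,z_j))}\mu^j(t_j,z_j).$$
Using continuity of $\partial_t q$ and $dd^cq$, the locally uniform convergence $F^j\to F$, the uniform convergence $\mu^j\to\mu$, and $u^j(t_j,z_j)\to\bar u(t_0,z_0)=q(t_0,z_0)$, we pass to the limit $j\to\infty$ and then let $\delta\to 0^+$ to deduce the desired subsolution inequality for $q$ at $(t_0,z_0)$.

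The supersolution part is symmetric: given a lower test function $q$ for $\underline u$ at $(t_0,z_0)$ with $dd^cq_{t_0}(z_0)\geq 0$, we perturb to make the minimum of $\underline u-q$ strict and extract $(t_j,z_j)\to(t_0,z_0)$ of local minima of $u^j-q$ with $u^j(t_j,z_j)\to\underline u(t_0,z_0)$. The supersolution test for $u^j$ requires $dd^cq_{t_j}(z_j)\geq 0$, which a priori may fail for finite $j$. To handle this, we additionally replace $q$ by $q+\eta|z|^2$ with small $\eta>0$: then $dd^cq_{t_j}(z_j)\geq\eta\,\mathrm{Id}>0$ for $j$ large, the supersolution inequality
$$(dd^cq_{t_j}(z_j))^n\leq e^{\partial_tq(t_j,z_j)+F^j(t_j,z_j,u^j(t_j,z_j))}\mu^j(t_j,z_j)$$
applies, and we pass to the limit in $j$ and then in $\eta\to 0^+$, $\delta\to 0^+$.

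The main obstacle is the extraction of the sequences $(t_j,z_j)$ together with the convergence of $u^j(t_j,z_j)$ to the prescribed value $\bar u(t_0,z_0)$ or $\underline u(t_0,z_0)$; this is precisely the content of the half-relaxed-limits lemma, whose proof rests on the strictness of the extremum and local uniform boundedness of $(u^j)$. A secondary but genuine technicality is the degeneracy constraint $dd^cq\geq 0$ in the supersolution definition, which is the reason we introduce the auxiliary perturbation $\eta|z|^2$; once both perturbations are in place, all remaining manipulations are continuous substitutions in the operator.
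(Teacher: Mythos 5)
The paper does not prove this lemma (it is quoted from \cite{CIL92, IS13, EGZ15b}), so there is no in-paper argument to compare with; your overall strategy --- the Barles--Perthame half-relaxed-limit argument, a strict perturbation of the test function, and the extraction lemma of \cite[Lemma 6.1]{CIL92} --- is exactly the standard one, and part 1 is correct as written.

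Part 2, however, has a genuine flaw in the treatment of the constraint $dd^cq_{t_0}(z_0)\geq 0$. A lower test function may only be perturbed \emph{downwards}: if $(t_j,z_j)$ is a local minimum of $u^j-q$, it is in general no longer a local minimum of $u^j-q-\eta|z|^2$, because you have added a nonnegative strictly psh function to $q$; equivalently, $q+\eta|z|^2$ (however you normalize the constant) no longer lies below $u^j$ near $(t_j,z_j)$, nor below $\underline u$ near $(t_0,z_0)$. Hence the supersolution inequality for $u^j$ cannot be invoked for the modified function, and the step ``the supersolution inequality applies'' fails. Note also that the strictness perturbation must itself go downwards, $q\mapsto q-\delta(|t-t_0|^2+|z-z_0|^2)$, which makes the complex Hessian \emph{more} negative --- so the difficulty you identified is real, but your fix moves in the forbidden direction. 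The standard repair is either a case distinction: if $dd^cq_{t_0}(z_0)\geq 0$ is degenerate then $(dd^cq_{t_0}(z_0))^n=0$ and the desired inequality is trivial since the right-hand side is nonnegative; if $dd^cq_{t_0}(z_0)>0$ then by continuity of $D^2q$ one has $dd^c\bigl(q-\delta|z-z_0|^2\bigr)_{t_j}(z_j)>0$ for $\delta$ small and $j$ large, so no upward perturbation is needed; or, equivalently, one reformulates the supersolution property using the continuous degenerate-elliptic extension $\det_+(A)=\prod_i\lambda_i^+(A)$ of the determinant (for which the positivity constraint on test functions becomes vacuous), after which the stability argument of part 1 applies verbatim. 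With either repair the remaining limiting manipulations in your proposal go through.
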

One of applications of Lemma \ref{lem.seq} is the following
\begin{Lem}\label{lem modify}
Let $u$  be a subsolution  to the equation
\begin{equation}\label{eq1}
	e^{\dt w+ F_1(t, z, w)}f_1(t, z)dV=(dd^c w)^n,
\end{equation}
and $v$ be  a supersolution to the equation
\begin{equation}\label{eq2}
e^{\dt w+ F_2(t, z, w)}f_2(t, z)dV=(dd^c w)^n,
\end{equation}
in $(0, T)\times\Omega$.
Let $p$ be a negative plurisubharmonic function in $\Omega$ and 
$h: (0, T)\rightarrow	[0, \infty)$ be a continuous non-decreasing function. 
Then
$\tilde{u}(t, z)=u(t, z)+p(z)-h(t)$  is a subsolution to \eqref{eq1} and  
$\tilde{v}(t, z)=v(t, z)-p(z)+h(t)$
is a supersolution to \eqref{eq2}. Moreover, if $p\in\E (\Omega)$ (see Section \ref{subsec Cegrell} for the definition of the class $\E$) 
and there exist $C_1, C_2>0$ such that
\begin{equation}\label{eq3}
	\dt u, \dt v\leq C_1,
\end{equation}
in the viscosity sense and 
\begin{equation}\label{eq4}
	 \sup F(. , . , \sup v), \sup F(., ., \sup u)\leq C_2,
\end{equation}
and
\begin{equation}\label{eq5}
	(dd^c p)^n\geq e^{C_1+C_2}|f_1(t, z)-f_2(t, z)|dV,
\end{equation}
in $\Omega$ for every $t\in (0, T)$ then $\tilde{u}$ is a subsolution to \eqref{eq2}
and $\tilde{v}$ is a supersolution to \eqref{eq1}.
\end{Lem}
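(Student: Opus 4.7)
My plan is to verify each assertion directly from the viscosity definition by translating test functions for $\tilde u$ and $\tilde v$ back to test functions for $u$ and $v$. For the first assertion, given an upper test function $q$ for $\tilde u = u + p - h$ at $(t_0, z_0)$, I set $\tilde q(t,z) := q(t,z) - p(z) + h(t)$; when $p$ is $C^2$-psh and $h$ is $C^1$ this is an admissible upper test function for $u$ at $(t_0,z_0)$, and for a general psh $p$ I would first replace it by a decreasing sequence of smooth psh regularisations $p_\varepsilon \downarrow p$ and pass to the limit via Lemma~\ref{lem.seq}. Applying the subsolution property of $u$ with the test $\tilde q$, and using that $p$ is psh (so $dd^c p \geq 0$) and $h$ is nondecreasing ($h'(t_0) \geq 0$), the expansion
\[
(dd^c q_{t_0})^n = \bigl(dd^c \tilde q_{t_0} + dd^c p\bigr)^n \geq (dd^c \tilde q_{t_0})^n + (dd^c p)^n,
\]
together with $\partial_t \tilde q(t_0,z_0) = \partial_t q(t_0,z_0) + h'(t_0) \geq \partial_t q(t_0,z_0)$ and $\tilde q(t_0,z_0) \geq q(t_0,z_0)$ (because $-p\geq 0$, $h\geq 0$), directly yields the subsolution inequality for $\tilde u$ in \eqref{eq1}. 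The supersolution assertion for $\tilde v$ is the mirror image: take $\tilde q = q + p - h$, observe that $dd^c q_{t_0} \geq 0$ together with $dd^c p \geq 0$ makes $\tilde q$ an admissible lower test for $v$, and all monotonicities flip consistently.

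For the \emph{moreover} part I reuse the same test-function transformation, but now retain the $(dd^c p)^n$ contribution in the Newton expansion and invoke the new hypothesis $(dd^c p)^n \geq e^{C_1+C_2}|f_1 - f_2|\,dV$. The uniform bounds on $\partial_t u, \partial_t v$ and on $F$ evaluated at $\sup u, \sup v$ translate at the test point into the pointwise estimate $e^{\partial_t \tilde q + F(\cdot, \tilde q)} \leq e^{C_1+C_2} =: M$. To show $\tilde u$ is a subsolution of \eqref{eq2} at $(t_0,z_0)$, I split according to the sign of $f_1(t_0,z_0) - f_2(t_0,z_0)$: if $f_1 \geq f_2$, the subsolution property of $u$ for \eqref{eq1} already suffices, since $e^{\partial_t q + F(\cdot, q)} f_1 \geq e^{\partial_t q + F(\cdot, q)} f_2$; if $f_1 < f_2$, the extra mass $(dd^c p)^n \geq e^{C_1+C_2}(f_2 - f_1)\,dV$ exactly compensates the enlargement of the density, and the whole matter reduces to the algebraic inequality $(X-M)f_2 \leq (Y-M)f_1$ with $X \leq Y \leq M$ and $f_2 \geq f_1 \geq 0$, which follows because $X - M \leq 0$ and $f_2 \geq f_1$. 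The argument for $\tilde v$ as a supersolution to \eqref{eq1} is dual.

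The hardest step will be justifying the pointwise chain of inequalities in the test-function computation when $p \in \mathcal{E}(\Omega)$ is only a Cegrell-class psh function, so that $(dd^c p)^n$ exists merely as a Borel measure rather than as a continuous density. I plan to handle this by approximating $p$ from above by a decreasing sequence $p_k \downarrow p$ of bounded psh functions whose pluripotential Monge--Amp\`ere measures dominate $e^{C_1+C_2}|f_1 - f_2|\,dV$ on compacta, using the approximation theory for the class $\mathcal{E}$ recalled in Section~\ref{subsec Cegrell}, carrying out the smooth argument for each $p_k$, and then passing to the limit via Lemma~\ref{lem.seq}. Matching the Cegrell-class approximation to the pointwise inequalities required at test points is the delicate technical point on which I expect to spend most of the effort.
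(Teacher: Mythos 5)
Your proposal is correct and follows essentially the same route as the paper: regularise $p$ and $h$ by smooth decreasing sequences, verify the sub/supersolution inequalities pointwise via the test-function translation $q\mapsto q\mp p\pm h$ (your superadditivity $(dd^c\tilde q+dd^cp)^n\geq(dd^c\tilde q)^n+(dd^cp)^n$ and the algebraic inequality $(X-M)f_2\leq(Y-M)f_1$ are exactly the content the paper leaves implicit in ``by the definition''), and pass to the limit with Lemma~\ref{lem.seq}. The one step you defer — producing smooth psh approximants whose Monge--Amp\`ere measures dominate $e^{C_1+C_2}|f_1-f_2|\,dV$ — is resolved in the paper by first replacing $|f_1-f_2|$ with the continuous envelope $g(z)=\max_{a\leq t\leq b}|f_1-f_2|$ (Lemma~\ref{lem continuity}), invoking the construction of \cite{EGZ} (pp.~1064--1066) to get $(dd^cp_j)^n\geq e^{C_1+C_2}g_{1/2^j}\,dV$ with $g_\epsilon$ an infimum over small balls, and then adding a small correction $\epsilon_j|z|^2$ to upgrade $g_{1/2^j}$ to $g$.
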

\begin{proof}
	Let $B\Subset\Omega$ be a ball and $0<a<b<T$. Then, there exist 
	$h_j: (a, b)\rightarrow [0, \infty)$ and 
	$p_j\in PSH(B)\cap C^{\infty}(\overline{B})$ such that
	\begin{itemize}
		\item $h_j$ is smooth and non-decreasing for every $j\in\N$.
		\item $h_j\searrow h$ in $(a, b)$ and $p_j\searrow p$ in $\overline{B}$ as
		$j\rightarrow\infty$.
	\end{itemize} 
By the definition of viscosity subsolution and viscosity supersolution, we get 
$u(t, z)+p_j(z)-h_j(t)$ is a subsolution to \eqref{eq1} and  
$v(t, z)-p_j(z)+h_j(t)$ is a supersolution to \eqref{eq2} in $(a, b)\times B$ for every $j$.
Hence, by Lemma \ref{lem.seq}, we have $u(t, z)+p(z)-h(t)$ is a subsolution to \eqref{eq1} and  $v(t, z)-p(z)+h(t)$ is a supersolution to \eqref{eq2} in $(a, b)\times B$.
Since $a, b$ and $B$ are arbitrary, we obtain the first conclusion.

Now, we assume that \eqref{eq3}, \eqref{eq4} and \eqref{eq5} are satisfied. Denote
	$$g(z)=\max\limits_{a\leq t\leq b}|f_1(t, z)-f_2(t, z)|.$$
	 Then $g\in C(\Omega)$ (by Lemma \ref{lem continuity}) and
	 $$(dd^c p)_P^n\geq e^{C_1+C_2}g(z)dV,$$
	 in $\Omega$. For every $0<\epsilon< d(B, \partial\Omega)$, we denote
	 $$g_{\epsilon}(z)=\inf\limits_{|\xi-z|<\epsilon}g(\xi).$$
 Then by \cite{EGZ}
 (pages 1064-1066) and by using convolution, the functions $p_j$  can be chosen such that
 \begin{center}
 		$(dd^c p_j)^n\geq e^{C_1+C_2}g_{1/2^j}dV,$
 \end{center}
in $B$ for every $j$. 

Note that $g_{\epsilon}$ converges uniformly to $g$ in $B$ as $\epsilon\searrow 0$. Let $\epsilon_j\searrow 0$ such that 
$$(dd^c\epsilon_j|z|^2)^n\geq e^{C_1+C_2}|g_{1/2^j}(z)-g(z)|dV,$$
in $B$. Denote $q_j=p_j+\epsilon_j|z|^2$, we have 	$q_j\in PSH(B)\cap C^{\infty}(\overline{B})$, $q_j\searrow p$ in $\overline{B}$ as
$j\rightarrow\infty$ and
$$(dd^cq_j)^n\geq e^{C_1+C_2}gdV.$$

Then, by the definition,  $u(t, z)+q_j(z)-h_j(t)$ is a subsolution to \eqref{eq2}
and $v(t, z)-q_j(z)+h_j(t)$ is a supersolution to \eqref{eq1} in $(a, b)\times B$
for every $j$.

 Hence, by Lemma \ref{lem.seq}, we obtain the second conclusion.
\end{proof}
\begin{Lem}\label{lem continuity}
	Assume that $U$ is an open subset of $\R^m\, (m\in\Z^+)$  and $a, b$ are real numbers with $a<b$. 
	Suppose that $g: [a, b]\times U\rightarrow\R$ is a continuous function. Then 
	$h(x)=\sup\limits_{a\leq t\leq b}g(t, x)$ is a continuous function on $U$.
\end{Lem}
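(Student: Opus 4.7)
The plan is to exploit uniform continuity of $g$ on a compact set to transfer the small oscillation of $g(t,\cdot)$ uniformly in $t$ to the supremum $h$.

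First, I fix an arbitrary point $x_0\in U$ and aim to show continuity of $h$ at $x_0$. Since $U$ is open in $\R^m$, I can choose a closed ball $K=\overline{B(x_0,r)}\subset U$ for some $r>0$, so that $[a,b]\times K$ is a compact subset of $[a,b]\times U$.

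Next, by the Heine--Cantor theorem, the continuous function $g$ is uniformly continuous on the compact set $[a,b]\times K$. So for every $\epsilon>0$ there exists $\delta\in(0,r)$ such that for all $t\in[a,b]$ and all $x\in K$ with $|x-x_0|<\delta$ one has $|g(t,x)-g(t,x_0)|<\epsilon$. Taking the supremum over $t\in[a,b]$ of the two inequalities
$$g(t,x_0)-\epsilon < g(t,x) < g(t,x_0)+\epsilon$$
yields $h(x_0)-\epsilon\le h(x)\le h(x_0)+\epsilon$, and thus $|h(x)-h(x_0)|\le\epsilon$ whenever $|x-x_0|<\delta$. Since $\epsilon>0$ and $x_0\in U$ were arbitrary, this proves that $h$ is continuous on $U$.

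There is no real obstacle here: the statement is a textbook consequence of the fact that the uniform limit (or uniform envelope) of a family of continuous functions parametrized by a compact set is continuous. The only points that require a moment of care are (i) ensuring the neighborhood on which we apply uniform continuity is genuinely compact and contained in $U$ (which is why I shrink to a closed ball), and (ii) noting that since $[a,b]$ is compact the suprema involved are actually attained, so the above chain of inequalities is valid without needing to pass to approximating maximizers.
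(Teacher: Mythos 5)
Your proof is correct. It takes a slightly different, and in fact more streamlined, route than the paper's: the paper first observes that $h$ is lower semi-continuous for free (as a supremum of continuous functions) and then establishes upper semi-continuity separately, by extracting a sequence $x_k\to x_0$ realizing $\limsup_{x\to x_0}h(x)$, choosing maximizers $t_k\in[a,b]$ with $h(x_k)=g(t_k,x_k)$, and invoking uniform continuity of $g$ on $[a,b]\times\overline{V}$ to compare $g(t_k,x_k)$ with $g(t_k,x_0)\le h(x_0)$. You instead prove the two-sided bound $|h(x)-h(x_0)|\le\epsilon$ in one step by applying uniform continuity on $[a,b]\times K$ and taking suprema of the resulting pointwise inequalities. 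The underlying tool (Heine--Cantor on a compact product neighborhood) is identical, but your version avoids both the semicontinuity split and the sequential extraction of maximizers; indeed your closing remark (ii) is superfluous for your own argument, since passing a two-sided inequality through a supremum needs no attainment --- it is the paper's proof, not yours, that relies on the maximum over $t$ being attained.
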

\begin{proof}
	Since $\{g(t, x)\}_{a\leq t\leq b}$ is a family of continuous functions on $U$, we get that $h$
	is lower semi-continuous on $U$. It remains to show that $h$ is upper semi-continuous.
	
	Fix $x_0\in U$. Let $V\Subset U$ be an open neighborhood of $x_0$. 
	Then, there exists a sequence $\{x_k\}_{k=1}^{\infty}\subset V\setminus\{x_0\}$ such that
	 $x_k\stackrel{k\to\infty}{\longrightarrow}x_0$ and 
	 $\lim\limits_{k\to\infty}h(x_k)=\limsup\limits_{x\to x_0}h(x)$. 
	 
	 Since $g(t, x_k)$ is continuous on $[a, b]$, there exists $t_k\in [a, b]$ such that $h(x_k)=g(t_k, x_k)$.
	 Since $g$ is continuous on the compact set $[a, b]\times\overline{V}$, we have $g$ is 
	 uniformly continuous on $[a, b]\times\overline{V}$. Then
	 \begin{center}
	 	$\lim\limits_{k\to\infty}(g(t_k, x_k)-g(t_k, x_0))=0$.
	 \end{center}
 Hence
 \begin{center}
 	$\lim\limits_{k\to\infty}(h(x_k)-h(x_0))
 	=\lim\limits_{k\to\infty}(g(t_k, x_k)-h(x_0))
 	\leq\lim\limits_{k\to\infty} (g(t_k, x_k)-g(t_k, x_0))=0$.
 \end{center}
Thus $h(x_0)\geq \limsup\limits_{x\to x_0}h(x)$. Since $x_0$ is arbitrary, we get that $h$ is upper semi-continuous.

The proof is completed.
\end{proof}
\subsection{Comparison principle and Perron envelope}
As is often the case in the viscosity theory and pluripotential theory, one of the  main technical tools is the  comparison principle:
\begin{The}\label{compa.the}
	\cite{EGZ15b}
	Let $u$  (resp. $v$) be a bounded subsolution (resp. supersolution) to the parabolic complex Monge–Amp\`ere equation
	\eqref{PMAfree} in $\Omega_T$.  Assume that one of the following conditions
	is satisfied
	\begin{itemize}
		\item [a)] $\mu (t, z)>0$ for every $(t, z)\in (0, T)\times\Omega$.
		\item[b)] $\mu$ is independent of $t$.
		\item[c)] Either $u$ or $v$ is locally Lipschitz in $t$ uniformly in $z$.
	\end{itemize}
	Then
	\begin{center}
		$\sup\limits_{\Omega_T}(u-v)\leq\sup\limits_{\partial_P(\Omega_T)}(u-v)_+,$
	\end{center}
where $u$ (resp. $v$) has been extended as an upper (resp. a lower) semicontinuous
function to $\overline{\Omega_T}$.
\end{The}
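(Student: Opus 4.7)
The plan is to follow the standard doubling-of-variables/Jensen--Ishii approach from \cite{CIL92}, adapted to the complex Monge--Amp\`ere setting as in \cite{EGZ,EGZ15} and specialized to the parabolic case. First I would reduce to a strict-subsolution comparison: replace $u$ by $u^{\varepsilon}(t,z) := u(t,z) - \varepsilon/(T-t) - \varepsilon|z|^{2}$, which (using Lemma \ref{lem modify} and the monotonicity of $F$) satisfies a slightly strengthened version of the differential inequality while converging to $u$ uniformly on compact subsets of $\Omega_{T}$. The claim is then equivalent to showing that for every such $\varepsilon$, $\sup_{\Omega_{T}}(u^{\varepsilon}-v)$ is attained on $\partial_{P}\Omega_{T}$. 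Arguing by contradiction, suppose instead that the supremum is attained at an interior point $(\hat t,\hat z)\in\Omega_{T}$ with value strictly larger than $\sup_{\partial_{P}\Omega_{T}}(u^{\varepsilon}-v)_{+}$.

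Next I would double the variables. Consider
\[
\Phi_{\alpha,\beta}(t,s,z,w)=u^{\varepsilon}(t,z)-v(s,w)-\tfrac{1}{2\alpha}|z-w|^{2}-\tfrac{1}{2\beta}(t-s)^{2}
\]
on $[0,T)^{2}\times\overline{\Omega}^{2}$. Standard penalization arguments show that, for $\alpha,\beta$ small, $\Phi_{\alpha,\beta}$ attains its maximum at some $(t_{\alpha,\beta},s_{\alpha,\beta},z_{\alpha,\beta},w_{\alpha,\beta})$ which, thanks to the strict-subsolution reduction and the boundary gap, lies in $(0,T)^{2}\times\Omega^{2}$. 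The parabolic Jensen--Ishii lemma then provides, after a sup/inf-convolution regularization ensuring test functions remain plurisubharmonic on the supersolution side, numbers $\tau_{1},\tau_{2}$ with $\tau_{1}-\tau_{2}=(t_{\alpha,\beta}-s_{\alpha,\beta})/\beta$ and Hermitian matrices $A,B$ with $A\le B$, such that $A$ bounds the complex Hessian of $u^{\varepsilon}$ from above and $B$ bounds the complex Hessian of $v$ from below at the respective points. Using the subsolution inequality $\det A\ge e^{\tau_{1}+F(t_{\alpha,\beta},z_{\alpha,\beta},u^{\varepsilon})}\mu(t_{\alpha,\beta},z_{\alpha,\beta})$ together with the supersolution inequality $\det B\le e^{\tau_{2}+F(s_{\alpha,\beta},w_{\alpha,\beta},v)}\mu(s_{\alpha,\beta},w_{\alpha,\beta})$, the monotonicity of $F$ in its third variable and of $\det$ on positive Hermitian matrices, one extracts the contradiction by letting first $\beta\to 0$ and then $\alpha\to 0$.

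The three hypotheses (a), (b), (c) enter precisely at the final passage to the limit, where one must compare $\mu(t_{\alpha,\beta},z_{\alpha,\beta})$ with $\mu(s_{\alpha,\beta},w_{\alpha,\beta})$ along the diagonal $t\approx s$, $z\approx w$. Under (a) the positivity of $\mu$ allows a genuinely strict version of the Monge--Amp\`ere inequality at the penalization maximum. Under (b) the density does not depend on $t$ at all, so the pointwise comparison is automatic. Under (c) the Lipschitz control on $\partial_{t}$ of one of the functions lets one send the time-penalization parameter $\beta\to 0$ independently of $\alpha$, so that only a spatial comparison of $\mu$ is needed.

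The main obstacle, as always in complex Monge--Amp\`ere viscosity theory, is that the supersolution inequality may only be tested against plurisubharmonic $C^{(1,2)}$ test functions. Consequently the matrix $B$ provided by the abstract Jensen--Ishii lemma is a priori only Hermitian, not nonnegative, and $\det B\le e^{\partial_{t}q+F}\mu$ need not hold. Overcoming this requires inserting a plurisubharmonic regularization step (sup-convolution in $z$, as in \cite[\S2]{EGZ15}) and simultaneously a parabolic regularization in $t$; keeping both controls compatible while preserving the penalization maximum in the interior is the delicate technical point, and it is what forces the case distinction (a)--(c) above.
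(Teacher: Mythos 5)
First, note that the paper does not reprove this theorem: it is quoted verbatim from \cite{EGZ15b}, so the only proof to compare against is the one there. Your outline does reproduce the general architecture of that proof (strict-subsolution perturbation, doubling of variables, parabolic Jensen--Ishii, positivity of the sub-jet Hessian transferred to the super-jet so that the supersolution inequality may be invoked), but two points as written are genuine gaps rather than omitted routine details.

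First, the identity you attribute to the parabolic Jensen--Ishii lemma, $\tau_1-\tau_2=(t_{\alpha,\beta}-s_{\alpha,\beta})/\beta$, is not what the lemma gives and would wreck the argument if it were: the penalization only controls $(t_{\alpha,\beta}-s_{\alpha,\beta})^2/\beta\to 0$, so $(t_{\alpha,\beta}-s_{\alpha,\beta})/\beta$ is unbounded in general, and $e^{\tau_1-\tau_2}$ could not be discarded in the final inequality. What one actually needs (and gets, with the penalization $\tfrac{1}{2\beta}(t-s)^2$, or by keeping a single time variable as in \cite{EGZ15b}) is $\tau_1=\tau_2$ up to the strictness term coming from $\varepsilon/(T-t)$; it is this exact cancellation of the time derivatives that makes the contradiction work. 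Second, and more seriously, the role of hypotheses a)--c) is misidentified. Comparing $\mu(t_{\alpha,\beta},z_{\alpha,\beta})$ with $\mu(s_{\alpha,\beta},w_{\alpha,\beta})$ along the diagonal is not the issue: $f$ is assumed continuous in $(t,z)$ in all three cases, so that comparison is always available. The real obstruction is the degeneracy $\mu=0$: where $\mu$ vanishes, the subsolution inequality $e^{\partial_t q+F}\mu\leq (dd^cq)^n$ gives no upper bound on the time component of the parabolic semi-jet, so the closure condition required by the parabolic Jensen--Ishii lemma (condition (8.5) in \cite{CIL92}, Theorem 8.3) fails and the whole jet machinery cannot be applied. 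Hypothesis a) restores that bound (one can solve for $\partial_t q$ when $\mu>0$); hypothesis c) supplies it a priori; and hypothesis b) is handled by first reducing to c) via the time sup-convolution $u^k$ of Lemma \ref{rglzingint.lem}, which produces a Lipschitz-in-$t$ subsolution of the \emph{same} equation precisely because $\inf_{|s-t|<A/k}\mu(s,\cdot)=\mu$ when $\mu$ is $t$-independent. Your plan, which treats b) as ``the pointwise comparison is automatic'' and c) as a way to decouple $\beta$ from $\alpha$, does not engage with this mechanism, and since the three-way case distinction is exactly what distinguishes this statement from the classical real-variable comparison principle, the proposal as it stands does not prove the theorem in cases b) and c).
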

Another main technical tool for viscosity theory is Perron method.
 By using Perron method, one get the following result:
\begin{Lem}\label{lem perron EGZ15b}
	\cite[Lemma 5.4]{EGZ15b}
	Given any non empty family $S_0$ of bounded subsolutions to the parabolic
	equation \eqref{PMAfree} which is bounded above by a continuous function,
	the usc regularization	of the upper envelope 
	$\phi_{S_0}=\sup\limits_{\phi\in S_0}\phi$ is a subsolution to \eqref{PMAfree}.
	
	If $S$ is the family of all subsolutions to the Cauchy-Dirichlet problem 
	\eqref{PMA}, its envelope $\phi_S$  is a
	discontinuous viscosity solution to \eqref{PMAfree}. 
\end{Lem}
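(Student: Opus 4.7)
The plan is to follow Ishii's classical Perron method, adapted to the parabolic complex Monge-Amp\`ere setting as in \cite{CIL92, EGZ15b}. The two assertions are the standard building blocks: \emph{(i)} the upper envelope of a family of subsolutions, after usc regularization, is still a subsolution; and \emph{(ii)} the maximal subsolution of a Cauchy-Dirichlet problem is a discontinuous viscosity solution of the free equation.

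For the first assertion, I would fix $(t_0,z_0)\in\Omega_T$ and an upper test function $q$ for $\phi_{S_0}^*$ at $(t_0,z_0)$, and verify the subsolution inequality for $q$. By the definition of the usc envelope, there are sequences $(t_k,z_k)\to(t_0,z_0)$ and $\phi^k\in S_0$ with $\phi^k(t_k,z_k)\to\phi_{S_0}^*(t_0,z_0)=q(t_0,z_0)$. Perturb $q$ by a strictly convex error, e.g.\ $q_k(t,z):=q(t,z)+|t-t_0|^2+|z-z_0|^2$; then $\phi^k-q_k$ attains its maximum on a small fixed compact neighborhood at some $(t_k',z_k')\to(t_0,z_0)$. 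Applying the subsolution property of $\phi^k$ at $(t_k',z_k')$ with $q_k$ (shifted by an additive constant so equality holds at that point) and passing to the limit using the continuity of $F$, $\mu$ and of the derivatives of $q$, yields the required inequality for $q$ at $(t_0,z_0)$.

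For the second assertion, $\phi_S^*$ is a subsolution to \eqref{PMAfree} by \emph{(i)}, and by maximality of $\phi_S$ we have $\phi_S=\phi_S^*$, so $\phi_S$ is already usc and is itself a subsolution. The core step is to show that $(\phi_S)_*$ is a supersolution of \eqref{PMAfree}. I would argue by contradiction: if it failed at an interior point $(t_0,z_0)\in\Omega_T$, there would be a lower test function $q$ for $(\phi_S)_*$ at $(t_0,z_0)$ with $dd^c q_{t_0}(z_0)\ge 0$ and a strict reverse of the supersolution inequality. By continuity, for some $\delta>0$ the function $q+\delta$ satisfies the strict subsolution inequality
\[
(dd^c(q+\delta))^n \;\ge\; e^{\partial_t(q+\delta)+F(t,z,q+\delta)}\,\mu(t,z)
\]
on a small parabolic neighborhood $U\Subset\Omega_T$ of $(t_0,z_0)$. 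Picking a sequence $(t_k,z_k)\to(t_0,z_0)$ realizing $\phi_S(t_k,z_k)\to(\phi_S)_*(t_0,z_0)=q(t_0,z_0)$ shows $q+\delta>\phi_S$ at some point of $U$. Defining $\tilde\phi:=\max(\phi_S,q+\delta)$ inside $U$ and $\tilde\phi:=\phi_S$ outside then produces an element of $S$ strictly larger than $\phi_S$ somewhere, contradicting maximality of $\phi_S$.

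The step I expect to be the main obstacle is the Perron bump gluing: one must check that $\tilde\phi$ joins continuously with $\phi_S$ across $\partial U$ (i.e.\ that $q+\delta<\phi_S$ on $\partial U$, forcing $U$ to be chosen small enough relative to $\delta$), that the ``max of two subsolutions'' argument indeed makes $\tilde\phi$ a viscosity subsolution of \eqref{PMAfree}, and that $\tilde\phi$ still satisfies the Cauchy-Dirichlet inequalities $\le\varphi$ on $[0,T)\times\partial\Omega$ and $\le u_0$ at $\{t=0\}$. Both points are possible because $(t_0,z_0)$ is an interior point of $\Omega_T$, so $U$ can be chosen compactly contained in $\Omega_T$, well away from the parabolic boundary.
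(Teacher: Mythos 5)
Your strategy is the standard Ishii--Perron argument, which is exactly what the cited source [EGZ15b, Lemma 5.4] uses and what the paper itself carries out in detail for the closely analogous Lemma \ref{lem perron}. Two steps of your bump construction, however, would fail as written.

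First, the localization of the lower test function. You propose to use $q+\delta$ as the bump and to force $q+\delta<\phi_S$ on $\partial U$ by ``choosing $U$ small enough relative to $\delta$''. This cannot work in general: you only know $q\le(\phi_S)_*\le\phi_S$ with equality at $(t_0,z_0)$, and $\inf_{\partial U}(\phi_S-q)$ may be $0$ for \emph{every} small neighborhood $U$ (e.g.\ if $\phi_S=(\phi_S)_*=q$ along a curve through $(t_0,z_0)$), so no $\delta>0$ gives the required strict separation. The standard fix is to replace $q+\delta$ by $q+\delta-\gamma\bigl(|t-t_0|^2+|z-z_0|^2\bigr)$, which lies strictly below $(\phi_S)_*$ on $\partial U$ once $\delta\ll\gamma\,\mathrm{dist}\bigl((t_0,z_0),\partial U\bigr)^2$, while still being a strict classical subsolution and still psh in $z$ near $z_0$ (here one uses that $dd^cq_{t_0}(z_0)\ge0$ together with $(dd^cq_{t_0})^n(z_0)>0$ forces $dd^cq_{t_0}(z_0)>0$). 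This is precisely the role of the functions $p_\delta^{\pm}$ and $\tilde p$ in the paper's proof of Lemma \ref{lem perron}; you also need a tilt in $t$ (or an absolute value $|t-t_0|$) to get separation on the top and bottom of the parabolic cylinder.

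Second, the gluing and the final contradiction. Your $\tilde\phi=\max(\phi_S,q+\delta)$ is built from $\phi_S$ itself, which, as a supremum of usc functions, need not be usc; so $\tilde\phi$ need not be usc and is not literally an element of $S$. Replacing $\phi_S$ by $\phi_S^*$ reintroduces your earlier unjustified claim ``$\phi_S=\phi_S^*$ by maximality'': part (i) only shows that $\phi_S^*$ is a subsolution of the free equation \eqref{PMAfree}, not that it satisfies the Cauchy--Dirichlet inequalities $\le\varphi$ and $\le u_0$ (usc regularization can increase boundary values), which is exactly why the lemma's conclusion is phrased for \eqref{PMAfree} only. One must either pass to the usc envelope of the bumped function and verify it still belongs to $S$ because it coincides with $\phi_S$ near $\partial_P\Omega_T$, or, as in the paper's proof of Lemma \ref{lem perron}, replace $\phi_S$ near the compact set $\partial U$ by a finite maximum $\max\{u_{a_1},\dots,u_{a_m}\}$ of elements of $S$ chosen by a covering argument, so that the glued function is genuinely a member of $S$ exceeding $\phi_S$ at $(t_0,z_0)$. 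A more minor point: in part (i), a fixed quadratic penalty $|z-z_0|^2$ leaves the non-vanishing residue $dd^c|z|^2$ in the limit of the Hessians, so the limiting inequality is weaker than required; take penalty coefficients tending to $0$ (with a diagonal extraction) or a quartic penalty whose Hessian vanishes at $z_0$.
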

The above lemma is useful for the case where $\mu (t, z)>0$ or $\mu$ is independent of $t$. 
In this case, if for every $\epsilon>0$, there exist $\epsilon$-subarriers and 
$\epsilon$-superbarrier (see the below definition) for \eqref{PMA} then by combining 
Lemma \ref{lem perron EGZ15b} and Theorem \ref{compa.the}, one obtain the existence of solution 
to \eqref{PMA}.
\begin{Def}
	a) A function $u\in USC([0, T)\times\bar{\Omega})$ is called $\epsilon$-subbarrier for
	\eqref{PMA} if $u$ is subsolution to \eqref{PMAfree}
	in the viscosity sense such that $u_0-\epsilon\leq u_*\leq u\leq u_0$ in 
	$\{0\}\times\bar{\Omega}$ and $\varphi-\epsilon\leq u_*\leq u\leq \varphi$ in
	$[0, T)\times\partial\Omega.$\\
	b) A function $u\in LSC([0, T)\times\bar{\Omega})$ is called $\epsilon$-superbarrier for
	\eqref{PMA} if $u$ is supersolution to \eqref{PMAfree}
	in the viscosity sense such that $u_0+\epsilon\geq u^*\geq u\geq u_0$ in 
	$\{0\}\times\bar{\Omega}$ and $\varphi+\epsilon\geq u^*\geq u\geq \varphi$ in
	$[0, T)\times\partial\Omega.$
\end{Def}
In this paper, we need a modified version of Lemma \ref{lem perron EGZ15b}:
\begin{Lem}\label{lem perron}
	Assume that for every $\epsilon>0$, the problem \eqref{PMA} admits 
	a continuous $\epsilon$-superbarrier which is Lipschitz in $t$
	and a continuous $\epsilon$-subbarrier. 
	Denote by $S$ the family of all continuous subsolutions  to	\eqref{PMA}.
	Then $\phi_S=\sup\{v: v\in S\}$ is a
	 discontinuous viscosity solution to \eqref{PMA}. 
\end{Lem}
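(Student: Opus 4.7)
The plan is to adapt the classical Perron method used for Lemma~\ref{lem perron EGZ15b}, with extra care in the bump construction because the family $S$ consists of \emph{continuous} subsolutions rather than arbitrary bounded ones.

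First, I would settle the boundary/initial constraints and the subsolution property of $\phi_S^*$. Each continuous $\epsilon$-subbarrier $u_\epsilon^-$ lies in $S$, so $S$ is nonempty; the comparison principle Theorem~\ref{compa.the}(c) — this is where the hypothesis that the $\epsilon$-superbarrier is Lipschitz in $t$ is used — applied against a continuous Lipschitz-in-$t$ $\epsilon$-superbarrier $u_\epsilon^+$ gives $v \leq u_\epsilon^+$ for every $v \in S$, hence $\phi_S \leq u_\epsilon^+$. Since $u_\epsilon^+ \leq \varphi + \epsilon$ on $[0,T)\times\partial\Omega$ and $u_\epsilon^+(0,\cdot) \leq u_0+\epsilon$, letting $\epsilon\downarrow 0$ yields $\phi_S^* \leq \varphi$ on $[0,T)\times\partial\Omega$ and $\phi_S^*(0,\cdot)\leq u_0$; dually $\phi_{S,*}\geq \varphi$ and $\phi_{S,*}(0,\cdot)\geq u_0$ follow from $u_\epsilon^-\in S$. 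Lemma~\ref{lem perron EGZ15b} applied to the family $S$ (made bounded by replacing each $v\in S$ by $\max(v,u_\epsilon^-)$, which stays in $S$) then gives that $\phi_S^*$ is a viscosity subsolution of \eqref{PMAfree} in $\Omega_T$.

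The main step is to show that $\phi_{S,*}$ is a supersolution of \eqref{PMAfree} in $\Omega_T$, which I would prove by contradiction via a bump. If it fails at some $(t_0,z_0)\in\Omega_T$, there is a lower $C^{1,2}$ test function $q$ with $dd^c q_{t_0}(z_0)\geq 0$ and
\[
(dd^c q_{t_0}(z_0))^n > e^{\dt q(t_0,z_0)+F(t_0,z_0,q(t_0,z_0))}\mu(t_0,z_0);
\]
since $\mu\geq 0$, strictness forces $dd^c q_{t_0}(z_0)>0$. Setting
\[
\tilde q(t,z)=q(t,z)+\delta-c\bigl((t-t_0)^2+|z-z_0|^2\bigr),
\]
for sufficiently small $\delta,c,r>0$ and a ball $B_r\Subset\Omega_T$ around $(t_0,z_0)$, continuity of $F$ and $\mu$ combined with the strict inequality at $(t_0,z_0)$ make $\tilde q$ a smooth classical subsolution of \eqref{PMAfree} on $B_r$ with $\tilde q(t_0,z_0)=\phi_{S,*}(t_0,z_0)+\delta$, and, by continuity of $q$ at $(t_0,z_0)$, $\tilde q\leq\phi_{S,*}(t_0,z_0)-cr^2/2$ on $\partial B_r$.

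The principal obstacle — the reason a direct appeal to Lemma~\ref{lem perron EGZ15b} is insufficient — is to glue $\tilde q$ to a member of $S$ while preserving \emph{continuity}, since $\phi_S$ itself need not be continuous. For each $p\in\partial B_r$, using $\phi_S=\sup S$ and the lower semi-continuity of $\phi_{S,*}$ at $(t_0,z_0)$ (which gives $\phi_{S,*}\geq \phi_{S,*}(t_0,z_0)-\tau$ on $B_r$ for small $r$), I would pick $v_p\in S$ with $v_p(p)>\phi_{S,*}(t_0,z_0)-2\tau$; compactness of $\partial B_r$ and continuity of each $v_p$ then give a finite subfamily whose maximum $v_0$ is again a continuous element of $S$ (the max of finitely many continuous viscosity subsolutions is a continuous viscosity subsolution, with inherited boundary/initial data) and satisfies $v_0>\tilde q$ on a neighborhood of $\partial B_r$ once $3\tau<cr^2/2$. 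Then
\[
\psi(t,z)=\begin{cases}\max\bigl(v_0(t,z),\tilde q(t,z)\bigr),&(t,z)\in B_r,\\ v_0(t,z),&(t,z)\in\Omega_T\setminus B_r,\end{cases}
\]
agrees with $v_0$ on a neighborhood of $\partial B_r$, is therefore continuous and a viscosity subsolution of \eqref{PMAfree} (subsolution being a local property, and $\max$ of subsolutions a subsolution on $B_r$), and since it equals $v_0$ on the parabolic boundary it belongs to $S$. Finally, choosing $(t_k,z_k)\to(t_0,z_0)$ with $\phi_S(t_k,z_k)\to\phi_{S,*}(t_0,z_0)$, we get $\psi(t_k,z_k)\geq\tilde q(t_k,z_k)\to\phi_{S,*}(t_0,z_0)+\delta>\phi_S(t_k,z_k)$ for $k$ large, contradicting $\psi\leq\phi_S$.
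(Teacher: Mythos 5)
Your overall architecture is the paper's: barriers plus Theorem \ref{compa.the}(c) for the parabolic boundary values, Lemma \ref{lem perron EGZ15b} for the subsolution property of $\phi_S^*$, and a Perron bump glued to a finite maximum of continuous elements of $S$ over a compact cover of $\partial B_r$ so that the competitor stays continuous. But there is a genuine gap in your boundary estimate for the bump. You assert that ``by continuity of $q$ at $(t_0,z_0)$'' one has $\tilde q\leq \phi_{S,*}(t_0,z_0)-cr^2/2$ on $\partial B_r$. On $\partial B_r$ one has $\tilde q=q+\delta-cr^2$, and the only available control is $|q-q(t_0,z_0)|\leq\omega(r)$ with $\omega$ the modulus of continuity of $q$; since $q$ is $C^{(1,2)}$ with, in general, non-vanishing gradient at $(t_0,z_0)$, $\omega(r)$ is of order $r$, which dominates $cr^2$ as $r\to 0$. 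Moreover $c$ cannot be taken large, since it is constrained from above by the requirement that $dd^c\tilde q_t>0$ and that $\tilde q$ remain a classical subsolution. So the claimed inequality fails whenever $(t_0,z_0)$ is not a critical point of $q$, and with it the comparison $v_p(p)>\phi_{S,*}(t_0,z_0)-2\tau>\tilde q(p)+\tau$ on which the construction of $v_0$ rests.

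The repair is to compare pointwise with $\phi_{S,*}$ on $\partial B_r$ rather than with its value at the center: since $q\leq\phi_{S,*}$ on a neighborhood of $(t_0,z_0)$ (it is a lower test function), on $\partial B_r$ one gets $\tilde q\leq\phi_{S,*}-cr^2+\delta\leq\phi_S-cr^2/2$ once $\delta\leq cr^2/2$; then for each $p\in\partial B_r$ choose $v_p\in S$ with $v_p(p)>\phi_S(p)-cr^2/4>\tilde q(p)+cr^2/4$ and run your compactness argument with the margin $cr^2/8$. This is exactly what the paper does: its bump $\tilde p$ satisfies $\tilde p<\phi_S-\delta_2$ on the boundary of the bump region and the functions $u_a$ are chosen with $u_a(a)>\phi_S(a)-\delta_2/2$. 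With this correction the remainder of your argument, including the limiting sequence $(t_k,z_k)$ at the end (which correctly covers the case $\phi_S(t_0,z_0)>\phi_{S,*}(t_0,z_0)$, although here $\phi_S$, being a supremum of continuous functions, is already lower semi-continuous so that $\phi_{S,*}=\phi_S$), goes through.
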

\begin{proof} 
	By the existence of $\epsilon$-subbarriers and $\epsilon$-superbarriers,
	 and by using Theorem \ref{compa.the}, we have
	 $\phi_S$ satisfies the boundary condition and initial condition of \eqref{PMA}.
	  Moreover, the set of
	  continuous points of $\phi_S$ contains $\partial_P\Omega_T$. Hence, $\phi_S$ is a 
	  discontinuous viscosity solution to \eqref{PMA} iff it is a 
	  discontinuous viscosity solution to \eqref{PMAfree}.	By Lemma \ref{lem perron EGZ15b},
	   we have $(\phi_S)^*$ is a subsolution to \eqref{PMAfree}. Then, it remains to show
	 that $(\phi_S)_*(=\phi_S)$ is a supersolution to \eqref{PMAfree}.
	 
	 Assume that $\phi_S$ is not a supersolution to \eqref{PMAfree}. Then, there exist a point
	 $(t_0, z_0)\in (0, T)\times\Omega$, an open neighborhood $I\times U\Subset (0, T)\times\Omega$ of 
	 $(t_0, z_0)$ and a $C^{(1, 2)}$ function $p: I\times U\rightarrow\R$ such that
	 \begin{itemize}
	 	\item $(\phi_S-p)(t_0, z_0)=\min\limits_{I\times U} (\phi_S-p)$;\\
	 	\item At $(t_0, z_0)$, the complex Hessian matrix 
	 	$(\frac{\partial^2p}{\partial z_j\partial\bar{z}_k})_{1\leq j, k\leq n}$ of $p$ is positive and
	 	$(dd^c p)^n>e^{\partial_{t}p(t_0, z_0)+F(t_0, z_0, p(t_0, z_0))}\mu (t_0, z_0)$.
	 \end{itemize}
	Let $0<\delta\ll 1$ such that, in a neighborhood $I_{\delta}\times U_{\delta}\subset I\times U$
	of   $(t_0, z_0)$, the functions $$p_{\delta}^{\pm}=p-\delta |z-z_0|^2\mp\delta (t-t_0)$$
	 are plurisubharmonic in $z$ and
	 $$(dd^c p_{\delta}^\pm)^n>e^{\partial_{t}p_{\delta}^\pm+F(t, z, p_{\delta}^\pm(t, z))}\mu (t, z).$$ Let $0<\delta_1\ll 1$ such that 
	 $$\delta_2:=-\delta_1+\delta\min\limits_{\partial (I_{\delta}\times U_{\delta})}
	 (|t-t_0|+|z-z_0|^2)>0.$$ We define
	 \begin{center}
	 $\tilde{p}:=\min\{p_{\delta}^+, p_{\delta}^-\}+\delta_1=
	 p-\delta(|t-t_0|+|z-z_0|^2)+\delta_1.$
	 \end{center}
	 Then $\tilde{p}$ is a continuous subsolution to \eqref{PMAfree}
	  in $I_{\delta}\times U_{\delta}$
	 such that $\tilde{p}<\phi_S-\delta_2$ on $\partial (I_{\delta}\times U_{\delta})$
	 and  $\tilde{p}(t_0, z_0)>\phi_S (t_0, z_0)$.
	 
	 By the definition of $\phi_S$, for each $a\in \partial(I_{\delta}\times U_{\delta})$,
	  there exists  $u_a\in S$ such that 
	 $$u_a(a)>\phi_S(a)-\frac{\delta_2}{2}>\tilde{p}(a)+\dfrac{\delta_2}{2}.$$
	 By the continuity of $u_a$ and $\tilde{p}$, there exists a neighborhood $V_a$ of $a$
	 such that $u_a>\tilde{p}-\delta_2/2$ in $V_a$. Since 
	 $\partial(I_{\delta}\times U_{\delta})$ is compact, there exists $a_1,..., a_m\in
	 \partial(I_{\delta}\times U_{\delta})$ such that $\partial(I_{\delta}\times U_{\delta})$
	 is covered by $\{V_{a_j}\}_{j=1}^m$. Define
	 \begin{center}
	 $u=\max\{u_{a_1},..., u_{a_m}\}.$
	 \end{center}
	 Then $u\in S$ and $u>\tilde{p}+\dfrac{\delta_2}{2}$ near
	  $\partial(I_{\delta}\times U_{\delta})$. Define
	  \begin{center}
	  $\tilde{u}(t, z)=\begin{cases}
	  u(t,z)\qquad\mbox{if}\quad (t, z)\notin I_{\delta}\times U_{\delta},\\
	  \max\{u(t, z), \tilde{p}(t, z)\}\qquad\mbox{if}\quad (t, z)\in I_{\delta}\times U_{\delta}.
	  \end{cases}$
	  \end{center}
	  Then $\tilde{u}\in S$ and $\tilde{u}(t_0, z_0)\geq \tilde{p}(t_0, z_0)>\phi_S(t_0, z_0)$.
	  We get a contradiction. 
	   
	   Thus $\phi_S$ is a supersolution to \eqref{PMAfree}.
	   
	   The proof is completed.
\end{proof}
Similar to Lemma \ref{lem perron}, we also have
\begin{Lem}\label{lem perron2}
	Assume that for every $\epsilon>0$, the problem \eqref{PMA} admits 
	a continuous $\epsilon$-subbarrier which is Lipschitz in $t$
	and a continuous $\epsilon$-superbarrier. 
	Denote by $S$ the family of all continuous subsolutions  to	\eqref{PMA}
	which is Lipschitz in $t$.
	Then $\phi_S=\sup\{v: v\in S\}$ is a
	discontinuous viscosity solution to \eqref{PMA}. 
\end{Lem}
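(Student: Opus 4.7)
The plan is to mirror the proof of Lemma \ref{lem perron}, carefully tracking the Lipschitz-in-$t$ constraint on members of $S$ at every step. There are three blocks: (i) show $\phi_S$ takes the prescribed parabolic boundary data; (ii) show $(\phi_S)^*$ is a viscosity subsolution to \eqref{PMAfree}; (iii) rule out, via a Perron bump argument, the failure of $\phi_S$ to be a supersolution.

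For (i), fix $\epsilon>0$ and let $\underline{u}_\epsilon$ and $\bar{u}_\epsilon$ be the given continuous $\epsilon$-subbarrier (Lipschitz in $t$) and $\epsilon$-superbarrier. Since $\underline{u}_\epsilon$ is continuous and Lipschitz in $t$, it lies in $S$, so $\phi_S \geq \underline{u}_\epsilon \geq \varphi-\epsilon$ on $[0,T)\times\partial\Omega$ and $\phi_S \geq u_0-\epsilon$ on $\{0\}\times\bar{\Omega}$. For the reverse bound, pick any $v\in S$; because $v$ is Lipschitz in $t$, condition (c) of Theorem \ref{compa.the} applies to the pair $(v,\bar{u}_\epsilon)$, yielding $v\leq \bar{u}_\epsilon \leq \varphi+\epsilon$ and $v\leq u_0+\epsilon$ on the parabolic boundary. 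Taking the supremum over $v$ and letting $\epsilon\to 0$ gives $\phi_S=\varphi$ on $[0,T)\times\partial\Omega$ and $\phi_S=u_0$ on $\{0\}\times\bar{\Omega}$. This shows $\phi_S$ is continuous on $\partial_P\Omega_T$, so as in Lemma \ref{lem perron} it suffices to prove that $\phi_S$ is a discontinuous solution to \eqref{PMAfree}. The usc envelope $(\phi_S)^*$ is a subsolution by Lemma \ref{lem perron EGZ15b} applied to the family $S$ (which is bounded above by the continuous function $\bar{u}_1+1$), settling (ii).

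For (iii), assume for contradiction that $\phi_S$ fails to be a supersolution to \eqref{PMAfree} at some interior $(t_0,z_0)$. Repeating verbatim the construction in the proof of Lemma \ref{lem perron}, we obtain a small neighborhood $I_\delta\times U_\delta\Subset \Omega_T$ of $(t_0,z_0)$ and a smooth local strict subsolution
\begin{center}
$\tilde p(t,z)=p(t,z)-\delta(|t-t_0|+|z-z_0|^2)+\delta_1$,
\end{center}
which is plurisubharmonic in $z$, satisfies $\tilde p(t_0,z_0)>\phi_S(t_0,z_0)$, and $\tilde p<\phi_S-\delta_2$ on $\partial(I_\delta\times U_\delta)$ for some $\delta_2>0$. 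Covering the compact boundary by finitely many sets $V_{a_1},\dots,V_{a_m}$ on each of which some $u_{a_j}\in S$ exceeds $\tilde p+\delta_2/2$, form $u=\max\{u_{a_1},\dots,u_{a_m}\}$ and then glue
\begin{center}
$\tilde u=u$ on $\Omega_T\setminus (I_\delta\times U_\delta)$, \quad $\tilde u=\max\{u,\tilde p\}$ on $I_\delta\times U_\delta$.
\end{center}
By the usual matching argument $\tilde u$ is a continuous subsolution to \eqref{PMAfree} with the same boundary and initial data as the $u_{a_j}$, and $\tilde u(t_0,z_0)\geq \tilde p(t_0,z_0)>\phi_S(t_0,z_0)$.

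The new point, and the main thing to check relative to Lemma \ref{lem perron}, is that $\tilde u\in S$, i.e.\ $\tilde u$ is Lipschitz in $t$ uniformly in $z$. This is immediate: each $u_{a_j}$ is Lipschitz in $t$ (members of $S$), the pointwise maximum of finitely many functions that are Lipschitz in $t$ is Lipschitz in $t$ with constant the maximum of the individual constants, and $\tilde p$ is $C^1$ on the compact set $\overline{I_\delta\times U_\delta}$, hence Lipschitz in $t$ there. Thus $\tilde u\in S$, contradicting the definition of $\phi_S$. Hence $\phi_S$ is a supersolution to \eqref{PMAfree}, completing the proof. The only delicate point is this Lipschitz verification under gluing; everything else is a direct transcription of the proof of Lemma \ref{lem perron}.
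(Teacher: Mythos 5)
Your proof is correct and is precisely the adaptation the paper intends: the paper omits the argument entirely, saying only that it is ``similar to Lemma \ref{lem perron}''. You correctly isolate the two places where the swapped Lipschitz hypotheses matter --- comparison against the (possibly non-Lipschitz) superbarrier now triggers condition (c) of Theorem \ref{compa.the} through the member of $S$, and the glued Perron bump must be verified to remain Lipschitz in $t$ so that it still belongs to $S$.
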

Using the comparison principle, we have the following   $L^\infty$ a priori estimates to the viscosity solution to \eqref{PMA}.
\begin{Prop} \label{l^infty:apriori}
	Consider the Cauchy-Dirichlet problem \eqref{PMA} (with $\Omega$ is a smooth bounded
	strongly pseudoconvex domain).  Suppose that either $f(t,z)>0$ for every $(t, z)\in (0,T)
\times \Omega$ or $f$ is independent of $t$.  	If $u$ is a solution to \eqref{PMA} then there exists $C>0$ depending on $\Omega$, 
	$\sup\limits_{[0, T)\times\partial\Omega}|\varphi|$, $\min\limits_{\bar{\Omega}}u_0$,
	$\sup\limits_{[0, T)\times\Omega}f$,
	$\sup\limits_{[0, T)\times\bar{\Omega}}F(t, z, \max\varphi)$ such that
	\begin{center}
		$|u|\leq C$,
	\end{center}
	in $[0, T)\times\bar{\Omega}$.
\end{Prop}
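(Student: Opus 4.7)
The plan is to sandwich the solution $u$ between an explicit constant supersolution and an explicit smooth subsolution, then apply the comparison principle (Theorem~\ref{compa.the}), whose hypotheses are met because by assumption either $f>0$ on $\Omega_T$ or $f$ is independent of $t$.

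For the upper bound, set $M:=\sup_{[0,T)\times\partial\Omega}|\varphi|$. I would check that the constant function $M$ is a viscosity supersolution of \eqref{PMAfree}: if $q$ is a lower test function at $(t_0,z_0)$ with $dd^c q_{t_0}(z_0)\geq 0$ (the extra positivity built into the definition), then $q\leq M$ with equality at $(t_0,z_0)$ forces $dd^c q_{t_0}(z_0)\leq 0$ at this slice-wise maximum, so the two inequalities give $(dd^c q_{t_0})^n(z_0)=0$, which is trivially $\leq e^{\partial_t q + F}\mu$. The parabolic boundary inequality $M\geq\varphi$ is immediate, while $M\geq u_0$ on $\bar\Omega$ follows from the maximum principle for plurisubharmonic functions applied to $u_0$ with boundary values $\varphi(0,\cdot)$. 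Comparison then yields $u\leq M$ on $\Omega_T$.

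For the lower bound I would use the strong pseudoconvexity of $\Omega$ to fix a defining function $\rho\in C^2(\bar\Omega)\cap PSH(\Omega)$ with $\rho=0$ on $\partial\Omega$, $\rho<0$ in $\Omega$, and $dd^c\rho\geq\varepsilon_0\,dd^c|z|^2$ for some $\varepsilon_0>0$. Define
$$w(t,z):=A\rho(z)+C_0,\qquad C_0:=\min\bigl(\inf_{[0,T)\times\partial\Omega}\varphi,\;\min_{\bar\Omega}u_0\bigr),$$
with $A>0$ to be chosen. Since $\rho\leq 0$ and $\rho\equiv 0$ on $\partial\Omega$, automatically $w\leq u_0$ on $\{0\}\times\bar\Omega$ and $w\leq\varphi$ on $[0,T)\times\partial\Omega$. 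Because $w$ is smooth and strictly plurisubharmonic, it is a viscosity subsolution of \eqref{PMAfree} as soon as the classical inequality
$$A^n(dd^c\rho)^n\;\geq\;e^{F(t,z,w(z))}f(t,z)\,dV$$
holds pointwise. Since $F$ is non-decreasing in its last argument and $w\leq C_0\leq\max\varphi$, the right side is bounded by $e^K\sup f\cdot dV$ with $K:=\sup_{[0,T)\times\bar\Omega}F(t,z,\max\varphi)$, while the left side dominates $(A\varepsilon_0)^n n!\,dV$. Choosing $A$ sufficiently large in terms of $\varepsilon_0$, $K$, and $\sup f$ therefore turns $w$ into a subsolution. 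A second application of Theorem~\ref{compa.the} gives $u\geq w\geq C_0+A\min_{\bar\Omega}\rho$, a lower bound of the required form.

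The only real subtlety is the verification that a constant is a viscosity supersolution, which crucially uses the extra $dd^c q_{t_0}\geq 0$ hypothesis in the supersolution definition; without it, the test could be spoiled by a function with negative complex Hessian. Everything else is a bookkeeping exercise confirming that the constants depend only on $\Omega$ (through $\rho$ and $\varepsilon_0$), $\sup_{[0,T)\times\partial\Omega}|\varphi|$, $\min_{\bar\Omega}u_0$, $\sup f$, and $\sup F(t,z,\max\varphi)$ as stated.
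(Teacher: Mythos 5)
Your proposal is correct and follows essentially the same route as the paper: the paper also uses the constant $\sup_{[0,T]\times\partial\Omega}\varphi$ as a supersolution and $m+M\rho$ (with $\rho$ a strictly plurisubharmonic defining function and $M$ chosen so that $M^n(dd^c\rho)^n$ dominates $e^{\sup F(\cdot,\cdot,\max\varphi)}\mu$) as a subsolution, then applies Theorem~\ref{compa.the} under hypothesis (a) or (b). Your extra verifications — that a constant is a viscosity supersolution via the $dd^cq_{t_0}\geq 0$ clause, and that $u_0\leq\sup|\varphi|$ via the maximum principle for psh functions — are details the paper leaves implicit, but the argument is the same.
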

\begin{proof}
	Let $\rho\in C^2(\bar{\Omega})\cap PSH(\Omega)$ such that
	$\rho|_{\partial\Omega}=0$ and $(dd^c\rho)^n\geq\mu (t, .)$
	for all $t$.
	We define
$$		\overline{u}=\sup\limits_{[0, T]\times\partial\Omega}\varphi=const,
$$
and
$$	\underline{u}=m+M\rho,
$$ 
where 
$$m=\min\{-\sup\limits_{[0, T)\times\partial\Omega}|\varphi|,
	\min\limits_{\overline{\Omega}}u_0\}$$
and	$$M=\exp(\sup\limits_{[0, T]\times\bar{\Omega}}\frac{F(t, z, \max\varphi)}{n}).$$
Then $\underline{u}$ is a subsolution  and $\overline{u}$
is a supersolution to \eqref{PMA}. Moreover, in $\partial_P(\Omega_T)$,
\begin{center}
	$\underline{u}\leq u\leq \overline{u}.$
\end{center}
By the comparison principle (Theorem \ref{compa.the}), we have
$$\underline{u}\leq u\leq \overline{u},$$
in $\Omega_T$.

Hence, in $[0, T)\times\bar{\Omega}$,
\begin{center}
	$|u|\leq C$,
\end{center}
where $C=\max\{\sup\limits_{[0, T]\times\partial\Omega}\varphi, 
|m|+M\max\limits_{\bar{\Omega}}(-\rho) \}$.
\end{proof}
\subsection{Regularizing in time}
Given  a bounded upper semi-continuous function $u : \Omega_T \longrightarrow \R$,
we consider the upper approximating sequence by Lipschitz functions in $t$,  
$$
u^k (t,x) := \sup \{u (s,x) - k \vert s - t\vert , s \in [0,T)\}, \, \, (t,x) \in\Omega_T.
$$

If $v$ is a bounded lower semi-continuous function, we consider the lower approximating sequence of Lipschitz functions in $t$,
$$
v_k (t,x) := \inf \{v (s,x) + k \vert s - t\vert , s \in [0,T)\}, \, \, (t,x) \in\Omega_T.
$$

\begin{Lem} \label{rglzingint.lem}\cite{EGZ15b}
	For $k \in \R^+$, $u^k$ is an upper semi-continuous function which satisfies the following properties:
	\begin{itemize}
		\item $u (t,z) \leq u^k (t,z) \leq \sup_{\vert s - t\vert \leq A\slash k} u (s,z)$, where $A > 2\, osc_{\Omega_T} u$. 
		\item $\vert u^k (t,x) - u^k (s,x) \vert \leq k \vert s - t\vert$, 
		for $(s,z), (t,z) \in \Omega_T$. 
		\item For all $(t_0,z_0) \in [0,T-A/k] \times \Omega$, there exists $t_0^* \in [0,T)$ such that 
		$$
		\vert t_0^* - t_0\vert \leq A\slash k \text{ and } u^k (t_0,z_0) = u (t_0^*,z_0) - k \vert t_0 - t_0^*\vert.
		$$
	\end{itemize}
	
	Moreover, if $u$ satisfies  
	\begin{equation} \label{eq:subdiffineq}
	e^{\partial_t{u}  + F (t,z, u)} \mu(t, z) \leq (dd^ c u)^ n
	\text{ in }   (0, T)\times\Omega,
	\end{equation} 
	in the viscosity sense then  the function  $u^k$ is a subsolution of 
	$$
	e^{\partial_t{w}   + F_k (t, z, u)} \mu_k(t, z)  -  (dd^ c w)^ n = 0
	\text{ in } (A/k,T-A/k) \times \Omega,
	$$
	where $F_k (t, z, r) := \inf_{\vert s -t\vert \leq A \slash k}(F (s,z,r) + k \vert s - t\vert) $ and $\mu_k(t, z)=\inf_{|s-t|<A/k}\mu (s, z)$.
	The dual statement is true for a lower semi-continuous function $v$ which is a supersolution.
\end{Lem}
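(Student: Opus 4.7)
The plan is to verify each bulleted property of $u^k$ directly from its definition, then use the standard sup-convolution trick to promote the subsolution property of $u$ to one for $u^k$ with a modified data pair $(F_k,\mu_k)$.

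First I would establish the four listed properties. Upper semi-continuity is inherited from $u$ via the sup-convolution structure: given a sequence $(t_j,z_j)\to(t,z)$, pick $s_j$ almost realizing the supremum defining $u^k(t_j,z_j)$ (restricted to the effective window below), extract a subsequential limit $s^*=\lim s_j$, and invoke USC of $u$ to get $\limsup u^k(t_j,z_j)\leq u(s^*,z)-k|s^*-t|\leq u^k(t,z)$. The lower bound $u\leq u^k$ is immediate (choose $s=t$); for the upper bound note that if $k|s-t|>A>2\,\mathrm{osc}\,u$ then $u(s,z)-k|s-t|<\sup u-2\,\mathrm{osc}\,u<u(t,z)\leq u^k(t,z)$, so such $s$ are beaten by $s=t$ and the supremum is in fact taken over the compact window $|s-t|\leq A/k$. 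USC of $u$ on this window gives the attainment of $t_0^*$. The Lipschitz-in-$t$ estimate follows from the triangle inequality $|s-t|\leq|s-t'|+|t-t'|$ applied inside the sup.

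For the subsolution claim I would use the sup-convolution trick. Fix $(t_0,z_0)\in(A/k,T-A/k)\times\Omega$, an upper test $\tilde q$ for $u^k$ at $(t_0,z_0)$, and let $t_0^*\in[0,T)$ be an attainment point, so $u^k(t_0,z_0)=u(t_0^*,z_0)-k|t_0-t_0^*|$ and $|t_0-t_0^*|\leq A/k$. Define the time-shifted test function
$$q(s,z):=\tilde q(s+t_0-t_0^*,z)+k|t_0-t_0^*|.$$
For $(s,z)$ near $(t_0^*,z_0)$ one has $|s-(s+t_0-t_0^*)|=|t_0-t_0^*|$, hence
$$u(s,z)\leq u^k(s+t_0-t_0^*,z)+k|t_0-t_0^*|\leq\tilde q(s+t_0-t_0^*,z)+k|t_0-t_0^*|=q(s,z),$$
with equality at $(t_0^*,z_0)$; moreover $\partial_t q(t_0^*,z_0)=\partial_t\tilde q(t_0,z_0)$ and $dd^c q_{t_0^*}(z_0)=dd^c\tilde q_{t_0}(z_0)$. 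So $q$ is an upper test for $u$ at $(t_0^*,z_0)$, and the subsolution property of $u$ yields
$$(dd^c\tilde q_{t_0}(z_0))^n\geq\exp\bigl(\partial_t\tilde q(t_0,z_0)+F(t_0^*,z_0,q(t_0^*,z_0))\bigr)\mu(t_0^*,z_0).$$
Finally, the definitions $\mu_k(t_0,z_0)\leq\mu(t_0^*,z_0)$ and, via the $k|s-t|$ penalty in $F_k$ evaluated at $s=t_0^*$, together with monotonicity of $F$ in $r$, absorb the time shift and the discrepancy between $u(t_0^*,z_0)$ and $\tilde q(t_0,z_0)$, producing the desired bound with $F_k$ and $\mu_k$ on the right-hand side.

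The main obstacle is the bookkeeping in this last translation: the attainment point $t_0^*$ differs from $t_0$, so $u(t_0^*,z_0)$ and $u^k(t_0,z_0)=\tilde q(t_0,z_0)$ differ by exactly $k|t_0-t_0^*|$, and this additive shift in the $r$-argument of $F$ must be reconciled with the Lipschitz penalty built into $F_k$. The dual statement for the inf-convolution $v_k$ of a supersolution $v$ follows from the symmetric argument applied to a lower test function at $(t_0,z_0)$; there one verifies additionally that the shifted test inherits the plurisubharmonicity-in-$z$ condition required in the supersolution definition, which is automatic because $dd^c q_{t_0^*}=dd^c\tilde q_{t_0}$.
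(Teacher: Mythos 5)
Your overall strategy is the right one: the paper gives no proof of this lemma (it is quoted from \cite{EGZ15b}), and the translation-of-test-functions argument you describe is precisely the standard proof. Your verification of the four listed properties is correct, and the shifted test $q(s,z)=\tilde q(s+t_0-t_0^*,z)+k|t_0-t_0^*|$, together with the identities $\partial_t q(t_0^*,z_0)=\partial_t\tilde q(t_0,z_0)$ and $dd^cq_{t_0^*}(z_0)=dd^c\tilde q_{t_0}(z_0)$, is exactly the right device.

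The gap is in your final ``absorption'' step, which does not close for the $F_k$ as written in the statement. Put $r_0=\tilde q(t_0,z_0)=u^k(t_0,z_0)$ and $\delta=|t_0-t_0^*|$. The subsolution property of $u$ tested by $q$ at $(t_0^*,z_0)$ gives
$$
(dd^c\tilde q_{t_0}(z_0))^n\;\geq\; e^{\partial_t\tilde q(t_0,z_0)+F(t_0^*,z_0,\,r_0+k\delta)}\,\mu(t_0^*,z_0),
$$
so what you must show is $F(t_0^*,z_0,r_0+k\delta)\geq F_k(t_0,z_0,r_0)$. Monotonicity of $F$ in $r$ yields only $F(t_0^*,z_0,r_0+k\delta)\geq F(t_0^*,z_0,r_0)$, while taking $s=t_0^*$ in the infimum defining $F_k$ yields only $F_k(t_0,z_0,r_0)\leq F(t_0^*,z_0,r_0)+k\delta$; these two facts point in opposite directions and leave an unbridged gap of exactly $k\delta$. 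The needed pointwise inequality is in fact false for the stated $F_k$: take $F(t,z,r)=-Mt$ with $M>k$ and $t_0^*=t_0+A/k$; then $F_k(t_0,z_0,r_0)=-Mt_0-(M-k)A/k$ while $F(t_0^*,z_0,r_0+k\delta)=-Mt_0-MA/k$, which is smaller by $A$. What your argument does prove is the subsolution property with $F_k(t,z,r):=\inf_{|s-t|\leq A/k}F(s,z,r+k|s-t|)$ --- the Lipschitz penalty sitting inside the $r$-slot, where monotonicity of $F$ makes the comparison $F(t_0^*,z_0,r_0+k\delta)\geq F_k(t_0,z_0,r_0)$ immediate --- and a fortiori with the plain $\inf_{|s-t|\leq A/k}F(s,z,r)$. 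Either of these weaker versions is all that is used later (e.g.\ in the proof of Theorem \ref{the weak compa}, which only needs $F_k\geq F-\epsilon$ and $\mu_k\geq (1+\epsilon)^{-1}\mu$ on a small time window), so the placement of the penalty in the statement should be read as a misprint. With the corrected $F_k$ your proof is complete; as written, the last step is not justified.
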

\subsection{Cegrell's classes}\label{subsec Cegrell}
\medskip
	Let $\Omega$ be a bounded hyperconvex domain in $\C^n$. The following classes of
	plurisubharmonic functions were introduced by Cegrell \cite{Ceg98, Ceg04}:
	\begin{itemize}
		\item $\mathcal{E}_0(\Omega)$ is the set of  bounded psh function $u$ with $\lim_{z\rightarrow \xi} u(z)= 0, \forall \xi\in \partial \Omega$ and $\int_\Omega (dd^cu)_P^n<+\infty$.
		\item $\E (\Omega)$  is the set of all  $u\in PSH^-(\Omega)$ such that for every
		$z_0\in\Omega$, there exist a neighborhood $U$ of $z_0$ in $\Omega$ and a decreasing
		sequence $h_j\in\E_0(\Omega)$ such that $h_j\searrow u$ on $U$ and
		$\sup_j\int_{\Omega}(dd^ch_j)_P^n<\infty$.
		\item $\mathcal{F}(\Omega)$  is the set of all  $u\in PSH^-(\Omega)$ such that there exists a decreasing sequence $u_j\in \mathcal{E}_0(\Omega)$  such that $u_j\searrow u$ on $\Omega$ and $\sup_j \int_\Omega (dd^c u_j)_P^n<+\infty$.
	\end{itemize}
By \cite{Ceg04, Blo06}, $u\in\E (\Omega)$ iff $u$ is a non-positive psh function satisfying
the following property: there exists a Borel measure $\nu$ such that, if $U\subset\Omega$
and $u_j$ is a sequence of bounded psh functions in $U$ satisfying $u_j\searrow u$ then
$(dd^c u_j)_P^n$ converges weakly to $\nu$ in $U$. In this case, the Monge-Amp\`ere operator
of $u$ is defined by $(dd^c u)_P^n:=\nu$.

The class $\F (\Omega)$ satisfies the following property: For every $u\in\F (\Omega)$,
for each $z\in\partial\Omega$,
\begin{center}
	$\limsup\limits_{\Omega\ni\xi\to z}u(\xi)=0$.
\end{center}
Moreover, by \cite{NP09}, 
the comparison principle holds in the class $\F^a(\Omega)=\{u\in\F (\Omega) :
(dd^c u)_P^n$ vanishes on all pluripolar sets $\}$. 

The class $\F (\Omega)$ has been generalized as follows
\begin{Def}\label{def ceg class}
	Let $\Omega$ be a strongly pseudoconvex domain in $\C^n$. Let $\psi\in C(\partial\Omega)$.
	 Then the class $\F (\Omega, \psi)$ is defined by
	\begin{center}
		$\F(\Omega, \psi)=\{u\in PSH (\Omega): \exists v\in \F(\Omega)$
		such that $U_{\psi}\geq u\geq v+U_{\psi} \},$
	\end{center}
where $U_{\psi}$ is the unique solution to the problem 
\begin{equation}
\begin{cases}
U_{\psi}\in C(\overline{\Omega}\cap PSH(\Omega)),\\
(dd^cU_{\psi})_P^n=0,\\
U_{\psi}|_{\partial\Omega}=\psi.
\end{cases}
\end{equation}
\end{Def}
The class $\F (\Omega, \psi)$ has been used to charaterize the boundary behavior in the
Dirichlet problem for Monge-Amp\`ere equation.
\begin{The}\label{the MA Cegrell}\cite{Ceg04, Aha07}
	Let $\Omega$ be a strongly pseudoconvex domain in $\C^n$. Let $\nu$ be a positive Borel
	measure in $\Omega$ and $\psi\in C(\partial\Omega)$. If $\nu (\Omega)<\infty$ and
	$\nu$ vanishes on all pluripolar sets then there exists a unique function
	$u\in \F(\Omega, \psi)$ such that $(dd^cu)_P^n=\nu$.
\end{The}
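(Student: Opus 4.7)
The plan is to reduce to Cegrell's existence theorem for zero boundary data and glue in the maximal PSH extension $U_\psi$, then realize $u$ as a Perron envelope and handle uniqueness via the comparison principle on the subclass where the Monge--Amp\`ere measure does not charge pluripolar sets.

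First I would apply Cegrell's theorem for the case $\psi\equiv 0$ to the measure $\nu$, producing $v\in\F(\Omega)$ with $(dd^c v)_P^n=\nu$. Expanding
\[
(dd^c(v+U_\psi))^n=\sum_{k=0}^n\binom{n}{k}(dd^cv)^k\wedge(dd^cU_\psi)^{n-k}
\]
and using the non-negativity of the mixed currents on PSH functions, I get $(dd^c(v+U_\psi))_P^n\geq (dd^cv)_P^n=\nu$, while the boundary values are $\psi$. Hence $v+U_\psi$ is a bona fide subsolution. I would then form the Perron envelope
\[
u(z):=\sup\bigl\{w\in PSH(\Omega)\,:\,(dd^cw)_P^n\geq \nu,\ \limsup_{\Omega\ni z\to\xi}w(z)\leq \psi(\xi)\ \forall\xi\in\partial\Omega\bigr\}.
\]
The family is nonempty (it contains $v+U_\psi$) and every member is $\leq U_\psi$ by the maximality of $U_\psi$; thus $v+U_\psi\leq u\leq U_\psi$, which places $u$ in $\F(\Omega,\psi)$, and a Choquet-type regularization keeps $u^*$ in the family.

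The core step is to verify $(dd^cu)_P^n=\nu$: the inequality $\geq$ is built into the family, while the reverse follows from a Cegrell-type balayage--were the inequality strict on some ball $B\Subset\Omega$, one would solve the local Dirichlet problem with data $\nu|_B$ (possible since $\nu|_B$ is again finite and vanishes on pluripolar sets) and glue the result back in to produce a strictly larger competitor, contradicting maximality. For uniqueness, two solutions $u_1,u_2\in\F(\Omega,\psi)$ produce the same Monge--Amp\`ere measure $\nu$, which does not charge pluripolar sets, so the comparison principle of Nguyen--Pham in $\F^a(\Omega)$, extended to $\F(\Omega,\psi)$ via decreasing bounded approximations combined with $U_\psi$, yields $u_1=u_2$. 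The hardest part will be the balayage step: one must glue the local Dirichlet solution into a globally PSH competitor that strictly raises $u$ on a set of positive $\nu$-mass, and the pluripolar-vanishing hypothesis is exactly what ensures the local problem admits a solution comparable to $u$ on $\partial B$ so that the gluing preserves plurisubharmonicity.
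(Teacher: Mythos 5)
First, note that the paper does not actually prove this statement: Theorem \ref{the MA Cegrell} is quoted from \cite{Ceg04, Aha07}, so there is no internal proof to compare against. Your outline --- reduce to zero boundary data via Cegrell's existence theorem, observe that $v+U_\psi$ is a subsolution lying in $\F(\Omega,\psi)$, take a Perron envelope sandwiched between $v+U_\psi$ and $U_\psi$, prove $(dd^cu)_P^n=\nu$ by balayage, and get uniqueness from the Nguyen--Pham comparison principle --- is essentially the strategy of the cited reference \cite{Aha07}, so the architecture is the standard one.

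As a standalone argument, however, two steps are genuinely incomplete. (i) The Perron family is not well posed as written: $(dd^cw)_P^n$ only makes sense for $w$ in the domain of definition of the Monge--Amp\`ere operator, so you must restrict to $w\in\E(\Omega)$, or better to $v+U_\psi\le w\le U_\psi$ (which forces $w\in\F(\Omega,\psi)$); and the assertion that ``a Choquet-type regularization keeps $u^*$ in the family'' is exactly the point where you need Demailly's inequality for maxima of Cegrell-class functions together with Cegrell's convergence theorems for a.e.\ increasing sequences in $\E$ --- neither is automatic and both must be invoked explicitly. (ii) The balayage step, which you correctly flag as the hardest, is where the content of the theorem sits, and in this unbounded setting it cannot be dispatched by ``solve the local Dirichlet problem with data $\nu|_B$'': the boundary data of that local problem is $u|_{\partial B}$, an unbounded plurisubharmonic function rather than a continuous one, so its solvability is itself an instance of the theorem being proved (in a relative Cegrell class over $B$), and the comparison $\tilde u\ge u$ needed for the gluing requires the $\F^a$-type comparison principle for those relative classes on $B$. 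The route actually taken in \cite{Ceg04, Aha07} avoids balayage: one uses Cegrell's decomposition $\nu=f\,(dd^c\phi)_P^n$ with $\phi\in\E_0(\Omega)$ and $f\in L^1((dd^c\phi)_P^n)$, solves for the truncated measures $\min(f,j)\,(dd^c\phi)_P^n$ where the bounded (Kolodziej-type) theory applies, and passes to the decreasing limit. Your uniqueness paragraph is fine and matches \cite{Aha07, NP09}.
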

\section{Local regularity in time}
In this section, we assume that $\Omega$ is a bounded domain in $\C^n$.
 We will prove some results on
the local regularity in time of solution to \eqref{PMA} by using the following comparison
principle.
\begin{The}\label{the weak compa}
	Let $u$ and $v$ be, respectively, a bounded subsolution and
	a bounded supersolution to \eqref{PMA}.
	\begin{itemize}
		\item [a)]  Assume	that for every $K\Subset\Omega$,
		for every $0<R<S<T$ and $\epsilon>0$, there exists $0<\delta\ll 1$
		such that	$(1+\epsilon)f(t+s, z)\geq f(t, z)$ for all $z\in K$, $0<s<\delta$
		and $R<t<S$. Then, for every  $0<R<S<T$, for every $\epsilon>0$, 
		there exists $0<\delta\ll 1$ such that
		\begin{center}
			$u(t+s, z)<v(t, z)+\epsilon,$
		\end{center}
		for every $(t, z)\in (R, S)\times\Omega$ and $s\in (0, \delta)$.
		In particular, if either $u$ or $v$ is continuous in $t$ then $u\leq v$.
		\item[b)] Assume that for every $K\Subset\Omega$,
		for every $0<R<S<T$ and $\epsilon>0$, there exists $0<\delta\ll 1$
		such that	$(1+\epsilon)f(t, z)\geq f(t+s, z)$ for all $z\in K$, $0<s<\delta$
		and $R<t<S$. Then, for every  $0<R<S<T$, for every $\epsilon>0$, 
		there exists $0<\delta\ll 1$ such that
		\begin{center}
			$u(t-s, z)<v(t, z)+\epsilon,$
		\end{center}
		for every $(t, z)\in (R, S)\times\Omega$ and $s\in (0, \delta)$.
		In particular, if either $u$ or $v$ is continuous in $t$ then $u\leq v$.
	\end{itemize}
\end{The}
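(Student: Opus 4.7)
The plan is to prove part (a); part (b) follows by the symmetric replacement $t+s \mapsto t-s$ throughout. Fix $0<R<S<T$, $\epsilon>0$, and set $M := \sup|u|+\sup|v|+1$. I introduce the time-shifted, linearly corrected competitor $\hat w(t,z) := u(t+s, z) - \eta t$ on $(0, T-s)\times\Omega$, where $s,\eta>0$ are small parameters to be tuned. Translating the subsolution inequality for $u$ at $(t+s,z)$ and using (i) monotonicity of $F$ in $r$ together with $\eta t \geq 0$, (ii) uniform continuity of $F$ on the compact set $[0,T]\times\overline\Omega\times[-M,M]$ (yielding $F(t+s,\cdot,\cdot) \geq F(t,\cdot,\cdot) - \epsilon_F$ for $s$ small), and (iii) the hypothesis of (a) (yielding $f(t+s,z) \geq f(t,z)/(1+\epsilon_f)$ for $s$ small and $z$ in any prescribed compact $K\Subset\Omega$), I obtain
\[
(dd^c \hat w)^n \geq e^{\dt \hat w + F(t,z,\hat w) + \eta - \epsilon_F - \log(1+\epsilon_f)} f(t,z)\,dV.
\]
Choosing $\eta \geq \epsilon_F + \log(1+\epsilon_f)$ makes $\hat w$ a subsolution of the same equation as $v$ on $(R,S)\times K$.

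Next I replace $v$ by the lower time-regularization $v_k$ of Lemma \ref{rglzingint.lem}: it is Lipschitz in $t$ uniformly in $z$, and by the lemma is a supersolution of a perturbed equation with $F^k \geq F$ and $\mu^k \geq \mu$. Lemma \ref{lem.uplowesti} then gives that $v_k$ is also a supersolution of the original equation on $(A/k, T-A/k)\times\Omega$. Picking $k$ large with $A/k<R$ and $s$ small with $T-A/k-s>S$, I apply Theorem \ref{compa.the}(c) to $(\hat w, v_k)$ on the cylinder $(A/k,T-A/k-s)\times\Omega$ (using that $v_k$ is Lipschitz in $t$ uniformly in $z$) to obtain
\[
\sup(\hat w - v_k) \leq \sup_{\partial_P}(\hat w - v_k)_+.
\]

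The parabolic-boundary estimates are then routine. On the lateral part $z\in\partial\Omega$, $\hat w \leq \varphi(t+s,z) - \eta t$ and $v_k \geq \varphi(t,z) - \omega_\varphi(A/k)$, so $\hat w - v_k$ is controlled by the modulus of continuity of $\varphi$ at scale $s+A/k$. On the initial slice $t = A/k$, upper semicontinuity of $u$ at $t=0$ together with continuity of $u_0$ on the compact set $\overline\Omega$ yields $\hat w(A/k,z) \leq u_0(z) + o_{k,s}(1)$ uniformly in $z$, while lower semicontinuity of $v$ yields $v_k(A/k,z) \geq u_0(z) - o_k(1)$. Choosing all parameters so that each boundary error and $\eta S$ lie below $\epsilon/2$ gives $u(t+s,z) = \hat w + \eta t \leq v_k + \epsilon/2 + \eta S \leq v(t,z) + \epsilon$ on $(R,S)\times K$; the gap between the compact $K\Subset\Omega$ in the hypothesis and the full $\Omega$ in the conclusion is closed by exhausting $\Omega$ by compacts and noting that near $\partial\Omega$ the inequality $u(t+s,z) \leq v(t,z)+\epsilon$ is already enforced by $u\leq\varphi$, $v\geq\varphi$, and continuity of $\varphi$ in $t$. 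The ``in particular'' clause follows immediately: if $u$ is continuous in $t$, let $s\downarrow 0$ to get $u \leq v + \epsilon$, and since $\epsilon>0$ is arbitrary, $u\leq v$; the case of $v$ continuous is symmetric.

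The main obstacle is the coordinated choice of the five small parameters $\eta$, $\epsilon_f$, $\epsilon_F$, $k^{-1}$ and $s$: $\eta$ must simultaneously dominate $\epsilon_F + \log(1+\epsilon_f)$ (so that $\hat w$ is a subsolution of the same equation as $v_k$) and satisfy $\eta S \leq \epsilon/2$ (so that the linear correction does not eat the conclusion), while $A/k$ must be small enough that $v_k$ is close to $v$ on both the initial slice and the lateral boundary. A secondary subtlety is reconciling the compact-$K$ statement of the hypothesis with the full-$\Omega$ statement of the conclusion, handled by the boundary argument above.
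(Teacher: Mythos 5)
Your overall architecture matches the paper's: handle a collar of the parabolic boundary by semicontinuity, shift the subsolution in time, make one of the two functions Lipschitz in $t$ via the sup/inf-convolutions of Lemma \ref{rglzingint.lem}, and conclude with Theorem \ref{compa.the}(c). The construction of $\hat w(t,z)=u(t+s,z)-\eta t$ and the verification that it is a subsolution of the unshifted equation (using monotonicity of $F$, uniform continuity of $F$, and the forward ratio bound on $f$) are correct. However, there is a genuine gap in the step where you regularize $v$. The inf-convolution $v_k$ is a supersolution of the equation with data $F^k\geq F$ and $\mu^k(t,z)=\sup_{|s'-t|<A/k}\mu(s',z)\geq\mu(t,z)$, and Lemma \ref{lem.uplowesti} goes the \emph{wrong way} here: it upgrades a supersolution with respect to \emph{smaller} data to one with respect to larger data, so it cannot take you from $(F^k,\mu^k)$ back down to $(F,\mu)$. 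This is not a fixable bookkeeping slip under the hypothesis of part (a): that hypothesis, $(1+\epsilon)f(t+s,z)\geq f(t,z)$, prevents $f$ from \emph{dropping} as $t$ increases but allows it to jump \emph{up} (e.g.\ $f=0$ for $t<1/2$, $f=1$ for $t\geq 1/2$ satisfies it), so $\sup_{|s'-t|<A/k}f(s',z)$ can be infinitely larger than $f(t,z)$ and $v_k$ is genuinely not a supersolution of the original equation.

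The paper avoids this by regularizing the \emph{subsolution} instead: it forms $u^k$, which is a subsolution with data $\mu_k(t,z)=\inf_{|s'-t|<A/k}\mu(s',z)$, and then evaluates at $t+\delta$ with $\delta>A/k$, so that the infimum ranges only over times $s'>t$, exactly where the one-sided hypothesis of (a) gives $f(s',z)\geq f(t,z)/(1+\epsilon)$. The coordination $\delta>A/k$ between the shift and the regularization scale is the essential point your argument is missing; a symmetric fix on your side would be to shift $v_k$ \emph{backward} by more than $A/k$ so its two-sided supremum window lies entirely in the past, but as written your $v_k$ is used unshifted and the claimed supersolution property fails. A minor secondary point: when you restrict to a compact $K\Subset\Omega$ to use the ratio hypothesis, the comparison principle must then be applied on the subcylinder over $\Omega$ minus a boundary collar, so you also need $\hat w\leq v_k+\epsilon$ on the \emph{inner} boundary of that collar (supplied by the semicontinuity estimate, as in the paper), not merely on $\partial\Omega$ itself.
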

\begin{proof}
	We will prove the part a). The proof of the part b) is similar.
	
	Let $\epsilon>0$ and $0<R<S<T$. By the semi-continuity of $u, v$ and by $u\leq v$ in $\partial_P(\Omega_T)$,
	 there exists $\min\{ R, T-S\}\gg\delta_1>0$ such that
	\begin{equation}\label{eq1 proof w compa}
	u(t, z)\leq v( t, z)+\epsilon,
	\end{equation}
	for every $(t, z)\in ([0, 2\delta_1]\times\Omega)\cup ([0, S+\delta_1]\times (\overline{\Omega}\setminus\Omega_{\delta_1}))$, where
	\begin{center}
		$\Omega_{\delta_1}=\{z\in\Omega: dist (z, \partial\Omega)<\delta_1 \}$.
	\end{center}

	By the assumption, there exists 
	$\delta_2\in (0, \delta_1)$ such that
\begin{equation}\label{ineq_vol_F}
(1+\epsilon)f(t_1, z)\geq f(t_2, z) \, \, \text{and }|F(t_1, z, r)-F(t_2, z, r)|<\epsilon
\end{equation}
for every $z\in\Omega_{\delta_1}$, $r\in [-M, M]$, $t_1, t_2\in (\delta_1, S+\delta_1)$ with
	$t_2<t_1<t_2+\delta_2$. Here $M=\sup\limits_{[0,T)\times\overline{\Omega}}|u|$.
	
	Denote, for every $(t, z)\in [0, T)\times\overline{\Omega}$,
	\begin{center}
		$u^k(t, z)=\sup\{u(s, z)-k|t-s|: s\in [0, T) \},$
	\end{center}
then $u^k$ is Lipschitz in $t$. It follows from Lemma  \ref{rglzingint.lem} and \eqref{ineq_vol_F} that if $0<\delta<\delta_2/2$ and
	$k>\dfrac{(A+1)}{\delta}$, for some $A>2 osc_{\Omega_T}u$  then
	$u^k(t+\delta, z)-t\log (1+\epsilon)-(1+t)\epsilon$
	is a subsolution to 
	\begin{equation}
	e^{\dt w+F(t, z, w)}\mu(t, z)=(dd^cw)^n,
	\end{equation}
	in $(\delta_1, S)\times\Omega_{\delta_1}.$
 By using \eqref{eq1 proof w compa} and 
 the comparison principle (Theorem \ref{compa.the}), we get
	\begin{center}
		$ u^k(t+\delta, z)-t\log (1+\epsilon)-(1+t)\epsilon\leq v(t, z)$,
	\end{center}
	for every $(t, z)\in(\delta_1, S)\times\Omega.$
	
\medskip	
	Since $u^k\geq u$ and $0<\log (1+\epsilon)<\epsilon$, we have
	\begin{equation}\label{eq2 proof w compa}
		 u(t+\delta, z)-(1+2T)\epsilon\leq v(t, z),
	\end{equation}
	for every $(t, z)\in(\delta_1, S)\times\Omega_{\delta_1}.$
	
	Combining \eqref{eq1 proof w compa} and \eqref{eq2 proof w compa}, we obtain
	\begin{center}
		$u(t+s, z)<v(t, z)+(1+2T)\epsilon,$
	\end{center}
	for every $(t, z)\in (R, S)\times\Omega$ and $0<s<\delta$.
	
	 The proof is completed.
\end{proof}
In Theorem \ref{the weak compa}, if we assume that $u$ and $v$ are continuous in $t$ then
we have $u\leq v$.
As a consequence, we have the following results on the Lipschitz regularity in time of viscosity solutions. This kind of regularity is necessary to define parabolic pluripotential solutions (cf. \cite{GLZ1,GLZ2}).
\begin{Prop}\label{rgltionint1.prop}
	Assume that  $\mu$  is non-increasing in
	$t$	and $u$ is a solution to \eqref{PMA}. Suppose that
	there exists $C_0>0$ satisfying
	\begin{center}
		$\varphi (t, z)-\varphi (s, z)\geq -C_0(t-s), \quad\forall z\in\partial\Omega,
		0<s<t<T,$
	\end{center}
	and for every $m>0$, there exists $C_m>0$ satisfying
	\begin{center}
		$F(t, z, r)-F(s, z, r)\leq C_m(t-s), \quad\forall r\in [-m, m], z\in\partial\Omega,
		0<s<t<T.$
	\end{center}
	Denote $M=\sup |u|$, $N=\sup |\varphi|$. Then, for every $ 0<B<A<T$,
	\begin{equation}\label{eq rgltionint1.prop}
		\dfrac{u(B, z)-u(A, z)}{A-B}\leq \dfrac{2M}{A}+\max\{C_0,
		BC_M\}+n+N.B,	
	\end{equation}
	for all $z\in\bar{\Omega}$.
	
	In particular, $\dt u\geq -\dfrac{2M}{t}-\max\{C_0, tC_M\}-n-Nt$ in the viscosity sense.
\end{Prop}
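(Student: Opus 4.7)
I would apply the weak comparison principle of Theorem~\ref{the weak compa}(b), whose monotonicity hypothesis on $\mu$ is automatic under the non-increasing assumption, to $u$ (as a subsolution) against a supersolution $V$ that I construct on the smaller cylinder $(0,B)\times\Omega$.

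First I set $\alpha := A/B > 1$ and consider the time-rescaled function $v(t,z) := u(\alpha t, z)$, which satisfies $v(0,z)=u_0(z)$ and $v(B,z)=u(A,z)$. By Lemma~\ref{lem.changevar} applied with scaling factor $\alpha$, the normalization $u_\alpha := v/\alpha$ is a viscosity solution of the standard-form PDE with $F$ shifted to $\tilde F(t,z,r) := F(\alpha t, z, \alpha r)-n\log\alpha$ and $\mu$ shifted to $\tilde \mu(t,z) := \mu(\alpha t, z)$. The candidate supersolution is then $V(t,z) := v(t,z) + h(t)$ for a non-decreasing correction $h(t) = c_0 + c_1 t$ with $c_0, c_1 \geq 0$ to be chosen.

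The constants are picked to secure two requirements. \emph{Boundary compatibility} $V \geq u$ on $\partial_P((0,B)\times\Omega)$: at $t=0$, $V(0,z) = u_0(z) + c_0$, so $c_0 \geq 0$ suffices; on the lateral boundary, the hypothesis $\varphi(\alpha t, z)-\varphi(t,z) \geq -C_0(\alpha-1)t$ forces $c_1 \geq C_0(\alpha-1)$. \emph{Supersolution property in the interior}: combining the rescaled equation for $u_\alpha$ with Lemma~\ref{lem.uplowesti}, a direct pointwise computation shows that $V$ is a supersolution of \eqref{PMAfree} provided
\[
h'(t) \geq F(\alpha t, z, u(\alpha t, z)) - F(t, z, u_\alpha + h) - n\log\alpha + \log\bigl(\mu(\alpha t, z)/\mu(t,z)\bigr).
\]
The $\mu$-ratio is non-positive by monotonicity. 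Choosing $h$ large enough that $u_\alpha + h \geq u(\alpha t, z)$---which is ensured by $h \geq 2M(\alpha-1)/\alpha$ thanks to $|u|\leq M$, with the $N$-dependence entering via the matching of $F$-arguments near the boundary where $|\varphi| \leq N$---the monotonicity of $F$ in $r$ together with the hypothesis $F(\alpha t, z, r)-F(t,z,r)\leq C_M t(\alpha - 1)$ bounds the right-hand side by $C_M t(\alpha-1) - n\log\alpha$ up to lower-order terms. Using $(\alpha-1)B = A-B$ and optimizing, one obtains $c_0 = 2M(A-B)/A$ and $c_1 = \max\{C_0, BC_M\} + n + NB$.

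Once $V$ is verified as a supersolution that dominates $u$ on $\partial_P$, Theorem~\ref{the weak compa}(b) yields $u \leq V$ on $(0,B)\times\Omega$; evaluating at $t = B$ gives exactly the displayed inequality \eqref{eq rgltionint1.prop}. The bound on $\partial_t u$ in the ``in particular'' clause follows by letting $A \searrow B$: for any lower test function $p$ for $u$ at $(t_0,z_0)$, comparing $p$ with $u$ between $t_0-\epsilon$ and $t_0$ via the pointwise inequality forces $\partial_t p(t_0,z_0) \geq -K(t_0)$ with $K(t) = 2M/t + \max\{C_0, tC_M\} + n + Nt$. The main obstacle is the supersolution verification, specifically the absorption of the factor $1/\alpha$ in front of $\partial_t v$ in the rescaled PDE: this is where the $n$-term in the bound originates, through the Monge--Amp\`ere scaling that produces the $-n\log\alpha$ shift in the effective nonlinearity, and its delicate combination with the $F$-Lipschitz control, the monotonicity of $\mu$, and the $L^\infty$ bounds $|u|\leq M$, $|\varphi|\leq N$ is what is required to close the inequality uniformly in the test function's time-derivative.
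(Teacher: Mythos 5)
Your overall strategy (time-rescaling plus a linear correction $h$, then the weak comparison principle on $(0,B)\times\Omega$) is the right one and is essentially the paper's, but the proposal has a genuine gap in the supersolution verification, stemming from an inconsistency about whether you rescale the \emph{amplitude}. You define $v(t,z)=u(\alpha t,z)$ and take $V=v+h$; your boundary check at $t=0$ (namely $V(0,\cdot)=u_0+c_0$) and your evaluation at $t=B$ (which must give $u(A,z)+h(B)$ for the displayed inequality to follow ``exactly'') both commit you to this choice. But $v$ satisfies $(dd^c v)^n=e^{\frac1\alpha\partial_t v+F(\alpha t,z,v)}\mu(\alpha t,z)$ in the viscosity sense, so the supersolution condition for $V=v+h$ against \eqref{PMAfree} reads
$$h'(t)\ \geq\ -\Bigl(1-\tfrac1\alpha\Bigr)\partial_t v+F(\alpha t,z,v)-F(t,z,V)+\log\frac{\mu(\alpha t,z)}{\mu(t,z)},$$
and the first term $-(1-\frac1\alpha)\partial_t v$ is missing from your displayed inequality for $h'$. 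That term is precisely a lower bound on $\partial_t u$ --- the very quantity the proposition is trying to establish --- so it cannot be assumed controlled and absorbed into a finite $h'$; nor is it produced or cancelled by the $-n\log\alpha$ shift, which comes from the amplitude rescaling $(dd^c(v/\alpha))^n=\alpha^{-n}(dd^c v)^n$ and not from the time rescaling.

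The fix is what the paper actually does: compare the two fully normalized functions $u_A=\frac1A u(At,\cdot)$ and $u_B=\frac1B u(Bt,\cdot)$ on $[0,1]\times\Omega$ (equivalently, take $V=u_\alpha+h$ with $u_\alpha=v/\alpha$ on $(0,B)\times\Omega$). Both satisfy equations in which the coefficient of the time derivative in the exponent is exactly $1$, so the $\partial_t$ terms cancel in the comparison; the price is the factor $\alpha^{-n}$ (whence the $+n$ in the final bound, via $n\log(B/A)\geq -n(A-B)/B$) and the mismatch of the $r$-arguments of $F$ ($Bu_B$ versus $Au_B$), which is where $N$ and $M/(AB)$ enter. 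If you take this route you must add the step converting $u(B,z)\leq \frac{B}{A}u(A,z)+h(B)$ into the stated bound (costing a further $(A-B)M/A$), and note that the initial barrier becomes $\frac1\alpha u_0$ rather than $u_0$, so $c_0=0$ no longer suffices there. Finally, your announced constants do not reproduce \eqref{eq rgltionint1.prop} as stated ($h(B)=c_0+c_1B$ carries a factor $B$ rather than $A-B$ on the $\max\{C_0,BC_M\}+n+NB$ term), but that is secondary to the $\partial_t$-mismatch issue.
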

\begin{proof}
	The idea of the proof is similar to Theorem 4.2  in \cite{GLZ3}.
	
	We consider $u_A=\dfrac{1}{A}u(At, z)$ and $u_B=\dfrac{1}{B}u(Bt, z)$ in $[0, 1]\times\bar{\Omega}$.
	
	 By Lemma \ref{lem.changevar}, in $(0, 1)\times\Omega$, we have
	\begin{center}
		$(dd^cu_A)^n=\dfrac{1}{A^n}e^{\partial_tu_A+F(At, z, Au_A)}\mu(At, z),$
	\end{center}
	and
	\begin{center}
		$
		(dd^cu_B)^n =\dfrac{1}{B^n}e^{\partial_tu_B+F(Bt, z, Bu_B)}\mu(Bt, z)
		=\dfrac{1}{A^n}e^{\partial_tu_B+F(Bt, z, Bu_B)+n\log (B/A)}\mu (Bt, z),
		$
	\end{center}
	 in the viscosity sense.
	
	By the assumption, we have, for every $(t, z)\in (0, 1)\times\Omega$,
	\begin{center}
		$\mu (Bt, z)\geq \mu (At, z),$
	\end{center}
	and
	\begin{flushleft}
		$\begin{array}{ll}
		F(Bt, z, Bu_B)+n\log (B/A)&\geq F(At, z, Bu_B)-C_M(A-B)t-\dfrac{n(A-B)}{B}\\
		&=F(At, z, Au_B-(A-B)u_B)-(C_M+\dfrac{n}{B})(A-B)\\
		&\geq F(At, z, Au_B-(A-B)\sup u_B)-(C_M+\dfrac{n}{B})(A-B)\\
		&\geq F(At, z, Au_B-N(A-B))-(C_M+\dfrac{n}{B})(A-B).
		\end{array}$
	\end{flushleft}
	Denote
	\begin{center}
		$\tilde{u}_B=u_B-(\max\{\dfrac{C_0}{B},
		C_M\}+\dfrac{n}{B})(A-B)t-(N+\dfrac{M}{AB})(A-B).$
	\end{center}
	We have, by Lemma \ref{lem.uplowesti},
	\begin{center}
		$(dd^c\tilde{u}_B)^n\geq
		\dfrac{1}{A^n}e^{\partial_t\tilde{u}_B+F(At, z, A\tilde{u}_B)}\mu (At, z),$
	\end{center}
	in the viscosity sense in $(0, 1)\times\Omega$. Note that $\tilde{u}_B\leq u_A$ in
	$\partial_P([0, 1)\times\Omega)$. Then, by Theorem \ref{the weak compa}, 
	$\tilde{u}_B\leq u_A$
	in $[0, 1]\times\bar{\Omega}$. In particular, for every $z\in\bar{\Omega}$,
	\begin{center}
		$\dfrac{1}{A}u(A, z)\geq \dfrac{1}{B}u(B, z)
		-(\max\{\dfrac{C_0}{B},
		C_M\}+\dfrac{n}{B}+N+\dfrac{M}{AB})(A-B)$.
	\end{center}
	
	Hence,
	\begin{center}
		$\dfrac{u(B, z)-u(A, z)}{A-B}\leq \dfrac{2M}{A}+\max\{C_0,
		BC_M\}+n+NB$,	
	\end{center}
	for every $z\in\bar{\Omega}$. 
\end{proof}
By the same argument, we have 
\begin{Prop}\label{rgltionint2.prop}
	Assume that  $\mu$ is non-decreasing in
	$t$	and $u$ is a solution to \eqref{PMA}. Suppose that
	there exists $C_0>0$ satisfying
	\begin{equation}
		\varphi (t, z)-\varphi (s, z)\leq C_0(t-s), \quad\forall z\in\partial\Omega,
		0<s<t<T,
	\end{equation}
	and for every $m>0$, there exists $C_m>0$ satisfying
	\begin{center}
		$F(t, z, r)-F(s, z, r)\geq -C_m(t-s), \quad\forall r\in [-m, m], z\in\partial\Omega,
		0<s<t<T.$
	\end{center}
	Denote $M=\sup |u|$. Then, for every $0<B<A<T$, 
	\begin{equation}\label{eq rgltionint2.prop}
\dfrac{u(A, z)-u(B, z)}{A-B}\leq \dfrac{2M}{A}+\max\{C_0,
		BC_M\}+n+M.B,	
\end{equation}
	for all $z\in\bar{\Omega}$.
	
	In particular, $\dt u\leq \dfrac{2M}{t}+\max\{C_0, tC_M\}+n+Mt$ in the viscosity sense.
\end{Prop}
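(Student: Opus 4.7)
The plan is to mirror the proof of Proposition \ref{rgltionint1.prop} with the roles of $A$ and $B$ interchanged, so that instead of constructing a subsolution out of the time-rescaled function $u_B$, I construct a subsolution out of $u_A$. Specifically, for $0<B<A<T$ I set $u_A(t,z):=\tfrac{1}{A}u(At,z)$ and $u_B(t,z):=\tfrac{1}{B}u(Bt,z)$ on $[0,1]\times\bar{\Omega}$. By Lemma \ref{lem.changevar} both satisfy the same type of rescaled equations as in Proposition \ref{rgltionint1.prop}, differing by the time-argument of $\mu$, the time-argument of $F$, and the additive shift $n\log(B/A)$ in the exponent.

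Second, I would use the hypotheses dual to those of Proposition \ref{rgltionint1.prop} to compare the two right-hand sides. Since $\mu$ is now non-decreasing in $t$ and $B<A$, one has $\mu(At,z)\geq \mu(Bt,z)$; the hypothesis $F(t,z,r)-F(s,z,r)\geq -C_m(t-s)$ applied with $|r|\leq M$ gives $F(At,z,r)\geq F(Bt,z,r)-C_M(A-B)t$; the elementary inequality $n\log(A/B)\leq n(A-B)/B$ controls the volume-rescaling shift; and $\varphi(t,z)-\varphi(s,z)\leq C_0(t-s)$ controls the boundary data. Putting these together, the perturbation
\[ \tilde{u}_A(t,z):=u_A(t,z)-\Bigl(\max\bigl\{C_0/B,C_M\bigr\}+n/B\Bigr)(A-B)\,t-\Bigl(M+\tfrac{M}{AB}\Bigr)(A-B), \]
is, by Lemma \ref{lem.uplowesti} together with the monotonicity of $F$ in its last argument, a viscosity subsolution to the equation satisfied by $u_B$ in $(0,1)\times\Omega$. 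The corresponding parabolic-boundary inequality $\tilde{u}_A\leq u_B$ on $\partial_P((0,1)\times\Omega)$ is built into the choice of constants.

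Third, I would invoke Theorem \ref{the weak compa}; specifically part (a), since when $\mu$ is non-decreasing in $t$ the condition $(1+\epsilon)f(t+s,z)\geq f(t,z)$ is automatic. This yields $\tilde{u}_A\leq u_B$ on $[0,1]\times\bar{\Omega}$. Evaluating at $t=1$ and clearing denominators produces an inequality of the form $Bu(A,z)\leq Au(B,z)+AB\cdot(A-B)\cdot(\text{constants})$; dividing by $(A-B)$ and absorbing the term $u(A,z)/A$ via $|u|\leq M$ gives \eqref{eq rgltionint2.prop}. The viscosity bound $\partial_t u\leq \tfrac{2M}{t}+\max\{C_0,tC_M\}+n+Mt$ follows at once.

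The main obstacle is purely bookkeeping. Because I am building the subsolution from $u_A$ rather than from $u_B$, every sign in the perturbation constants, every direction in the monotonicity inequalities for $\mu$ and $F$, and the choice of branch (a) versus (b) in Theorem \ref{the weak compa} must all be coordinated; any one of them flipped will reverse the final inequality. Once that coordination is fixed correctly, the argument is a verbatim dual of the proof of Proposition \ref{rgltionint1.prop}, which is the sense in which the paper's remark \textit{``by the same argument''} is to be understood.
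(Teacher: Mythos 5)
Your proposal is correct and is essentially the paper's intended argument: the paper gives no separate proof here beyond the phrase ``by the same argument,'' and your dualization of the proof of Proposition \ref{rgltionint1.prop} coordinates all the signs correctly --- the direction of $\mu(At,z)\geq\mu(Bt,z)$, the lower bound $F(At,z,r)\geq F(Bt,z,r)-C_M(A-B)t$, the bound $n\log(B/A)\geq -n(A-B)/B$, the use of $\inf u\geq -M$ (which is why $M$ replaces $N$ in the final constant), and branch (a) of Theorem \ref{the weak compa}. The only cosmetic difference is that you perturb $u_A$ downward into a subsolution of the $u_B$-equation rather than perturbing $u_B$ upward into a supersolution of the $u_A$-equation, which is an equivalent dual yielding the same constants.
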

Here, the difference between the right-hand sides of \eqref{eq rgltionint1.prop}
 and \eqref{eq rgltionint2.prop}
 is due to the difference between the upper and lower estimates for $F(At, z, Au_B-(A-B)u_B)$.
\begin{Cor}
	Assume that $\mu, F, \varphi$ satisfy the conditions in Proposition
	 \ref{rgltionint2.prop}. If there exists
	 an open set $U\subset\Omega$ such that
	 $u_0$ is not a maximal plurisubharmonic function in $U$ and
	$\lim_{t\to 0^+}t\log\sup_{z\in U}f(t, z)=-\infty$
	 then \eqref{PMA} does not admit a solution.
\end{Cor}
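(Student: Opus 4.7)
The plan is to argue by contradiction: suppose that a (necessarily bounded) viscosity solution $u$ to \eqref{PMA} exists, and set $M:=\sup_{\Omega_T}|u|$. Proposition \ref{rgltionint2.prop} yields the one-sided time estimate
$$
\partial_t u(t,z) \leq K(t):=\dfrac{2M}{t}+\max\{C_0,tC_M\}+n+Mt
$$
in the viscosity sense on $(0,T)\times\Omega$. Combined with the equation $(dd^c u)^n = e^{\partial_t u+F(t,z,u)} f(t,z)\,dV$, one expects the elliptic sliced bound
$$
(dd^c u(t,\cdot))_P^n \leq e^{K(t)+C_F}\,f(t,z)\,dV\quad \text{on }U,
$$
for each $t\in(0,T)$, where $C_F:=\sup_{[0,T]\times\bar{\Omega}\times[-M,M]}F$. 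To make this rigorous, the plan is to apply the inf-convolution $u_k(t,z):=\inf_{s\in[0,T)}[u(s,z)+k|s-t|]$ (Lemma \ref{rglzingint.lem}) in order to obtain a supersolution of a regularized equation which is Lipschitz in $t$; then one freezes $t=t_0$ by extending any elliptic lower test function $\tilde q(z)$ for $u_k(t_0,\cdot)$ at $z_0$ (with $dd^c\tilde q\geq 0$) to the parabolic lower test function $q(t,z):=\tilde q(z)+L(t-t_0)$, where $L$ is the Lipschitz constant of $u_k$ in $t$; finally, one sends $k\to\infty$ via the stability Lemma \ref{lem.seq}. The upgrade from the viscosity inequality to the pluripotential one is provided by the standard comparison for bounded plurisubharmonic functions (cf.\ \cite{EGZ,EGZ15}), together with the fact that viscosity subsolutions of $(dd^c u)^n\geq 0$ are psh in $z$.

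Once the sliced inequality is in place, the hypothesis $\lim_{t\to 0^+}t\log\sup_{z\in U}f(t,z)=-\infty$ yields, for any $A>2M$, that $f(t,z)\leq e^{-A/t}$ on $U$ for all sufficiently small $t>0$; consequently
$$
(dd^c u(t,\cdot))_P^n \leq e^{(2M-A)/t+O(1)}\,dV\quad \text{on }U,
$$
and the right-hand side tends to $0$ uniformly as $t\to 0^+$. Since $u$ is continuous with $u(0,\cdot)=u_0$, $u(t,\cdot)\to u_0$ uniformly on compact subsets of $\Omega$; Bedford--Taylor continuity of the Monge--Amp\`ere operator under uniform convergence of bounded plurisubharmonic functions then gives $(dd^c u(t,\cdot))_P^n\to(dd^c u_0)_P^n$ weakly as measures. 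Combining the two limits forces $(dd^c u_0)_P^n=0$ on $U$, i.e.\ $u_0$ is maximal in $U$, contradicting the hypothesis.

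The main obstacle is the slicing step: passing from the viscosity inequality (which holds formally along the equation and the one-sided estimate on $\partial_t u$) to a genuine pluripotential upper bound on $u(t,\cdot)$ for each fixed $t$. This requires the inf-convolution in time to introduce Lipschitz dependence in $t$, so that elliptic test functions lift to parabolic ones, and then the stability Lemma \ref{lem.seq} to pass to the limit and return to $u$ itself.
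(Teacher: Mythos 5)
The paper states this corollary without proof, so there is no written argument to compare against; your reconstruction follows what is clearly the intended route. Namely: Proposition \ref{rgltionint2.prop} gives $\partial_t u\leq 2M/t+O(1)$ in the viscosity sense, the supersolution inequality then forces $(dd^c u(t,\cdot))^n\leq e^{2M/t+O(1)}f(t,\cdot)\,dV$ on each slice, the hypothesis $t\log\sup_U f(t,\cdot)\to-\infty$ applied with $A>2M$ makes this bound tend to $0$ uniformly on $U$, and since $u(t,\cdot)\to u_0$ locally uniformly as $t\to 0^+$ (via the subbarrier of Proposition \ref{prop.subbarrier} and the comparison principle, or Proposition \ref{holder_t_1}), Bedford--Taylor continuity yields $(dd^cu_0)_P^n=0$ on $U$, contradicting non-maximality. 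Your quantitative bookkeeping is correct, and you rightly identify the slicing step as the only real technical content; it is the same step the paper leaves implicit in Corollary \ref{cor exist admi} and in the assertion of Theorem \ref{the exist1} that $(u(t,z),\mu(t,z))$ is admissible for every $t$.

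The one place where your proposed mechanism does not work as written is the extension of test functions. If $u_k$ is $k$-Lipschitz in $t$ and $\tilde q$ is an elliptic lower test function for $u_k(t_0,\cdot)$ at $z_0$, then $q(t,z)=\tilde q(z)+L(t-t_0)$ satisfies $q\leq u_k$ only for $t\leq t_0$; for $t>t_0$ it may exceed $u_k$, and in fact no function of the form $\tilde q(z)+\ell(t)$ with $\ell$ of class $C^1$ and $\ell(t_0)=0$ can lie below $\tilde q(z)-k|t-t_0|$ on both sides of $t_0$ unless $k=0$. So $q$ is not a lower test function in the two-sided sense of the paper's definition in Section 2. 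The standard repair is to use the one-sided-in-time characterization of parabolic viscosity supersolutions (test functions defined on $\{t\leq t_0\}$ suffice; see \cite{CIL92, IS13}), under which your $q$ is admissible and $\partial_t q(t_0,z_0)=L$ enters the inequality exactly as you intend; alternatively one can quote the corresponding slicing lemma from \cite{EGZ15b}. A smaller point: rather than upgrading the elliptic viscosity inequality $(dd^cu(t,\cdot))^n\leq\epsilon_t\,dV$ to a pluripotential one and passing to weak limits of measures, it is cleaner to compare $u(t,\cdot)$ on a ball $B\Subset U$ with the Bedford--Taylor solution of $(dd^cw)^n=\epsilon_t\,dV$, $w=u(t,\cdot)$ on $\partial B$, which shows $u(t,\cdot)$ is within $C\epsilon_t^{1/n}$ of the maximal psh function with its boundary values; letting $t\to0^+$ then gives maximality of $u_0$ on $B$ directly. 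Neither point affects the validity of your strategy.
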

Combining Proposition \ref{rgltionint1.prop} and Proposition \ref{rgltionint2.prop}, we have
\begin{Cor}\label{rgltionint.cor}
	Assume that  $\mu$ is independent of $t$
	and $u$ is a solution to \eqref{PMA}. Suppose that
	there exists $C_0>0$ satisfying
	\begin{center}
		$|\varphi (t, z)-\varphi (s, z)|\leq C_0|t-s|, \quad\forall z\in\partial\Omega,
		s,t\in [0, T),$
	\end{center}
	and for every $m>0$, there exists $C_m>0$ satisfying
	\begin{center}
		$|F(t, z, r)-F(s, z, r)|\leq C_m|t-s|, \quad\forall r\in [-m, m], z\in\partial\Omega,
		s,t \in [0, T).$
	\end{center}
	Denote $M=\sup |u|$. Then, for every $0<B<A<T$, 
	\begin{center}
		$\dfrac{|u(A, z)-u(B, z)|}{A-B}\leq \dfrac{2M}{A}+\max\{C_0,
		BC_M\}+n+M.B$,	
	\end{center}
	for all $z\in\bar{\Omega}$.
	
	In particular, $|\dt u|\leq \dfrac{2M}{t}+\max\{C_0, tC_M\}+n+Mt$ in the viscosity sense.
\end{Cor}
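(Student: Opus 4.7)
The corollary is a direct two-sided consequence of Propositions \ref{rgltionint1.prop} and \ref{rgltionint2.prop}: the plan is simply to verify that the three hypotheses of the corollary each imply, in one-sided form, the hypotheses of both propositions, and then to combine the two resulting inequalities.

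The key observations are that (a) ``$\mu$ independent of $t$'' means $\mu$ is simultaneously non-increasing and non-decreasing in $t$, so both monotonicity hypotheses are satisfied; (b) the two-sided Lipschitz bound $|\varphi(t,z)-\varphi(s,z)| \le C_0|t-s|$ on the parabolic boundary contains both the lower bound $\varphi(t,z)-\varphi(s,z) \ge -C_0(t-s)$ required by Proposition \ref{rgltionint1.prop} and the upper bound $\varphi(t,z)-\varphi(s,z) \le C_0(t-s)$ required by Proposition \ref{rgltionint2.prop}; (c) the same splitting applies to the Lipschitz hypothesis on $F$ in the $t$-variable.

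Once these observations are in place, I would apply Proposition \ref{rgltionint2.prop} to produce the upper bound
\[
\frac{u(A,z)-u(B,z)}{A-B} \le \frac{2M}{A}+\max\{C_0, BC_M\}+n+MB,
\]
and Proposition \ref{rgltionint1.prop} to produce the matching lower bound
\[
\frac{u(A,z)-u(B,z)}{A-B} \ge -\frac{2M}{A}-\max\{C_0, BC_M\}-n-NB,
\]
for every $0<B<A<T$ and $z\in\overline{\Omega}$, where $N=\sup_{[0,T)\times\partial\Omega}|\varphi|$. Since $u=\varphi$ on the parabolic boundary, $N \le M$, so $NB \le MB$ and both inequalities can be written with a common constant; combining them yields the two-sided estimate in the statement.

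The ``in particular'' viscosity bound on $|\partial_t u|$ is then immediate by pairing the two one-sided viscosity inequalities on $\partial_t u$ contained in the source propositions (again absorbing $N$ into $M$). There is no real obstacle: once the monotonicity and Lipschitz hypotheses are split into their one-sided pieces, the result is a purely mechanical combination of Propositions \ref{rgltionint1.prop} and \ref{rgltionint2.prop}.
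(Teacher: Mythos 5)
Your proposal is correct and coincides with the paper's own argument: the corollary is stated immediately after the sentence ``Combining Proposition \ref{rgltionint1.prop} and Proposition \ref{rgltionint2.prop}, we have,'' so the intended proof is exactly the mechanical splitting of the two-sided hypotheses into the one-sided hypotheses of the two propositions and the pairing of the resulting inequalities. Your additional remark that $N=\sup|\varphi|\leq\sup|u|=M$ (since $u=\varphi$ on the parabolic boundary) correctly justifies writing both bounds with the common constant $M\cdot B$ appearing in the statement.
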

We then have the following corollary. 
\begin{Cor}\label{cor exist admi}
	Assume that $\mu$, $F$ and $\varphi$ are independent of $t$. If $\eqref{PMA}$ admits
	a viscosity solution then for every $0<t<T$, there exists $C_t>0$ such that
	$(dd^c u (t, z))^n\leq C_t\mu (z)$ in the viscosity sense in $\Omega$. In particular,
	$(u_0, \mu)$ is admissible.	
\end{Cor}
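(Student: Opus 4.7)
The corollary splits into two parts: (a) for each $t \in (0, T)$, $(dd^c u(t, \cdot))^n \leq C_t \mu$ in the viscosity sense in $\Omega$, and (b) admissibility of $(u_0, \mu)$. Part (b) follows readily from (a): by the comparison principle (Theorem \ref{compa.the}), any viscosity solution $u$ coincides with both of its semicontinuous envelopes and is therefore continuous on $[0, T) \times \bar \Omega$ with $u(0, \cdot) = u_0$. For any $\epsilon > 0$, uniform continuity at $t = 0$ lets us pick $t_\epsilon > 0$ with $\sup_{\bar \Omega}|u(t_\epsilon, \cdot) - u_0| < \epsilon/2$; setting $u_\epsilon(z) := u(t_\epsilon, z) + \epsilon/2 \in C(\bar \Omega)$, we have $u_0 \leq u_\epsilon \leq u_0 + \epsilon$ and $(dd^c u_\epsilon)^n = (dd^c u(t_\epsilon, \cdot))^n \leq C_{t_\epsilon} \mu$ in the viscosity sense by (a), verifying admissibility in the sense of Definition \ref{def admissible}.

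For part (a), since $\mu$, $F$ and $\varphi$ are independent of $t$, Corollary \ref{rgltionint.cor} (applied with $C_0 = C_m = 0$) gives the pointwise Lipschitz estimate $|u(t, z) - u(s, z)| \leq K|t - s|$, uniform in $z \in \bar \Omega$, for $s, t$ in a small neighborhood of $t_0 \in (0, T)$, where $K = K(t_0)$ is finite. Set $C_{t_0} := e^{K + \sup F}$ with $M := \sup|u|$ and $\sup F := \sup_{(z,r) \in \bar \Omega \times [-M, M]} F(z, r)$. Fix $z_0 \in \Omega$ and let $q \in C^2$ be a lower elliptic test function of $u(t_0, \cdot)$ at $z_0$ with $dd^c q(z_0) \geq 0$; after replacing $q$ by $q + \eta|z|^2$ and passing $\eta \to 0$ at the end, we may assume $dd^c q > 0$ strictly on some $\bar B(z_0, r)$.

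The main obstacle is that the naive constant-in-$s$ lift $Q(s, z) := q(z)$ fails to be a parabolic lower test of $u$ at $(t_0, z_0)$: the Lipschitz bound $u(s, z) \geq u(t_0, z) - K|s - t_0|$ produces a V-shape corner at $s = t_0$ which no $C^{1}$-in-$s$ function touching $u$ from below at $(t_0, z_0)$ can lie under. The remedy is the doubling/penalty method. On the parabolic box $[t_0 - \tau, t_0 + \tau] \times \bar B(z_0, r)$ introduce the auxiliary functional
\[
\Psi(s, z) := u(s, z) - q(z) + \tfrac{L}{2}(s - t_0)^2 - \alpha(s - t_0) + \beta|z - z_0|^2,
\]
with $\alpha > K$ and $\beta > 0$ smaller than the least eigenvalue of $dd^c q$ on $\bar B(z_0, r)$; a short boundary estimate (using $u - q \geq -K|s - t_0|$ on the box, which follows from the lower-test hypothesis) shows that for $L$ sufficiently large $\Psi > 0$ on the parabolic boundary while $\Psi(t_0, z_0) = 0$, so the minimum is attained at an interior point $(s^*, z^*)$. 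This interior minimum furnishes a $C^{(1,2)}$ parabolic lower test $Q$ of $u$ at $(s^*, z^*)$ with $\partial_s Q(s^*, z^*) = a := \alpha - L(s^* - t_0)$ and $dd^c_z Q(z^*) = dd^c q(z^*) - \beta I > 0$; a Dini-derivative (Clarke-subdifferential) argument applied to the $K$-Lipschitz function $s \mapsto u(s, z^*)$ at its interior minimum forces $|a| \leq K$. Applying the parabolic supersolution property of $u$ yields
\[
(dd^c q(z^*) - \beta I)^n \leq e^{a + F(z^*, u(s^*, z^*))} \mu(z^*) \leq C_{t_0}\, \mu(z^*).
\]
The energy estimate $\Psi(s^*, z^*) \leq 0$ combined with $|a| \leq K$ implies $|s^* - t_0| \leq (K + \alpha)/L$ and $\beta|z^* - z_0|^2 \leq (K + \alpha)^2/L$, so $(s^*, z^*) \to (t_0, z_0)$ as $L \to \infty$. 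Passing to the limit $L \to \infty$, then $\beta \to 0$, and finally $\eta \to 0$, delivers $(dd^c q(z_0))^n \leq C_{t_0}\mu(z_0)$, completing part (a). The main technical difficulty lies precisely in this parabolic-to-elliptic viscosity transfer: a direct lift of elliptic test functions to parabolic ones is blocked by the Lipschitz V-shape obstruction, and the penalty / doubling-variables method is the natural tool to circumvent it.
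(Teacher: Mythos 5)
Your proof is correct and follows the route the paper intends: Corollary \ref{rgltionint.cor} supplies the locally-in-time Lipschitz bound $|u(t,z)-u(s,z)|\le K|t-s|$, and the supersolution property of $u$ is then transferred to each time slice; the paper states this as an immediate corollary without proof, whereas you supply the (genuinely needed) penalty-method justification of the slicing step, which is the standard argument showing that one-sided-in-time lower test functions suffice for parabolic supersolutions. One small slip: replacing $q$ by $q+\eta|z|^2$ destroys the property $q\le u(t_0,\cdot)$, so it is not a legitimate perturbation of a lower test function; but this step is unnecessary --- if $(dd^cq(z_0))^n=0$ the desired inequality is trivial, and otherwise $dd^cq(z_0)>0$, so by continuity $dd^cq>0$ on a small ball around $z_0$ and you may choose $\beta$ there directly. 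With that correction the argument --- including the Clarke-derivative bound $|a|\le K$ at the interior minimum, the localization $(s^*,z^*)\to(t_0,z_0)$ as $L\to\infty$, and the deduction of admissibility from the uniform continuity of $u$ at $t=0$ --- goes through and matches what the paper leaves implicit.
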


\section{The existence of solution}
In this section, we prove Theorem \ref{mainexistence} and Theorem \ref{mainadmissible}.
 Assume that $\Omega\subset \C^n$ is  a smooth bounded strongly pseudoconvex domain. Consider the  Cauchy-Dirichlet problem spelt out in the Introduction
\begin{equation}\label{PMA_sol}
\begin{cases}
e^{\dt u+F(t, z, u)}\mu(t, z)=\MAu\qquad\mbox{ in }\qquad \Omega_T,\\
u=\varphi\qquad\mbox{in}\qquad [0, T)\times\partial\Omega,\\
u(0, z)=u_0(z)\qquad\mbox{in}\quad\bar{\Omega}. 
\end{cases}
\end{equation}
\subsection{The construction of $\epsilon$-subbarriers and $\epsilon$-superbarriers}
The $\epsilon$-subbarriers and $\epsilon$-superbarriers are important for proving the existence of solutions to the problem \eqref{PMA_sol}. In \cite{EGZ15b}, Eyssidieux-Guedj-Zeriahi have given a method to construct
$\epsilon$-subbarriers and $\epsilon$-superbarriers in the case where $\phi$ and $f$ are independent of $t$
(see \cite[Proposition 5.9]{EGZ15b}).
In this section, we introduce several results about the construction of $\epsilon$-subbarriers and $\epsilon$-superbarriers in the general case.
\begin{Prop}\label{prop.subbarrier}
	For all $\epsilon>0$,
	there exists a continuous $\epsilon$-subbarrier for \eqref{PMA}
	 which is Lipschitz in $t$. 
\end{Prop}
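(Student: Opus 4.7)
The plan is to build $v=\max\{v_1,v_2\}$, where $v_1$ handles the initial datum $u_0$ and $v_2$ handles the lateral datum $\varphi$. I will arrange that both $v_1,v_2$ are continuous viscosity subsolutions of \eqref{PMAfree}, Lipschitz in $t$; the maximum then inherits both properties, using that a maximum of two viscosity subsolutions is a viscosity subsolution (a direct consequence of the test-function definition, compatible with Lemma \ref{lem perron EGZ15b}). The constants will be chosen so that on $\{0\}\times\overline\Omega$ and on $[0,T)\times\partial\Omega$ the max lies in the required $\epsilon$-tube around $u_0$ and $\varphi$ respectively.

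For the preparation, fix a smooth strictly plurisubharmonic defining function $\rho\in C^\infty(\overline\Omega)\cap PSH(\Omega)$ with $\rho|_{\partial\Omega}=0$, $\rho<0$ on $\Omega$, normalized so that $(dd^c\rho)^n\geq dV$ (a standard consequence of strong pseudoconvexity). Produce $\tilde\varphi_\epsilon\in C([0,T]\times\overline\Omega)$ that is $C^2$ in $z$ and uniformly Lipschitz in $t$, with $\varphi-\epsilon/3\leq\tilde\varphi_\epsilon\leq\varphi-\epsilon/4$ on $[0,T]\times\partial\Omega$, obtained by a Tietze extension followed by convolutions in $t$ and $z$. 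Pick $\tau\in(0,T)$ such that $\omega_\varphi(\tau)\leq\epsilon/4$, where $\omega_\varphi$ is the modulus of continuity of $\varphi$.

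The boundary piece will be
\[
v_2(t,z)=\tilde\varphi_\epsilon(t,z)+A(\rho(z)-\delta),\qquad A\delta=\epsilon/4,
\]
with $A$ large. Since $v_2\leq\sup\varphi$ and $F$ is non-decreasing in $r$, both $\partial_tv_2$ and $F(t,z,v_2)$ are bounded independently of $A$; meanwhile $(dd^cv_2)^n\geq\tfrac12 A^n\,dV$ once $A$ dominates the $z$-Hessian bound of $\tilde\varphi_\epsilon$, so $v_2$ is a classical subsolution for $A$ large, clearly Lipschitz in $t$. The initial piece will be
\[
v_1(t,z)=u_0(z)+\delta_1\rho(z)-\epsilon/4-Ct,\qquad \delta_1=\epsilon/(4\sup_\Omega|\rho|),
\]
so that $|\delta_1\rho|\leq\epsilon/4$. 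For any $C^{1,2}$ upper test function $q$ for $v_1$ at $(t_0,z_0)\in\Omega_T$, plurisubharmonicity of $u_0$ forces $dd^cq_{t_0}(z_0)\geq\delta_1\,dd^c\rho(z_0)$, and hence $(dd^cq_{t_0})^n(z_0)\geq\delta_1^n\,dV$; using $v_1\leq u_0\leq\sup u_0$ together with monotonicity of $F$ in $r$, the right-hand side of the subsolution inequality is bounded by $e^{-C+F_0}\|f\|_\infty\,dV$, with $F_0=\sup_{[0,T]\times\overline\Omega}F(\cdot,\cdot,\sup u_0)$ finite. Choosing $C$ large enough (and also arranging $C\tau\geq\omega_\varphi(T)$) makes $v_1$ a viscosity subsolution, Lipschitz in $t$.

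To verify the parabolic boundary conditions for $v=\max\{v_1,v_2\}$, I check the two sides separately. At $t=0$ on $\partial\Omega$, $v_1=u_0-\epsilon/4$ and $v_2\leq u_0-\epsilon/2$, so $v=u_0-\epsilon/4\in[u_0-\epsilon,u_0]$; in the interior at $t=0$, $v_2$ is very negative (from $A(\rho-\delta)$ with $A$ large), so $v=v_1\in[u_0-\epsilon/2,u_0-\epsilon/4]$, again in the tube. On $[0,T)\times\partial\Omega$, $v_2(t,z)=\tilde\varphi_\epsilon(t,z)-\epsilon/4$ lies in $[\varphi(t,z)-7\epsilon/12,\varphi(t,z)-\epsilon/2]$, while $v_1(t,z)=\varphi(0,z)-\epsilon/4-Ct$ is dominated by $\varphi(t,z)$ thanks to the choice of $C$ (splitting on $t\leq\tau$ where $\omega_\varphi(t)\leq\epsilon/4$, and $t>\tau$ where $Ct\geq\omega_\varphi(T)$); hence $\varphi-\epsilon\leq v\leq\varphi$ on the lateral boundary. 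The main obstacle is the construction of the initial piece: since $u_0$ is merely continuous plurisubharmonic, its Monge-Amp\`ere mass may vanish, so the bulk subsolution inequality for $v_1$ relies crucially on the strictly plurisubharmonic correction $\delta_1\rho$ coupled with the drift $-Ct$; the non-Lipschitz behavior of $\varphi$ in $t$ forces this same drift even though $v_1$ is otherwise built from the initial data.
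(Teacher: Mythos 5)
Your proposal is correct and follows essentially the same route as the paper: an initial-data piece of the form $u_0+(\text{small strictly psh correction})-Ct$ and a lateral piece of the form $(\text{smooth approximation of }\varphi)-\text{const}+A\rho$, glued by taking the maximum of the two subsolutions. The only cosmetic difference is that the paper takes $\varphi_\epsilon$ smooth in $(t,z)$ and chooses $C$ large enough that $\underline u_1\le\varphi$ on the whole lateral boundary without splitting at a time $\tau$, but your verification (including the touching argument giving $dd^cq_{t_0}(z_0)\ge\delta_1\,dd^c\rho(z_0)$ and $\partial_tq=-C$) matches the intended argument.
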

\begin{proof}
	Let $\rho\in C^2(\bar{\Omega})\cap PSH(\Omega)$ such that $\rho|_{\partial\Omega}=0$,
	$\nabla\rho|_{\partial\Omega}\neq 0$
	and $dd^c\rho\geq dd^c |z|^2$. Denote $c=\sup\limits_{\Omega}(-\rho)$. Then, there exists
	$M_1\gg 1$ such that the function
	\begin{center}
		$\underline{u}_1=u_0+\dfrac{\epsilon (\rho-c)}{2c}-M_1t,$
	\end{center}
is a subsolution to \eqref{PMA_sol} satisfying $\underline{u}_1\leq \varphi$ in 
$[0, T]\times\partial\Omega$.

Let $\varphi_{\epsilon}\in C^{\infty}(\R\times\C^n)$ such that
\begin{center}
	$\varphi-\dfrac{\epsilon}{2} \leq\varphi_{\epsilon}\leq\varphi,$
\end{center}
in $[0, T]\times\partial\Omega$. Then, there exists $M_2\gg 1$ such that the function
\begin{center}
	$\underline{u}_2=\varphi_{\epsilon}-\dfrac{\epsilon}{2}+M_2\rho$,
\end{center}
is a subsolution to \eqref{PMA_sol} satisfying $\underline{u}_2\leq u_0$ in 
$\{0\}\times\bar{\Omega}$.

Now, we define $\underline{u}=\max\{\underline{u}_1, \underline{u}_2\}$. It is clear that
$\underline{u}$ is a continuous $\epsilon$-subbarrier for \eqref{PMA_sol}.
	\end{proof}
\begin{Lem}\label{lem short long}
	\begin{itemize}
		\item [1)] Let $0<\epsilon_0<T$.
		Let $u$ be a bounded continuous subsolution to \eqref{PMA_sol} in
		$[0, \epsilon_0)\times\overline{\Omega}$. Then, for every $0<\epsilon<\epsilon_0$,
		there exists a continuous subsolution $\tilde{u}$ to \eqref{PMA_sol} on $[0, T)\times\Omega$ such that
		\begin{center}
			$\begin{cases}
			\tilde{u}\leq u \mbox{ in } [0, \epsilon_0)\times\overline{\Omega},\\
			\tilde{u}= u \mbox{ in } [0, \epsilon)\times\overline{\Omega}.
			\end{cases}$
		\end{center}
		Moreover, if $u$ is Lipschitz in $t$ then $\tilde{u}$ is also 
		Lipschitz in $t$.
	\item [2)] Let $0<\epsilon_0<T$.
	Let $v$ be a bounded continuous supersolution to \eqref{PMA_sol} in
	$[0, \epsilon_0)\times\overline{\Omega}$. Then, for every $0<\epsilon<\epsilon_0$,
	there exists a continuous supersolution $\tilde{v}$ to \eqref{PMA_sol} on $[0, T)\times\Omega$  such that
\begin{center}
	$\begin{cases}
	\tilde{v}\geq v \mbox{ in } [0, \epsilon_0)\times\overline{\Omega},\\
	\tilde{v}= v \mbox{ in } [0, \epsilon)\times\overline{\Omega}.
	\end{cases}$
\end{center}
	Moreover, if $v$ is Lipschitz in $t$ then $\tilde{v}$ is also 
	Lipschitz in $t$.
	\end{itemize}
\end{Lem}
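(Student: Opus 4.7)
The plan is to extend $u$ (resp.\ $v$) to $[0,T)\times\overline\Omega$ by gluing it with a globally defined subsolution (resp.\ supersolution) of \eqref{PMA_sol}, in such a way that the modification is trivial on $[0,\epsilon)$ and stays below $u$ (above $v$) on $[0,\epsilon_0)$. The key building block is the following shift principle: if $u$ is a viscosity subsolution to \eqref{PMA_sol} and $\lambda\colon[0,T)\to[0,\infty)$ is a continuous, non-decreasing function, then $u-\lambda$ is again a subsolution, because $\partial_t(u-\lambda)\leq\partial_tu$ and $F(t,z,u-\lambda)\leq F(t,z,u)$ by the monotonicity of $F$ in $r$. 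The non-smoothness of $\lambda$ is handled by the usual approximation argument (smooth non-decreasing $\lambda_j\searrow\lambda$ together with Lemma~\ref{lem.seq}), exactly as in the first half of Lemma~\ref{lem modify}. Dually, $v+\lambda$ is a supersolution whenever $v$ is one.

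For part~(1), let $\underline u$ be the continuous $\epsilon$-subbarrier on $[0,T)\times\overline\Omega$ produced by Proposition~\ref{prop.subbarrier}, which is already Lipschitz in $t$. Since $u$ and $\underline u$ are bounded on $[0,\epsilon_0)\times\overline\Omega$, choose $M>0$ so large that $\underline u-M\leq u$ there, and pick a non-negative, non-decreasing Lipschitz function $\lambda$ on $[0,T)$ vanishing on $[0,\epsilon]$ and large enough on $[\epsilon_0-\eta,\epsilon_0)$, for some $\eta\in(0,\epsilon_0-\epsilon)$, to force $u-\lambda\leq\underline u-M-1$ on that sub-strip. Define
$$
\tilde u(t,z):=\begin{cases}\max\{u(t,z)-\lambda(t),\;\underline u(t,z)-M\}&\text{if }t\in[0,\epsilon_0),\\ \underline u(t,z)-M&\text{if }t\in[\epsilon_0,T).\end{cases}
$$
Both arguments of the maximum are subsolutions (the shift principle applied to $u$, and the monotonicity of $F$ in $r$ applied to $\underline u-M$), so their maximum is a subsolution on $(0,\epsilon_0)\times\Omega$. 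The plateau condition on $\lambda$ forces $\tilde u\equiv\underline u-M$ on the two-sided neighborhood $(\epsilon_0-\eta,T)\times\overline\Omega$ of $t=\epsilon_0$, so the piecewise definition glues continuously at $t=\epsilon_0$ and yields a subsolution on all of $(0,T)\times\Omega$. The data inequalities $\tilde u\leq\varphi$ on $[0,T)\times\partial\Omega$ and $\tilde u(0,\cdot)\leq u_0$ on $\overline\Omega$ follow from the corresponding properties of $u$ and $\underline u$ together with $\lambda\geq0$; the equality $\tilde u=u$ on $[0,\epsilon)\times\overline\Omega$ holds because $\lambda\equiv0$ and $\underline u-M\leq u$ there; the bound $\tilde u\leq u$ on $[0,\epsilon_0)\times\overline\Omega$ is built in; and Lipschitz-in-$t$ regularity of $\tilde u$ (when $u$ is Lipschitz in $t$) is inherited from $u$, $\underline u$ and $\lambda$.

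Part~(2) is entirely symmetric. A globally defined continuous supersolution of \eqref{PMA_sol} on $[0,T)\times\overline\Omega$ is provided by any sufficiently large constant $C\geq\max\{\sup\varphi,\sup u_0,\sup_{[0,\epsilon_0)\times\overline\Omega}v\}$: constants are trivially supersolutions, because any lower test function $q$ satisfying $dd^cq\geq 0$ at the touching point must also have $dd^cq\leq 0$ and $\partial_tq=0$ there, so the defining inequality reduces to $0\leq e^{F(t_0,z_0,C)}\mu(t_0,z_0)$. One then sets $\tilde v=\min\{v+\lambda,\,C\}$ on $[0,\epsilon_0)\times\overline\Omega$ and $\tilde v\equiv C$ on $[\epsilon_0,T)\times\overline\Omega$, with a Lipschitz $\lambda$ of the same shape arranged so that $v+\lambda\geq C$ on $[\epsilon_0-\eta,\epsilon_0)$. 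The verification mirrors part~(1) with $\min$ replacing $\max$ and the supersolution shift principle in place of the subsolution one.

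I do not anticipate any serious obstacle. The only step requiring a bit of care is the shift principle of the first paragraph, but it is a mild variant of the approximation argument already used to prove Lemma~\ref{lem modify}. The plateau shape of $\lambda$ near $\epsilon_0$ is the crucial trick that makes the interface $t=\epsilon_0$ a non-issue, since near that point the extended function coincides with a fixed globally defined subsolution (resp.\ supersolution).
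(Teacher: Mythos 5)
Your proof is correct and follows essentially the same strategy as the paper: a non-decreasing ramp function vanishing on $[0,\epsilon]$ that grows fast enough to push $u-\lambda$ (resp.\ pull $v+\lambda$) below (resp.\ above) a globally defined sub- (resp.\ super-) solution before $t=\epsilon_0$, followed by a max/min gluing. The only cosmetic difference is your choice of global subsolution (a constant shift of the subbarrier from Proposition~\ref{prop.subbarrier} rather than the paper's static $m+\rho$ with $(dd^c\rho)^n\geq e^{M_F}M_f\,dV$), which changes nothing essential.
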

\begin{proof}
1)	Denote
	\begin{center}
		$M=\sup\limits_{[0, \epsilon_0)\times\overline{\Omega}}u$,
	$m=\min\{\inf\limits_{[0, \epsilon_0)\times\overline{\Omega}}u(t, z), 
	\min\limits_{[0, T]\times\partial\Omega}\varphi (t, z) \}$,
\end{center}
and
\begin{center}
	$M_F=\max_{[0, T]\times\overline{\Omega}}F(t, z, m)$, 
	 $M_f=\sup_{[0, T]\times\Omega}f(t, z)$.
\end{center}
	Let $\rho\in C^2(\overline{\Omega})\cap PSH^{-}(\Omega)$ such that
$(dd^c\rho)^n\geq e^{M_F}M_fdV$. We define
\begin{center}
	$h(t)=
	\begin{cases}
	0\qquad\mbox{ for }t<\epsilon,\\
	C(t-\epsilon)\qquad\mbox{ for }t\geq\epsilon,
	\end{cases}$
\end{center}
where $C=1+\dfrac{M-m+\max (-\rho)}{\epsilon_0-\epsilon}$.

Then 
\begin{center}
	$\tilde{u}=
	\begin{cases}
	\max\{u(t, z)-h(t), m+\rho\}\qquad\mbox{ in }
	\quad [0, \epsilon_0)\times\overline{\Omega},\\
	m+\rho \qquad\mbox{ in }
	\quad [\epsilon_0, T)\times\overline{\Omega},
	\end{cases}$
\end{center}
 is a continuous subsolution to \eqref{PMA} satisfying
\begin{center}
	$\begin{cases}
	\tilde{u}\leq u \mbox{ in } [0, \epsilon_0)\times\overline{\Omega},\\
	\tilde{u}= u \mbox{ in } [0, \epsilon)\times\overline{\Omega}.
	\end{cases}$
\end{center}
2) Denote
\begin{center}
	$m=\inf\limits_{[0, \epsilon_0)\times\overline{\Omega}}v$ and
	$M=\max\{\sup\limits_{[0, \epsilon_0)\times\overline{\Omega}}v, 
	\max\limits_{[0, T]\times\partial\Omega}\varphi  \}$.
\end{center}
 We define
\begin{center}
	$h(t)=
	\begin{cases}
	0\qquad\mbox{ for }t<\epsilon,\\
	C(t-\epsilon)\qquad\mbox{ for }t\geq\epsilon,
	\end{cases}$
\end{center}
where $C=1+\dfrac{M-m}{\epsilon_0-\epsilon}$.

Then 
\begin{center}
	$\tilde{v}=
	\begin{cases}
	\min\{v(t, z)+h(t), M\}\qquad\mbox{ in }
	\quad [0, \epsilon_0)\times\overline{\Omega},\\
	M \qquad\mbox{ in }
	\quad [\epsilon_0, T)\times\overline{\Omega},
	\end{cases}$
\end{center}
is a continuous supersolution to \eqref{PMA} satisfying
\begin{center}
	$\begin{cases}
	\tilde{v}\geq v \mbox{ in } [0, \epsilon_0)\times\overline{\Omega},\\
	\tilde{v}= v \mbox{ in } [0, \epsilon)\times\overline{\Omega}.
	\end{cases}$
\end{center}
\end{proof}
\begin{Prop}\label{prop.superbarrier}
	If $(u_0 (z), \mu(0, z))$ is  
	admissible then for all $\epsilon>0$,
	there exists a continuous $\epsilon$-superbarrier for \eqref{PMA_sol}
	which is Lipschitz in $t$. 
\end{Prop}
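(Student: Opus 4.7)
The plan is to mirror the construction of the subbarrier in Proposition \ref{prop.subbarrier} and produce the $\epsilon$-superbarrier as the minimum of two suitable supersolutions, one capturing the initial data and one capturing the boundary data.

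For the ``interior piece'' I would first invoke Theorem \ref{mainadmissible}(i) to pick a function $u_\delta \in PSH(\Omega) \cap C(\bar{\Omega})$ satisfying $u_0 \leq u_\delta \leq u_0 + \delta$ and $(dd^c u_\delta)^n \leq e^{C_\delta}\mu(0, z)$ in the viscosity sense, for some $\delta$ much smaller than $\epsilon$. I would then set $\bar{u}_1(t, z) := u_\delta(z) + L t + \delta_1$, with $L, \delta_1 > 0$ to be chosen. For any lower test function $q$ with $dd^c q \geq 0$ touching $\bar{u}_1$ from below at $(t_0, z_0) \in \Omega_T$, the function $q(t_0, \cdot)$ is a lower test function for $u_\delta$ at $z_0$ and $\partial_t q(t_0, z_0) = L$, so the admissibility bound on $u_\delta$ combined with the choice $L + \inf F \geq C_\delta$ on the relevant bounded range yields the supersolution inequality for the frozen-in-time equation with coefficient $\mu(0, \cdot)$. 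When $\mu$ genuinely depends on $t$, I would then correct $\bar{u}_1$ via Lemma \ref{lem modify}(2) by subtracting a negative psh function $p \in \E(\Omega)$ obtained from Theorem \ref{the MA Cegrell} applied to the bounded continuous density $\sup_{t \in [0, T]}|f(t, z) - f(0, z)|$ (continuous by Lemma \ref{lem continuity}), and by adding a suitable non-decreasing Lipschitz $h$ with $h(0) = 0$.

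For the ``boundary piece'' I would choose a smooth extension $\tilde{\varphi} \in C^\infty([0, T] \times \C^n)$ of $\varphi$ with $\varphi \leq \tilde{\varphi} \leq \varphi + \delta$ on $[0, T] \times \partial\Omega$, together with the smooth strictly psh defining function $\rho$ of $\Omega$ satisfying $\rho|_{\partial \Omega} = 0$ and $dd^c \rho \geq dd^c |z|^2$, and set $\bar{u}_2(t, z) := \tilde{\varphi}(t, z) - M_2 \rho(z)$. For $M_2$ large, the matrix $dd^c \bar{u}_2 = dd^c \tilde{\varphi} - M_2\, dd^c \rho$ is strictly negative-definite throughout $\bar{\Omega}$ uniformly in $t$; any lower test function $q$ with $dd^c q \geq 0$ touching $\bar{u}_2$ from below in $\Omega_T$ would force $0 \leq dd^c q \leq dd^c \bar{u}_2 < 0$, which is impossible, so $\bar{u}_2$ is a supersolution vacuously. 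It is $C^\infty$, hence Lipschitz in $t$, and coincides with $\tilde{\varphi} \in [\varphi, \varphi + \delta]$ on the lateral parabolic boundary. The superbarrier is then $\bar{u} := \min(\bar{u}_1, \bar{u}_2)$, continuous, Lipschitz in $t$, and a supersolution since a minimum of supersolutions is a supersolution (by the same touching argument used in Lemma \ref{lem perron EGZ15b}). For $M_2$ large, $\bar{u}_2$ dominates $\bar{u}_1$ in the interior so $\bar{u}(0, z) = u_\delta(z) + \delta_1 \in [u_0, u_0 + \delta + \delta_1]$, whereas near $\partial\Omega$ one has $\bar{u}_2 \leq \bar{u}_1$ and $\bar{u} \in [\varphi, \varphi + \delta]$; choosing $\delta + \delta_1 \leq \epsilon$ produces the required $\epsilon$-superbarrier.

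The hard part, in my view, is to reconcile the Lipschitz-in-$t$ requirement on $\bar{u}$ with the lower bound $\bar{u} \geq \varphi$ when $\varphi$ is only continuous, not Lipschitz, in $t$: one needs the linear term $Lt + \delta_1$ to dominate the modulus of continuity $\omega(t)$ of $\varphi$ uniformly in $t \in [0, T]$. This works because $\sup_{t \in (0, T]} \bigl( \omega(t) - Lt \bigr) \to 0$ as $L \to \infty$, so for $L$ large the slack $\delta_1$ absorbs the remaining gap. A further subtlety, in the time-dependent case, is to keep the $L^\infty$-size of the correction $-p$ under control so that the initial upper bound $\bar{u}(0, \cdot) \leq u_0 + \epsilon$ is not violated; this relies on the continuity and $L^\infty$-estimates of Kolodziej type for the Cegrell solution furnished by Theorem \ref{the MA Cegrell}.
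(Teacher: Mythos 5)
Your overall architecture (an interior supersolution $u_\delta(z)+Lt$ built from the admissibility data, a boundary supersolution built from an extension of $\varphi$, and the minimum of the two) is the same as the paper's, and the frozen-in-time part and the boundary part are essentially fine. The genuine gap is in how you correct for the time-dependence of $\mu$. You propose to subtract a psh potential $p$ of the density $\sup_{t\in[0,T]}|f(t,z)-f(0,z)|$, obtained from Theorem \ref{the MA Cegrell}, and you yourself flag that the $L^\infty$ size of $-p$ must be kept under control so that $\overline{u}(0,\cdot)\le u_0+\epsilon$. But this cannot work as stated: the density taken over the \emph{whole} interval $[0,T]$ is not small in any norm (it is of the order of the oscillation of $f$ in time), so no Kolodziej-type estimate can make $\|p\|_{L^\infty}$ of order $\epsilon$; and Theorem \ref{the MA Cegrell} supplies only existence in a Cegrell class, with no quantitative sup-norm bound at all. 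With a correction $-p$ of size $O(1)$, the value at $t=0$ becomes $u_\delta+\delta_1-p$, which violates the required upper bound $u^*\le u_0+\epsilon$ on $\{0\}\times\overline{\Omega}$, so the resulting function is not an $\epsilon$-superbarrier.

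The missing mechanism is the localization in time that the paper uses: one corrects only over a short interval $[0,\delta_1)$, with density $f_{\delta_1}(z)=\sup_{0\le t\le\delta_1}|f(t,z)-f(0,z)|$, which tends to $0$ pointwise (hence in $L^2$, by dominated convergence, since $f$ is bounded) as $\delta_1\to 0$; the stability estimate of Lemma \ref{lem GKZ08} (i.e., Theorem \ref{the GKZ08}) then forces the corresponding potential to satisfy $\|\rho\|_{L^\infty}<\epsilon/2$. This produces a supersolution only on $[0,\delta_2)\times\overline{\Omega}$, and one must then invoke Lemma \ref{lem short long}(2) to propagate it to a supersolution on all of $[0,T)$ without disturbing the initial bounds --- a step entirely absent from your argument. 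A secondary, fixable point: your $\overline{u}_2(0,\cdot)=\tilde\varphi(0,\cdot)-M_2\rho$ is only guaranteed to dominate $u_0$ \emph{on} $\partial\Omega$; just inside the boundary $-M_2\rho$ is small and $\tilde\varphi(0,\cdot)$ may dip below $u_0$, so you need either a small upward shift combined with uniform continuity, or (as in the paper) to take $\overline{u}_2(t,\cdot)$ to be the maximal psh function with boundary values $\varphi^\epsilon(t,\cdot)\ge u_0|_{\partial\Omega}$, which dominates the psh function $u_0$ automatically.
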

In order to prove Proposition \ref{prop.superbarrier}, we need the following lemma
\begin{Lem}\label{lem GKZ08} For every $p>1$, 
	there exists a continuous function $A_p: (0, \infty)\rightarrow (0, \infty)$
	with $\lim\limits_{t\to 0^+}A_p(t)=0$ such that: for every $g\in L^p(\Omega)$,
	the unique solution $u$ to the problem
	\begin{equation}
	\begin{cases}
	u\in C(\overline{\Omega})\cap PSH(\Omega),\\
	(dd^cu)^n=gdV,\\
	u|_{\partial\Omega}=0,
	\end{cases}
	\end{equation}
	satisfies $\sup_{\Omega}|u|\leq A_p(\|g\|_{L^p(\Omega)})$.
\end{Lem}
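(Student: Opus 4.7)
The plan is to deduce this lemma directly from Kolodziej's $L^\infty$ a priori estimate for solutions of the complex Monge--Amp\`ere equation with $L^p$ density. Kolodziej's theorem \cite{K98} provides both the existence and uniqueness of the continuous plurisubharmonic solution $u$ to the stated Dirichlet problem whenever $g \in L^p(\Omega)$ with $p>1$, together with a quantitative bound of the form
\begin{equation*}
\sup_\Omega |u| \;\leq\; C(n,p,\Omega)\, \|g\|_{L^p(\Omega)}^{1/n}.
\end{equation*}
Given this estimate, the lemma follows at once: set $A_p(t) := C(n,p,\Omega)\, t^{1/n}$, which is continuous on $(0,\infty)$, strictly positive there, and tends to $0$ as $t\to 0^+$. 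Note that any bound of the form $\sup|u| \le h(\|g\|_{L^p})$ with $h$ continuous and $h(0)=0$ would suffice, so one need not be fussy about the exact exponent.

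The only real content is Kolodziej's estimate itself, and I would prefer to quote it rather than reproduce its proof. The standard derivation proceeds through the pluricomplex capacity $\mathrm{cap}(\cdot,\Omega)$: one analyses the sublevel sets $\{u<-s\}$ by combining the comparison principle with H\"older's inequality
\begin{equation*}
\int_E g\, dV \;\leq\; \|g\|_{L^p(\Omega)}\, \mathrm{Vol}(E)^{1-1/p},
\end{equation*}
and the classical estimate $\mathrm{Vol}(E) \leq C\, \mathrm{cap}(E,\Omega)^\alpha$ for $E \subset \Omega$ valid on hyperconvex domains. This yields a functional inequality for the decreasing function $s \mapsto \mathrm{cap}(\{u<-s\})^{1/n}$ to which Kolodziej's iteration lemma applies, forcing $\mathrm{cap}(\{u<-s_\infty\})=0$ for some finite $s_\infty$ controlled polynomially by $\|g\|_{L^p}$.

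The principal obstacle, were one to aim for a self-contained proof, would be to track the quantitative dependence through the capacity--volume inequality and the iteration so that the final bound indeed vanishes as $\|g\|_{L^p}\to 0$, rather than merely remaining finite. Since this is a classical and thoroughly documented fact (see also \cite{GKZ} and the survey \cite{DDGHKZ} for streamlined presentations), I would simply cite the statement, define $A_p$ as above, and conclude.
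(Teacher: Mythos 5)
Your proposal is correct, and it reaches the conclusion by citing a different (though closely related) result than the paper does. The paper deduces the lemma from the stability estimate of \cite[Theorem 1.1]{GKZ} (reproduced as Theorem \ref{the GKZ08}): applying it with $\psi_1=u$ and $\psi_2=0$ gives $\sup_\Omega(-u)\leq C(\gamma,\|g\|_{L^p})\,\|u\|_{L^r}^{\gamma}$, and since $\|u\|_{L^r}\leq \mathrm{Vol}(\Omega)^{1/r}\sup_\Omega|u|$ this can be solved for $\sup_\Omega|u|$; the key point, which the authors explicitly note is \emph{not} stated in \cite{GKZ} but must be extracted from its proof, is that $C(\gamma,\|g\|_{L^p})\to 0$ as $\|g\|_{L^p}\to 0$. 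You instead quote Kolodziej's a priori bound $\sup_\Omega|u|\leq C(n,p,\Omega)\,\|g\|_{L^p(\Omega)}^{1/n}$ and set $A_p(t)=C\,t^{1/n}$. Your route is more direct and produces an explicit $A_p$ with the natural scaling exponent, and it avoids having to re-inspect the proof in \cite{GKZ} for the vanishing of the constant; the paper's route has the advantage of reusing a theorem it already needs elsewhere (Theorem \ref{the GKZ08} is invoked again in the proof of Proposition \ref{prop.superbarrier}). Both arguments rest on the same underlying capacity--volume iteration of Kolodziej, which you correctly identify as the only real content, so there is no gap; you are right that any continuous $h$ with $h(0^+)=0$ dominating $\sup_\Omega|u|$ in terms of $\|g\|_{L^p}$ suffices.
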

Lemma \ref{lem GKZ08} is an immediate corollary of the following theorem
\begin{The}\label{the GKZ08}\cite[Theorem 1.1]{GKZ}
	Fix $0\leq g\in L^p(\Omega)$, $p>1$. Let $\psi_1, \psi_2$ be two bounded psh functions in $\Omega$ such that
	$(dd^c\psi_1)^n=gdV$, and $\liminf_{z\to\partial\Omega}(\psi_1-\psi_2)(z)\geq 0$. Fix $r\geq 1$ and
	$0\leq\gamma<r/[nq+r]$, $1/p+1/q=1$. Then
	\begin{center}
		$\sup (\psi_2-\psi_1)\leq C\|\max\{\psi_2-\psi_1, 0\}\|_{L^r(\Omega)}^{\gamma},$
	\end{center}
	for some uniform constant $C=C(\gamma, \|g\|_{L^p(\Omega)})$. Moreover, for each $0\leq\gamma<r/[nq+r]$,
	\begin{center}
		$\lim\limits_{\|g\|_{L^p(\Omega)}\to 0^+}C(\gamma, \|g\|_{L^p(\Omega)})=0.$
	\end{center}
\end{The}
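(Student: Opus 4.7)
The plan is to adapt Kolodziej's capacity-based iteration for $L^\infty$ estimates of Monge--Amp\`ere solutions, refined so that the final bound is controlled by an $L^r$ norm of the defect $\psi_2-\psi_1$ rather than by an absolute volume. For $s\ge 0$, I introduce the sublevel sets
\begin{equation*}
U(s) := \{z\in\Omega : \psi_2(z) > \psi_1(z)+s\},
\end{equation*}
and observe that the boundary hypothesis $\liminf_{\partial\Omega}(\psi_1-\psi_2)\ge 0$ gives $U(s)\Subset\Omega$ for every $s>0$, which is precisely what is needed to run pluripotential comparison arguments on each $U(s)$.

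The first step is a comparison-plus-H\"older estimate. By the Bedford--Taylor comparison principle applied to the relatively compact set $U(s)$,
\begin{equation*}
\int_{U(s)}(dd^c\psi_2)^n \;\le\; \int_{U(s)}(dd^c\psi_1)^n \;=\; \int_{U(s)} g\,dV \;\le\; \|g\|_{L^p(\Omega)}\,\mathrm{Vol}(U(s))^{1/q}.
\end{equation*}
The second step converts the left-hand mass into Bedford--Taylor capacity. Testing against an extremal plurisubharmonic function yields
\begin{equation*}
t^n\,\mathrm{Cap}(U(s+t),\Omega) \;\le\; \int_{U(s)}(dd^c\psi_2)^n,
\end{equation*}
and combined with the standard volume/capacity comparisons available in the bounded domain $\Omega$, one arrives at a recursion of the form
\begin{equation*}
t^n\,h(s+t)\;\le\; A\,\|g\|_{L^p}\,h(s)^{1+\delta},
\end{equation*}
where $h(s)$ is either $\mathrm{Cap}(U(s),\Omega)$ or $\mathrm{Vol}(U(s))$ and $\delta>0$ is a power depending on $n$ and $q$.

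The third step is Kolodziej's iteration lemma: from a recursive inequality of this form, one deduces that $h$ vanishes past a level $s_0 + K\,h(s_0)^{\delta'}$, and hence
\begin{equation*}
\sup_\Omega(\psi_2-\psi_1) \;\le\; s_0 + K\,h(s_0)^{\delta'},
\end{equation*}
with $K$ depending polynomially on $\|g\|_{L^p}$. The fourth step is to select the ``starting level'' $s_0$ via Chebyshev:
\begin{equation*}
\mathrm{Vol}(U(s_0))\;\le\; s_0^{-r}\,\|\max\{\psi_2-\psi_1,0\}\|_{L^r(\Omega)}^{r},
\end{equation*}
then to optimize $s_0$ as a function of $\|\max\{\psi_2-\psi_1,0\}\|_{L^r}$. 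Careful bookkeeping of the exponents arising from Steps 1--4 yields $\sup(\psi_2-\psi_1)\le C\,\|\max\{\psi_2-\psi_1,0\}\|_{L^r}^{\gamma}$ for any $\gamma<r/(nq+r)$, with $C=C(\gamma,\|g\|_{L^p})$.

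The main obstacle is precisely the exponent bookkeeping, and in particular establishing the ``moreover'' clause. Since $\|g\|_{L^p}$ enters the recursion in Step 2 as a multiplicative factor, shrinking it tightens the Kolodziej iteration and lowers the terminal threshold; the qualitative statement $C(\gamma,\|g\|_{L^p})\to 0$ as $\|g\|_{L^p}\to 0^+$ follows by tracking how this factor propagates through the iteration lemma, but producing the sharp exponent $r/(nq+r)$ without loss, and verifying that the decay of $C$ is uniform for fixed $\gamma$ strictly below this threshold, is the delicate technical step. The need to stay strictly below $r/(nq+r)$ reflects the loss incurred at each iteration stage when one converts between $\mathrm{Vol}$ and $\mathrm{Cap}$.
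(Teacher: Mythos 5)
You should first note that the paper does not prove this statement at all: it is quoted verbatim from \cite[Theorem 1.1]{GKZ}, the only added remark being that the final assertion ($C(\gamma,\|g\|_{L^p})\to 0$ as $\|g\|_{L^p}\to 0^+$) has to be extracted from the proof given there. So your proposal must be measured against the GKZ argument, whose skeleton (sublevel sets $U(s)$, comparison principle, capacity--volume estimates, a De Giorgi/Kolodziej iteration lemma, Chebyshev choice of the starting level $s_0$) you have correctly identified. However, your Step 2 is false as stated: the inequality $t^n\,\mathrm{Cap}(U(s+t),\Omega)\le\int_{U(s)}(dd^c\psi_2)^n$ puts the Monge--Amp\`ere mass of the wrong function on the right. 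Take $\Omega$ the unit ball, $\psi_1=|z|^2-1$, $\psi_2=0$: all hypotheses of the theorem hold ($g$ is a positive constant, $\liminf_{\partial\Omega}(\psi_1-\psi_2)=0$), $U(s)=\{|z|^2<1-s\}$ has positive capacity for $0<s<1$, yet $(dd^c\psi_2)^n\equiv 0$, so your right-hand side vanishes. Hence the chain ``Step 2 followed by Step 1'' cannot produce the recursion you need.

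The repair is the standard Kolodziej-type lemma in which the density of $\psi_1$ (the function dominating at the boundary) appears directly: for $w\in PSH(\Omega)$ with $-1\le w\le 0$ one has the inclusions $U(s+t)\subset\{\psi_1+s<\psi_2+tw\}\subset U(s)$, and the comparison principle applied to the pair $(\psi_1+s,\,\psi_2+tw)$ --- the perturbation $tw$ must be attached to $\psi_2$, not to $\psi_1$ --- gives $t^n\int_{U(s+t)}(dd^cw)^n\le\int_{U(s)}(dd^c\psi_1)^n=\int_{U(s)}g\,dV\le\|g\|_{L^p}\,\mathrm{Vol}(U(s))^{1/q}$; taking the supremum over $w$ yields $t^n\,\mathrm{Cap}(U(s+t),\Omega)\le\|g\|_{L^p}\,\mathrm{Vol}(U(s))^{1/q}$, after which your Step 1 is not needed at all. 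With this correction, your Steps 3--5 (volume--capacity domination, the iteration lemma, the Chebyshev choice of $s_0$) are exactly the route of \cite{GKZ}. What you have not supplied is precisely the exponent bookkeeping that yields the sharp threshold $\gamma<r/(nq+r)$ and, more importantly, the uniformity statement $\lim_{\|g\|_{L^p}\to 0^+}C(\gamma,\|g\|_{L^p})=0$, which you only assert should follow by ``tracking the factor''; that clause is the very part the present paper says must be read off from the proof in \cite[pages 1071--1073]{GKZ} rather than from the statement, so deferring it leaves the proposal incomplete on the one point where a proof was actually required.
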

The last assertion of Theorem \ref{the GKZ08} is not mentioned in \cite[Theorem 1.1]{GKZ} but it is deduced
from the proof of this theorem (see \cite[pages 1071-1073]{GKZ}).
\begin{proof}[Proof of Proposition \ref{prop.superbarrier}]
	Since $(u_0(z), \mu(0, z))$ is  admissible, there	exist $u_{\epsilon}\in C(\bar{\Omega})$ and $C_{\epsilon}>0$ such that $u_0+\dfrac{\epsilon}{4}\leq u_{\epsilon}\leq u_0+\dfrac{\epsilon}{2}$ and 
	$(dd^c u_{\epsilon})^n\leq e^{C_{\epsilon}}\mu (0, z)$ in the viscosity sense. 
	Denote
	$$M_1=\sup\{|F(t, z, u_0(z))|: (t, z)\in (0, T)\times\Omega \}.$$
	By using the definition,
	 we have $\hat{u}(t, z):=u_{\epsilon}(z)+(C_{\epsilon}+M_1)t$ is a viscosity supersolution to the equation
	\begin{equation}\label{eq1 proof propsuper}
	(dd^cw)^n=e^{\partial_tw+F(t, z, w)}\mu(0, z),
	\end{equation}
	in $(0, T)\times\Omega$.
	
	 For every $0<\delta<T$, we denote
	\begin{center}
		$f_{\delta}(z)=\sup\{|f(t, z)-f(0, z)|: 0\leq t\leq \delta \}.$
	\end{center}
	Then $f_{\delta}\in C(\Omega)\cap L^{\infty}(\Omega)$ (by Lemma \ref{lem continuity})
	 and $f_{\delta}\searrow 0$ as $\delta\searrow 0$. Hence
	\begin{center}
		$\lim\limits_{\delta\to 0^+}\|f_{\delta}\|_{L^2(\Omega)}=0$.
	\end{center}
	Therefore, we can choose $0<\delta_1<T$ such that 
	$A_2(e^{C_{\epsilon}+M_1+M_2}\|f_{\delta_1}\|_{L^2(\Omega)})<\epsilon/2$, where 
	$A_2$ is defined as in Lemma \ref{lem GKZ08} and
	\begin{center}
$M_2:=\sup\{ F(t, z, \sup\limits_{[0, T]\times\partial\Omega}\varphi): (t, z)\in [0, T]\times\overline{\Omega}\}$.
	\end{center}

	Define by $\rho$ the unique solution to the problem
	\begin{equation}
	\begin{cases}
	\rho\in PSH(\Omega)\cap C(\overline{\Omega}),\\
	(dd^c\rho)^n=e^{C_{\epsilon}+M_1+M_2}f_{\delta_1}dV,\\
	\rho|_{\partial\Omega}=0.
	\end{cases}
	\end{equation}
	Then, by using Lemma \ref{lem modify}, we have $\hat{u}-\rho$
	 is a supersolution to the equation
	 \begin{equation}
	 (dd^cw)^n=e^{\partial_tw+F(t, z, w)}\mu(t, z),
	 \end{equation}
	 in $(0, \delta_1)\times\Omega$. Moreover, by Theorem \ref{the GKZ08}, we have $0\leq -\rho\leq \epsilon/2$.
	  
	 Choose $0\leq\delta_2\leq\delta_1$ such that $\hat{u}(t, z)\geq\varphi (t, z)$ 
	 for every $(t, z)\in (0, \delta_2)\times\partial\Omega$. Then $\hat{u}-\rho$ is a supersolution to \eqref{PMA_sol}
	 in $[0, \delta_2)\times\overline{\Omega}$
	 which is Lipschitz in $t$. Moreover $u_0(z)\leq \hat{u}(0, z)-\rho(z)\leq u_0(z)+\epsilon$ in $\Omega$.
	 By using Lemma \ref{lem short long}, there exists a supersolution $\underline{u}_1$ to \eqref{PMA_sol}
	 in $[0, T)\times\overline{\Omega}$  which is Lipschitz in $t$ and satisfies 
	 $u_0(z)\leq \underline{u}_1(0, z)\leq u_0(z)+\epsilon$ in $\Omega$.
	
	Let $\varphi^{\epsilon}\in C^{\infty}(\R\times\C^n)$ such that
	\begin{center}
		$\varphi\leq\varphi^{\epsilon}\leq\varphi+\epsilon,$
	\end{center}
	in $[0, T]\times\partial\Omega$.
	For every $t\in [0, T]$, we denote by $\overline{u}_2(t, z)$ the unique solution to the equation
	\begin{equation}
	\begin{cases}
	\overline{u}_2(t, \cdot)\in PSH(\Omega)\cap C(\overline{\Omega}),\\
	(dd^c\overline{u}_2(t, z))_P^n=0,\\
	\overline{u}_2(t, z)|_{\partial\Omega}=\varphi^{\epsilon}(t, z)|_{\partial\Omega}.
	\end{cases}
	\end{equation}
	By \cite[Theorem A]{BT1} we have
	\begin{center}
		$|\overline{u}_2(t_1, z)-\overline{u}_2(t_2, z)|
		\leq \sup\{|\varphi^{\epsilon}(t, \xi)-\varphi^{\epsilon}(t, \xi)|:\xi\in\partial\Omega \}$,
	\end{center}
for every $t_1, t_2\in [0, T]$, for all $z\in\Omega$.

	Then $\overline{u}_2: [0, T]\times\overline{\Omega}\rightarrow\R$ is Lipschitz in $t$. In particular,
	$\overline{u}_2\in C([0, T]\times\overline{\Omega})$. It is easy to see that $\overline{u}_2$ is
	a supersolution to \eqref{PMA_sol}.
	
	Now, we define $\overline{u}=\min\{\overline{u}_1, \overline{u}_2\}$. It is clear that
	$\overline{u}$ is a continuous $\epsilon$-superbarrier for \eqref{PMA_sol}.
\end{proof}
\begin{Rem}\label{rem_not_admit}
	The converse statement of Proposition \ref{prop.superbarrier} is false.
	For example, if $\Omega$ is the unit ball, $u_0=|z|^2-1$, $\varphi=0$, $F=0$, $\mu=tdV$
	then  for every $T>0$, 
	\begin{center}
		$\overline{u}_T(t, z)=\min\{0, u_0-t\log t+e^Tt\},$
	\end{center}
	is	an $\epsilon$-superbarrier for \eqref{PMA_sol} in $[0, T)\times\bar{\Omega}$ for
	every $\epsilon>0$. But $(u_0, \mu (0, z))=(|z|^2-1, 0)$ is not admissible.
\end{Rem}
\begin{Prop}\label{prop short long}
	Let $\epsilon>0$. If there exists a continuous $\epsilon$-superbarrier $u$ for 
	\eqref{PMA} in $[0, S)\times\overline{\Omega}$ for some $0<S<T$, then
	there exists a continuous $\epsilon$-superbarrier $\tilde{u}$ for 
	\eqref{PMA} in $[0, T)\times\overline{\Omega}$.
	 Moreover, if $u$ is Lipschitz in $t$  then $\tilde{u}$ is also 
	Lipschitz in $t$.
\end{Prop}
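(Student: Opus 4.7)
I would glue the given $\epsilon$-superbarrier $u$ on $[0,S)\times\overline\Omega$ with an auxiliary supersolution $V$ defined on the entire strip $[0,T)\times\overline\Omega$ via a minimum, arranged so that $V$ takes over before $t$ reaches $S$. The role of $V$ is to carry the correct boundary data $\varphi$ (up to $\epsilon$) for all $t\in[0,T)$; for this I would reuse the Perron--Bremermann envelope that already appeared as $\overline u_2$ in the proof of Proposition~\ref{prop.superbarrier}.

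Choose $\varphi^\epsilon\in C^\infty(\R\times\C^n)$ with $\varphi\le\varphi^\epsilon\le\varphi+\epsilon/2$ on $[0,T)\times\partial\Omega$ and set $V(t,z):=U_{\varphi^\epsilon(t,\cdot)}(z)$, the maximal psh function in $\Omega$ with boundary values $\varphi^\epsilon(t,\cdot)|_{\partial\Omega}$. As in Proposition~\ref{prop.superbarrier}, $V$ is continuous on $[0,T)\times\overline\Omega$, Lipschitz in $t$, and a viscosity supersolution of~\eqref{PMAfree}, since $(dd^c V(t,\cdot))_P^n=0$ in the pluripotential sense. Fix $S_2\in(0,S)$ and choose a smooth non-decreasing function $g:[0,S_2]\to[0,\infty)$ with $g(0)=0$ and $g(S_2)\ge\max_{\overline\Omega}(V(S_2,\cdot)-u(S_2,\cdot))$; then set
\[
\tilde u(t,z):=\begin{cases}\min\{u(t,z)+g(t),\,V(t,z)\},&t\in[0,S_2],\\ V(t,z),&t\in[S_2,T).\end{cases}
\]

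Continuity of $\tilde u$ at $t=S_2$ is forced by the choice of $g(S_2)$, which makes $u(S_2,z)+g(S_2)\ge V(S_2,z)$ for every $z$. On $[0,S_2]$, the function $u+g$ is still a supersolution of~\eqref{PMAfree} because $g$ is non-decreasing and $F$ is non-decreasing in its last argument; hence $\tilde u$, being a minimum of two supersolutions there and equal to $V$ on $[S_2,T)$, is a viscosity supersolution of~\eqref{PMAfree} in $(0,T)\times\Omega$. The boundary inequality $\varphi\le\tilde u\le\varphi+\epsilon$ on $[0,T)\times\partial\Omega$ follows from $V|_{\partial\Omega}=\varphi^\epsilon\in[\varphi,\varphi+\epsilon/2]$ together with $u|_{\partial\Omega}\in[\varphi,\varphi+\epsilon]$. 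If $u$ is Lipschitz in $t$, so are $u+g$ and $V$ (the latter by \cite[Theorem A]{BT1}), hence also their minimum, giving the Lipschitz conclusion.

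The only subtle step is the initial condition $u_0\le\tilde u(0,\cdot)\le u_0+\epsilon$: the upper bound is immediate from $u(0,\cdot)\le u_0+\epsilon$, while the lower bound requires $V(0,\cdot)\ge u_0$, which is exactly the maximality of $V(0,\cdot)$ applied to the psh candidate $u_0$, using the compatibility $u_0|_{\partial\Omega}=\varphi(0,\cdot)\le\varphi^\epsilon(0,\cdot)$. Everything else reduces to the standard stability of viscosity supersolutions under minima.
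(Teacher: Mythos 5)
Your construction is correct and follows essentially the same route as the paper: the paper first extends $u$ to a supersolution on all of $[0,T)\times\overline{\Omega}$ via Lemma \ref{lem short long} (adding a non-decreasing $h(t)$ and capping with a constant $M$) and then takes the minimum with the maximal psh extension $\overline{u}_2$ of a smooth $\varphi^{\epsilon}$, whereas you merge these two steps by capping directly with $V=\overline{u}_2$. The only point worth making explicit is the supersolution property on the gluing slice $\{t=S_2\}$, which holds because $\tilde{u}\leq V$ everywhere with equality on that slice, so any lower test function for $\tilde{u}$ there is a lower test function for $V$.
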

\begin{proof}
By the assumption and by Lemma \ref{lem short long}, there exists a continuous supersolution
$u_1$ to \eqref{PMA_sol} in $[0, T)\times\overline{\Omega}$ such that 
$u_0(z)\leq u_1(0, z)\leq u_0(z)+\epsilon$ for all $z\in\Omega$.

Let $\varphi^{\epsilon}\in C^{\infty}(\R\times\C^n)$ such that
\begin{center}
	$\varphi\leq\varphi^{\epsilon}\leq\varphi+\epsilon,$
\end{center}
in $[0, T]\times\partial\Omega$.

 Let $u_2\in C([0, T]\times\bar{\Omega})$ such that
$u_2=\varphi^{\epsilon}$ 
in $[0, T]\times\partial\Omega$ and $u_2(t, .)$
is maximal plurisubharmonic in $\Omega$ for every $t\in [0, T]$. 

Then 
$\tilde{u}=\min\{u_1, u_2\}$ is
a continuous $\epsilon$-superbarrier for 
\eqref{PMA_sol} in $[0, T)\times\overline{\Omega}$.
\end{proof}

For $j=1,2$, we assume that
\begin{itemize}
	\item $F_j(t, z, r)$ is continuous in $[0, T]\times\bar{\Omega}\times\R$ and non-decreasing in $r$.
	\item $\varphi_j(t, z)$ is a continuous function in $[0, T]\times\partial\Omega$
	such that $u_0(z)=\varphi_j(0, z)$ in $\partial\Omega$.
\end{itemize}
We consider the following problems
\begin{equation}\label{PMA1}
\begin{cases}
e^{\dt u+F_1(t, z, u)}\mu(t, z)=\MAu\qquad\mbox{ in }\qquad (0,T)\times\Omega,\\
u=\varphi_1\qquad\mbox{in}\qquad [0, T)\times\partial\Omega,\\
u(0, z)=u_0(z)\qquad\mbox{in}\quad\bar{\Omega},
\end{cases}
\end{equation}
and
\begin{equation}\label{PMA2}
\begin{cases}
e^{\dt u+F_2(t, z, u)}\mu(t, z)=\MAu\qquad\mbox{ in }\qquad (0,T)\times\Omega,\\
u=\varphi_2\qquad\mbox{in}\qquad [0, T)\times\partial\Omega,\\
u(0, z)=u_0(z)\qquad\mbox{in}\quad\bar{\Omega}.
\end{cases}
\end{equation}
\begin{Prop}
	Let $\epsilon_0>0$. If  there exists  a continuous $\epsilon_0$-superbarrier 
	$\overline{u}_1$ for the problem \eqref{PMA1} then for every $\epsilon>\epsilon_0$,
	 there exists a continuous $\epsilon$-superbarrier $\overline{u}_2$ for the problem \eqref{PMA2}. Moreover,
	 if $\overline{u}_1$ is Lipschitz in $t$ then  $\overline{u}_2$ is Lipschitz in $t$.
\end{Prop}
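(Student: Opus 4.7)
The plan is to construct $\overline{u}_2$ as the minimum $\min\{u_1,u_2\}$ of two supersolutions to \eqref{PMA2}, where $u_1$ is obtained from $\overline{u}_1$ by adding a non-decreasing function of $t$ that simultaneously absorbs the change from $F_1$ to $F_2$ and the boundary difference $\varphi_2-\varphi_1$, while $u_2$ is the plurisubharmonic maximal extension of a smooth upper approximation of $\varphi_2$.

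Set $\delta:=(\epsilon-\epsilon_0)/2$ and choose $\varphi^{\epsilon}\in C^{\infty}(\R\times\C^n)$ with $\varphi_2\le\varphi^{\epsilon}\le\varphi_2+\delta$ on $[0,T]\times\partial\Omega$. Define $u_2(t,\cdot)$ as the solution to $(dd^c u_2(t,\cdot))_P^n=0$ in $\Omega$ with boundary data $\varphi^{\epsilon}(t,\cdot)$; by \cite[Theorem A]{BT1}, $u_2\in C([0,T]\times\overline{\Omega})$ and is Lipschitz in $t$. Set
\[
w(t):=\sup_{s\in[0,t],\,z\in\partial\Omega}\bigl(\varphi_2(s,z)-\varphi_1(s,z)\bigr)_+ ,
\]
which is continuous and non-decreasing with $w(0)=0$ (since $\varphi_1(0,\cdot)=u_0=\varphi_2(0,\cdot)$ on $\partial\Omega$). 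With $M_1:=\sup|\overline{u}_1|$ and $C:=\sup_{[0,T]\times\overline{\Omega}}\bigl(F_1(\cdot,\cdot,M_1)-F_2(\cdot,\cdot,-M_1)\bigr)<\infty$, let $h(t):=w(t)+\delta+Ct$ and $u_1(t,z):=\overline{u}_1(t,z)+h(t)$.

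To see that $u_1$ is a supersolution to \eqref{PMA2}, take a $C^{(1,2)}$ lower test function $\psi$ for $u_1$ at $(t_0,z_0)$ with $dd^c\psi_{t_0}(z_0)\ge 0$; then $\psi-h$ is a lower test function for $\overline{u}_1$ at $(t_0,z_0)$ with the same complex Hessian in $z$, so the supersolution property for \eqref{PMA1} gives
\[
(dd^c\psi_{t_0})^n(z_0)\le e^{\partial_t\psi(t_0,z_0)-h'(t_0)+F_1(t_0,z_0,\psi(t_0,z_0)-h(t_0))}\mu(t_0,z_0).
\]
Because $\psi(t_0,z_0)-h(t_0)=\overline{u}_1(t_0,z_0)\in[-M_1,M_1]$, $\psi(t_0,z_0)\ge -M_1$, $F_1, F_2$ are non-decreasing in $r$, and $h'\ge C$ distributionally, the exponent is $\le \partial_t\psi(t_0,z_0)+F_2(t_0,z_0,\psi(t_0,z_0))$. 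The non-smoothness of $h$ is handled by mollifying $h$ to obtain smooth non-decreasing $h_j\to h$ uniformly with $h_j'\ge C$, showing each $u_1^j:=\overline{u}_1+h_j$ is a supersolution by the test function argument just given, and passing to the limit via Lemma \ref{lem.seq}. Then set $\overline{u}_2:=\min\{u_1,u_2\}$: the minimum of two supersolutions is a supersolution; on $\{0\}\times\overline{\Omega}$, one has $u_0\le\overline{u}_2(0,\cdot)\le u_1(0,\cdot)\le u_0+\epsilon_0+\delta<u_0+\epsilon$ (with $u_2(0,\cdot)\ge u_0$ by the maximum principle, since $u_0$ is plurisubharmonic with boundary value $\le\varphi^{\epsilon}(0,\cdot)$); on $[0,T)\times\partial\Omega$, the bound $h\ge w+\delta\ge(\varphi_2-\varphi_1)+\delta$ yields $u_1\ge\varphi_2+\delta\ge u_2$, hence $\overline{u}_2=u_2=\varphi^{\epsilon}\in[\varphi_2,\varphi_2+\epsilon]$.

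The main subtlety lies in the \emph{Moreover} clause: $w$ need not be Lipschitz in $t$ when $\varphi_1,\varphi_2$ are merely continuous. I would replace $w$ by its sup-convolution $w^L(t):=\sup_{s\in[0,T)}\bigl(w(s)-L|t-s|\bigr)$, which for every $L>0$ is $L$-Lipschitz, non-decreasing, dominates $w$ pointwise, and tends to $w$ uniformly as $L\to\infty$ (by continuity of $w$ on $[0,T)$). Choosing $L$ large enough that $w^L(0)\le\delta$, the redefined $h:=w^L+\delta+Ct$ remains Lipschitz and non-decreasing while preserving all the bounds above; then $u_1$ is Lipschitz in $t$ (since so is $\overline{u}_1$), $u_2$ is Lipschitz in $t$ (since $\varphi^{\epsilon}$ is smooth), and therefore $\overline{u}_2$ is Lipschitz in $t$.
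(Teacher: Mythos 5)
Your construction is correct in substance, but it takes a genuinely different route from the paper. The paper localizes in time: since $\varphi_1(0,\cdot)=\varphi_2(0,\cdot)=u_0$ on $\partial\Omega$, it picks a short interval $[0,\delta_0)$ on which $|\varphi_1-\varphi_2|<\tfrac{\epsilon-\epsilon_0}{3}$ and on which $\overline{u}_1+Ct+\tfrac{\epsilon-\epsilon_0}{3}$ (with $C$ absorbing the change from $F_1$ to $F_2$ and $C\delta_0\le\tfrac{\epsilon-\epsilon_0}{3}$) is already an $\epsilon$-superbarrier for \eqref{PMA2}, and then invokes Proposition \ref{prop short long} to extend it to all of $[0,T)$; the Lipschitz clause is then inherited for free, since Lemma \ref{lem short long} preserves Lipschitz regularity in $t$. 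You instead build a global-in-time supersolution directly, by adding the running supremum $w(t)$ of the boundary discrepancy plus a linear drift, and only then take the minimum with the maximal psh extension of $\varphi^{\epsilon}$ (which is exactly the ingredient Proposition \ref{prop short long} also uses for the lateral boundary). What your approach buys is independence from Proposition \ref{prop short long}/Lemma \ref{lem short long}; what it costs is the extra sup-convolution argument for the Lipschitz clause, which the paper's short-time truncation renders unnecessary. Two small repairs are needed in your write-up: (1) your constant $C=\sup\bigl(F_1(\cdot,\cdot,M_1)-F_2(\cdot,\cdot,-M_1)\bigr)$ can be negative, in which case $h$ may decrease and become negative, breaking both the lower bound $\psi(t_0,z_0)\ge -M_1$ and the boundary inequality $u_1\ge\varphi_2+\delta$ for large $t$; you should take $\max\{C,0\}$. (2) The mollification of $h$ (and the monotonicity of the sup-convolution $w^L$, which does hold for non-decreasing $w$ but deserves a line of justification near the right endpoint of $[0,T)$) should be stated as being carried out on compact subintervals before applying Lemma \ref{lem.seq}, exactly as in the proof of Lemma \ref{lem modify}. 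With these adjustments the argument is complete.
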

\begin{proof}
	Since  $u_0(z)=\varphi_1(0, z)=\varphi_2(0, z)$ for every  $z\in\partial\Omega$, there exists $\delta>0$ such that
	\begin{center}
		$|\varphi_1 (t, z)-\varphi_2 (t, z)|<\dfrac{\epsilon-\epsilon_0}{3}$,
	\end{center}
	for every $(t, z)\in [0, \delta]\times\partial\Omega$. Let $C>0$ such that
	\begin{center}
		$e^{\dt (\overline{u}_1+Ct)+F_2(t, z, \overline{u}_1+Ct)}\mu(t, z)\geq (dd^c(\overline{u}_1+Ct))^n$,
	\end{center}
	in the viscosity sense in $[0, \delta)\times\partial\Omega$. 
	
	Let $\delta_0=\min\{\delta, \dfrac{\epsilon-\epsilon_0}{3C}\}$. Then $\overline{u}_1+Ct+\dfrac{\epsilon-\epsilon_0}{3}$
	is a continuous $\epsilon$-superbarrier for the problem \eqref{PMA2} 
	in $[0, \delta_0)\times\partial\Omega$. Hence, it
	follows from Proposition \ref{prop short long} that 
	there exists a continuous $\epsilon$-superbarrier $\overline{u}_2$ for the problem \eqref{PMA2}.
\end{proof}
\subsection{The existence of solution}
Now we prove some results about the existence of solution to \eqref{PMA}. Theorem \ref{mainexistence}
 is an immediate corollary of the following:
\begin{The}\label{the exist1}
	Suppose that for every $\epsilon>0$, there exists a continuous 
	$\epsilon$-superbarrier for \eqref{PMA}. Assume that for every $K\Subset\Omega$,
	for every $0<R<S<T$ and $\epsilon>0$, there exists $0<\delta\ll 1$
	such that
	\begin{equation}\label{eq1 exist1}
	(1+\epsilon)f(t+s, z)\geq f(t, z),
	\end{equation}
	and
	\begin{equation}\label{eq2 exist1}
	(1+\epsilon)f(t, z)\geq f(t+s, z),
	\end{equation}
	for all $z\in K$, $0<s<\delta$ and $R<t<S$.
	
	Then \eqref{PMA} admits a unique solution $u$. Moreover, $(u(t, z), \mu(t, z))$ is admissible
	for every $0<t<T$. If \eqref{eq1 exist1} holds in the case $S=T$ then $u$ can be extended continuously
	to $[0, T]\times\overline{\Omega}$ and $(u(T, z), \mu(T, z))$ is admissible.
\end{The}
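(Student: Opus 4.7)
The plan is to construct a continuous viscosity solution of \eqref{PMA_sol} by Perron's method and then establish admissibility of each time slice. First I invoke Proposition \ref{prop.subbarrier} to obtain continuous $\epsilon$-subbarriers that are Lipschitz in $t$; combined with the assumed continuous $\epsilon$-superbarriers, Lemma \ref{lem perron2} produces a discontinuous viscosity solution $\phi_S := \sup\{v : v \in S\}$, where $S$ denotes the family of continuous subsolutions of \eqref{PMA_sol} which are Lipschitz in $t$.

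To prove that $\phi_S$ is continuous I would exploit both \eqref{eq1 exist1} and \eqref{eq2 exist1} so that both parts (a) and (b) of the weak comparison Theorem \ref{the weak compa} are available. For lower semi-continuity, each $w \in S$ is continuous in $t$ at every point, so the ``in particular'' clause of Theorem \ref{the weak compa} applied with $u = w$ and $v = (\phi_S)_*$ yields $w \leq (\phi_S)_*$; taking the supremum over $S$ gives $\phi_S \leq (\phi_S)_*$, whence $\phi_S = (\phi_S)_*$. Next I establish that $\phi_S(\cdot, z_0)$ is continuous in $t$ at every interior $t_0$: applying part (a) of Theorem \ref{the weak compa} with $u = (\phi_S)^*$ and $v = \phi_S$, the bound $(\phi_S)^*(t+s, z) \leq \phi_S(t, z) + \epsilon$ descends via $\phi_S \leq (\phi_S)^*$ to $\phi_S(t+s, z) \leq \phi_S(t, z) + \epsilon$ for $s \in (0, \delta)$, forcing $\limsup_{s \to 0^+} \phi_S(t+s, z) \leq \phi_S(t, z)$; combined with lower semi-continuity this gives right-continuity, and part (b) supplies left-continuity. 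Finally, reapplying the ``in particular'' clause with $v = \phi_S$ (now continuous in $t$ pointwise) yields $(\phi_S)^* \leq \phi_S$, so $\phi_S$ is continuous and hence a genuine viscosity solution. Uniqueness then follows by applying the same ``in particular'' clause symmetrically between any two continuous solutions.

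For admissibility of $(u(t_0, z), \mu(t_0, z))$ at each $t_0 \in (0, T)$, I would take $u_\epsilon(z) = u(t_0, z) + \epsilon/2 \in C(\overline{\Omega})$ and derive the viscosity bound $(dd^c u_\epsilon)^n \leq e^{C_\epsilon}\mu(t_0, z)$ from the parabolic supersolution property of $u$ restricted to the slice $\{t = t_0\}$, following the test-function conversion used in the proof of Corollary \ref{cor exist admi}. Given an elliptic lower test function $q(z)$ at $z_0$ for $u(t_0, \cdot)$, I consider the parabolic perturbation $\hat q(t, z) = q(z) - \alpha(t - t_0)^2$ for large $\alpha$, apply the parabolic supersolution inequality at the relocated minimum $(t_\alpha^*, z_\alpha^*)$ of $u - \hat q$, and let $\alpha \to \infty$, controlling $\partial_t \hat q(t_\alpha^*, z_\alpha^*) = -2\alpha(t_\alpha^* - t_0)$ via the uniform modulus of continuity of $u$ in $t$ supplied by Theorem \ref{the weak compa} applied with $u$ in both roles. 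If \eqref{eq1 exist1} also holds at the endpoint $S = T$, the same Perron/continuity argument extends up to $t = T$, giving the continuous extension of $u$ to $[0, T] \times \overline{\Omega}$ and admissibility of the terminal slice by the same reasoning.

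The main obstacle I anticipate is the admissibility step: under the hypothesis of Theorem \ref{the exist1} we obtain only a modulus of continuity of $u$ in $t$, not a Lipschitz bound, so the control of $\alpha(t_\alpha^* - t_0)$ as $\alpha \to \infty$ is delicate. A fallback strategy is to approximate $f$ by $f_\delta := f + \delta > 0$, solve the perturbed problem via the strong comparison Theorem \ref{compa.the}(a) and Lemma \ref{lem perron} to obtain continuous solutions $u_\delta$, verify that \eqref{eq1 exist1}--\eqref{eq2 exist1} transfer to $f_\delta$, and pass to the monotone limit $\delta \to 0^+$ to recover admissibility from the Lipschitz-in-$t$ regularity available at each positive $\delta$.
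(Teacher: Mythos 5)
Your Perron construction and the continuity argument running both parts of Theorem \ref{the weak compa} against the upper and lower envelopes are essentially the paper's proof of existence and uniqueness, and that part is sound. The genuine gap is the admissibility of the time slices. Your primary argument --- testing $u(t_0,\cdot)$ with an elliptic $q(z)$, perturbing to $\hat q(t,z)=q(z)-\alpha(t-t_0)^2$ and sending $\alpha\to\infty$ --- does not close, for exactly the reason you flag: with only a modulus of continuity $\omega$ in $t$ one gets $\alpha(t_\alpha^*-t_0)^2\lesssim\omega(|t_\alpha^*-t_0|)$, which does not bound $\alpha|t_\alpha^*-t_0|$, and the sign of $t_\alpha^*-t_0$ is uncontrolled; when $t_\alpha^*<t_0$ the factor $e^{\partial_t\hat q(t_\alpha^*,z_\alpha^*)}=e^{-2\alpha(t_\alpha^*-t_0)}$ blows up and the supersolution inequality yields no upper bound on $(dd^cq(z_0))^n$. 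The fallback is not available either: the Lipschitz-in-$t$ results of the paper (Propositions \ref{rgltionint1.prop} and \ref{rgltionint2.prop}, Corollary \ref{rgltionint.cor}) require $\mu$ to be monotone or independent of $t$, not merely positive, so replacing $f$ by $f+\delta$ does not produce solutions $u_\delta$ that are Lipschitz in $t$; and even if it did, the constant $C_\epsilon$ in the admissibility bound would a priori degenerate as $\delta\to0^+$.

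The missing tool is Lemma \ref{rglzingint.lem}. Apply the time inf-convolution $u_k(t,z)=\inf_s\{u(s,z)+k|s-t|\}$ to the solution viewed as a supersolution: $u_k$ is $k$-Lipschitz in $t$, converges uniformly to $u$ on compact time intervals (by the continuity already established), and is a supersolution of the flow with density $\mu^k(t,z)=\sup_{|s-t|<A/k}\mu(s,z)$. Restricting to the slice $t=t_0$ and using $|\partial_t|\leq k$ gives $(dd^cu_k(t_0,\cdot))^n\leq e^{k+\sup F}\mu^k(t_0,\cdot)$ in the viscosity sense, while \eqref{eq1 exist1}--\eqref{eq2 exist1} give $\mu^k(t_0,\cdot)\leq(1+\epsilon)\mu(t_0,\cdot)$ on each $K\Subset\Omega$ for $k$ large (admissibility being local by Theorem \ref{mainadmissible}); adding a small constant to $u_k(t_0,\cdot)$ then produces the required $u_\epsilon$. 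This is precisely how Corollary \ref{cor exist admi} and the paper's treatment of the terminal slice proceed. Finally, your endpoint claim is too quick: Perron yields a solution only on $(0,T)$, and to extend continuously to $t=T$ the paper must prolong $\mu,F,\varphi$ past $T$ by their terminal values, apply Theorem \ref{the weak compa}(a) to the extended problem for one inequality, and construct an explicit subsolution of the form $u(s,z)+\tfrac{\epsilon}{3}\rho(z)-(C+n\log\tfrac{2}{\epsilon})(t-s)-\tfrac{\epsilon}{3}$ for the other, so as to obtain a two-sided uniform Cauchy estimate as $t\nearrow T$; this does not follow from ``the same Perron/continuity argument.''
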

\begin{proof}
	Denote by $u$ the supremum of continuous subsolutions to \eqref{PMA}.
	Then, by Proposition \ref{prop.subbarrier} and Lemma \ref{lem perron}, $u=u_*$ is a
	supersolution to \eqref{PMA} and $u^*$ is a subsolution to \eqref{PMA}.
	
	By Theorem \ref{the weak compa}, for every $(t, z)\in (0, T)\times\Omega$,
	for each $\epsilon>0$, there exists $\delta>0$ such that
	\begin{center}
		$u(t, z)+\epsilon\geq u^*(s, z)\geq u(s, z)$,
	\end{center}
	for all $0<|s-t|<\delta$. Hence, by using the fact
	that $u$ is lower semi-continuous, we get that $u$ is continuous in $t$. 
	
	Then, by Theorem \ref{the weak compa},
	for every $\epsilon>0$ and
	 $(t, z)\in (0, T)\times\Omega$,
	\begin{center}
		$u^*(t, z)\leq \lim\limits_{s\rightarrow t}u(s, z)+\epsilon=u(t, z)+\epsilon.$
	\end{center}
	Letting $\epsilon\rightarrow 0$, we obtain $u^*=u$.
	
	Thus $u$ is a solution to \eqref{PMA}. The uniqueness of solution holds due to Theorem
	\ref{the weak compa}.
	
	Now, we consider the case where \eqref{eq1 exist1} holds in the case $S=T$. For every
	$t>T$, we define
	\begin{center}
	$\mu (t, .)=\mu (T, .)$, $F(t, . , .)=F(T, ., .)$ and $\varphi (t, .)=\varphi (T, .)$.
	\end{center}
	Denote by $\tilde{u}$ the supremum of continuous subsolutions to the problem
	\begin{equation}\label{PMA extend}
	\begin{cases}
	e^{\dt u+F(t, z, u)}\mu(t, z)=\MAu\qquad\mbox{ in }\qquad (0,\infty)\times\Omega,\\
	u=\varphi\qquad\mbox{in}\qquad [0, \infty)\times\partial\Omega,\\
	u(0, z)=u_0(z)\qquad\mbox{in}\quad\bar{\Omega},
	\end{cases}
	\end{equation}
	Then, it follows from Lemma \ref{lem short long} that $\tilde{u}=u$ in
	$[0, T)\times \bar{\Omega}$. By the continuity of $u$, we also have
	$\tilde{u}^*=u$ in 	$[0, T)\times \bar{\Omega}$. 
	
	By Lemma \ref{lem perron}, $\tilde{u}^*$ is a supersolution to \eqref{PMA extend}.
	Applying the part a) of Theorem \ref{the weak compa}, for every $\epsilon>0$, there
	exists $\delta_1>0$ such that
	\begin{equation}\label{eq1 proofexist1}
	u(t, z)=\tilde{u}^*(t, z)<\tilde{u}(s, z)+\epsilon=u(s, z)+\epsilon,
	\end{equation}
	for every $T-\delta_1<s<t<T$ and $z\in\Omega$.
	Choose $\rho\in C^2(\overline{\Omega})\cap PSH(\Omega)$ such that $-1\leq\rho\leq 0$ and
	$dd^c\rho>0$. Choose $C\gg 1$ such that 
	\begin{center}
		$(dd^c\rho)^n\geq e^{-C+\sup F(., ., \sup\varphi)}\sup fdV$.
	\end{center}
Here, we recall that $\mu(t, z)=f(t, z)dV$.

	Then, for every $1>\epsilon>0$, there exists $0<\delta_2<\delta_1$ such that, for every
	$s\in (T-\delta_2, T)$, the function
	\begin{center}
	$u(s, z)+\dfrac{\epsilon}{3}\rho (z)
	-(C+n\log\dfrac{2}{\epsilon})(t-s)-\dfrac{\epsilon}{3}$,
	\end{center}
is a subsolution to
\begin{equation}
e^{\dt w+F(t, z, w)}\mu(t, z)=(dd^c w)^n,
\end{equation}
in $(s, T)\times\Omega$.

Choose $\delta_2\ll 1$ such that $|\varphi (t, z)-\varphi (s, z)|<\epsilon/3$
for every $z\in\partial\Omega$ and $t, s\in [T-\delta_2, T]$. Using Theorem
\ref{compa.the}, for every $T-\delta_2<s<t<T$ and $z\in\Omega$,
\begin{equation}
u(s, z)+\dfrac{\epsilon}{3}\rho (z)-(C+n\log\dfrac{2}{\epsilon})t-\dfrac{\epsilon}{3}
<u( t, z).
\end{equation}
Choose $0<\delta_3<\delta_2$ such that
 $(C+n\log\dfrac{2}{\epsilon})\delta_3<\dfrac{\epsilon}{3}$. We have
 \begin{equation}\label{eq2 proofexist1}
 u(s, z)-\epsilon<u(t, z),
 \end{equation}
 for every $T-\delta_3<s<t<T$ and $z\in\Omega$.
 
 Combining \eqref{eq1 proofexist1} and \eqref{eq2 proofexist1}, we get that
 $u(t, z)$ converges uniformly to a continuous plurisubharmonic function $u(T, z)$
 as $t\nearrow T$. By using the condition \eqref{eq1 exist1} and applying
 Lemma \ref{rglzingint.lem}, we obtain that $(u(T, z), \mu(T, z))$ is admissible.
\end{proof}
\begin{Cor}\label{cor:exits_1}
	If there exist $0\leq f_0, f_1\in C(\Omega)$ such that $(u_0, f_0dV)$ is admissible and
	$f(t, z)=tf_1(z)+(T-t)f_0(z)$ then \eqref{PMA} has a unique solution.
\end{Cor}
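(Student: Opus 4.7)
The plan is to reduce to Theorem \ref{the exist1}, which yields existence and uniqueness of a viscosity solution to \eqref{PMA} as soon as (a) for every $\epsilon>0$ there is a continuous $\epsilon$-superbarrier for \eqref{PMA}, and (b) the two-sided bounds \eqref{eq1 exist1} and \eqref{eq2 exist1} on $f$ hold locally uniformly on $[R,S]\times K$ for every $K\Subset\Omega$ and every $0<R<S<T$.

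For (a), I would observe that $\mu(0,z)=Tf_0(z)\,dV$, so if $u_\epsilon$ and $C_\epsilon$ witness the admissibility of $(u_0,f_0\,dV)$, then the same $u_\epsilon$ paired with the constant $C_\epsilon-\log T$ witnesses admissibility of $(u_0,\mu(0,\cdot))$ in the sense of Definition \ref{def admissible}. Proposition \ref{prop.superbarrier} then produces the required continuous $\epsilon$-superbarrier (in fact Lipschitz in $t$).

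For (b), I would fix $K\Subset\Omega$, $0<R<S<T$ and $\epsilon>0$, set $\rho:=\min(R,\,T-S)>0$, and exploit the structural lower bound
\[
f(t,z)=tf_1(z)+(T-t)f_0(z)\geq Rf_1(z)+(T-S)f_0(z)\geq \rho\bigl(f_0(z)+f_1(z)\bigr)
\]
valid on $[R,S]\times\Omega$. Combined with the identity $|f(t+s,z)-f(t,z)|=s|f_1(z)-f_0(z)|\leq s(f_0(z)+f_1(z))$, this yields $|f(t+s,z)-f(t,z)|\leq (s/\rho)f(t,z)$. Taking $\delta:=\min\{\epsilon\rho/(1+\epsilon),\,T-S\}$ then forces both \eqref{eq1 exist1} and \eqref{eq2 exist1} for $0<s<\delta$, uniformly in $(t,z)\in[R,S]\times K$.

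The only delicate point I anticipate is the behaviour on the degeneracy locus of $f$, where the ratio estimate $|f(t+s,z)-f(t,z)|\leq (s/\rho)f(t,z)$ is formally an indeterminate $0/0$. This is resolved by the observation that if $f(t_0,z)=0$ for some interior $t_0\in(0,T)$, then the non-negative affine function $f(\cdot,z)$ must be identically zero on $[0,T]$, forcing $f_0(z)=f_1(z)=0$ and therefore $f(t+s,z)=0$, so both inequalities hold trivially on the zero set. Once this is cleared, Theorem \ref{the exist1} applies and delivers the desired unique solution.
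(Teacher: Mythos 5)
Your proposal is correct and follows exactly the route the paper intends: the corollary is stated as a direct consequence of Theorem \ref{the exist1}, and you verify both of its hypotheses — the $\epsilon$-superbarrier via admissibility of $(u_0,\mu(0,\cdot))$ (which follows from admissibility of $(u_0,f_0dV)$ after rescaling the constant, plus Proposition \ref{prop.superbarrier}) and the two ratio conditions \eqref{eq1 exist1}--\eqref{eq2 exist1} via the affine structure of $f$ in $t$. Your handling of the degeneracy locus, where $f(t_0,z)=0$ for an interior $t_0$ forces $f(\cdot,z)\equiv 0$, correctly closes the only potential gap.
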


\begin{Rem}\label{rem 2admissible}
	 By combining Theorem \ref{the exist1} and Lemma \ref{rglzingint.lem}, 
	if $\mu$ is independent of $t$ then  \eqref{PMA} admits a solution iff $(u_0, \mu)$
	is  admissible. Hence, by Corollary \ref{cor exist admi}, Definition \ref{def admissible}
	is equivalent to the definition of the admissibility in \cite{EGZ15b}.
\end{Rem}
\begin{The}\label{the remove}
	Let $u\in C^0([0, T]\times\overline{\Omega})$ such that 
	\begin{equation}\label{PMA remove}
	e^{\dt u+F(t, z, u)}\mu(t, z)=\MAu,
	\end{equation}
	in $((0, S)\cup (S, T))\times\Omega$ in the viscosity sense for some $S\in (0, T)$.
	
	If $(u (S, z), \mu (S, z))$ is admissible then $u$ satisfies \eqref{PMA remove}
	in the viscosity sense in $(0, T)\times\Omega$.
\end{The}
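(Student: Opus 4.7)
The conclusion is local in $(t,z)$, and by hypothesis the equation holds in the viscosity sense on $((0,S)\cup(S,T))\times\Omega$. So my task reduces to verifying both the viscosity sub- and super-solution inequalities for $u$ at every point $(S,z_0)$ with $z_0\in\Omega$.

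For the subsolution inequality at $(S,z_0)$, I would not use admissibility. Given a $C^{(1,2)}$ upper test function $q$ for $u$ at $(S,z_0)$, set
$\phi_{\epsilon,c}(t,z) := q(t,z) + \epsilon\bigl(|z-z_0|^2 + (t-S)^2\bigr) + c(t-S)$
with small $\epsilon,c>0$. A direct calculation (restricting to the ray $z=z_0$, $t<S$, where $u-\phi_{\epsilon,c}$ exceeds $c^2/(4\epsilon)$) shows that $u-\phi_{\epsilon,c}$ attains a strictly positive maximum $M_{\epsilon,c}$ on a fixed small neighborhood of $(S,z_0)$ at a point $(t_{\epsilon,c},z_{\epsilon,c})$ with $t_{\epsilon,c}<S$, and $(t_{\epsilon,c},z_{\epsilon,c})\to(S,z_0)$ as $c\to 0^+$. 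Applying the subsolution inequality of $u$ at $(t_{\epsilon,c},z_{\epsilon,c})\in(0,S)\times\Omega$ to the admissible upper test function $\phi_{\epsilon,c}+M_{\epsilon,c}$, and letting first $c\to 0$ and then $\epsilon\to 0$, produces the subsolution inequality at $(S,z_0)$ by continuity of $q$, $F$, and $\mu$.

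For the supersolution inequality at $(S,z_0)$, admissibility of $(u(S,\cdot),\mu(S,\cdot))$ enters crucially. The degenerate case $(dd^cq_S(z_0))^n=0$ is trivially satisfied, so I may assume $dd^cq_S(z_0)>0$ and therefore $dd^cq>0$ in a neighborhood. Adapting Proposition \ref{prop.superbarrier} with $S$ in place of $0$ as the initial time, admissibility produces, for every $\eta>0$, a continuous supersolution $\bar u_\eta$ of \eqref{PMA remove} on $[S,T)\times\bar{\Omega}$, Lipschitz in $t$, with $\bar u_\eta(S,z)\le u(S,z)+\eta$ and $\bar u_\eta\ge\varphi$ on the lateral boundary. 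Theorem \ref{compa.the}, case (c), applied on $(S,T)\times\Omega$ yields $u\le\bar u_\eta$ there, which provides the one-sided Lipschitz-type control on $u$ from above at $t=S^+$. Dually to the subsolution step, I then consider
$\psi_{\epsilon,c}(t,z) := q(t,z) - \epsilon(t-S)^2 - c(t-S),$
which preserves $dd^c\psi_{\epsilon,c} = dd^cq\ge 0$ near $(S,z_0)$. Using the upper barrier $\bar u_\eta$ (for $t\ge S$) together with the solution property of $u$ on $(0,S)$ (for $t<S$), one shows that for suitable $\epsilon,c>0$ the infimum of $u-\psi_{\epsilon,c}$ on a fixed small neighborhood of $(S,z_0)$ is strictly negative and is attained at a point $(t_{\epsilon,c},z_{\epsilon,c})$ with $t_{\epsilon,c}\ne S$ converging to $(S,z_0)$. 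Applying the supersolution property of $u$ at that interior point, where it holds by hypothesis, and passing to the limit yields the supersolution inequality at $(S,z_0)$.

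The main obstacle is the supersolution case: without the admissibility hypothesis, $u$ could in principle develop a convex kink in $t$ at $(S,z_0)$ with unbounded right-slope, in which case no perturbation of the lower test function would localize the minimum of $u-\psi_{\epsilon,c}$ strictly away from $t=S$. The Lipschitz-in-$t$ upper barrier $\bar u_\eta$ furnished by admissibility rules out precisely this pathology and provides the one-sided modulus of continuity required for the perturbation argument to succeed. A secondary technical subtlety is preserving the condition $dd^c\ge 0$ on lower test functions through the perturbation, which I handle by perturbing only in the time variable so that the complex Hessian is unchanged.
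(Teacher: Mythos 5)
Your overall strategy---verifying the two viscosity inequalities directly at each point $(S,z_0)$ by perturbing the test function so that the contact point moves into $\{t\neq S\}$---is legitimate and genuinely different from the paper's proof (which constructs global sub- and supersolutions within $\epsilon$ of $u$ on all of $(0,T)\times\Omega$ and concludes by the stability Lemma \ref{lem.seq}). But the linear tilt $c(t-S)$ is too weak to carry out the localization. For the subsolution step, on the ray $z=z_0$, $t=S-s$, one has $u-\phi_{\epsilon,c}=(u-q)(S-s,z_0)-\epsilon s^2+cs\leq c^2/(4\epsilon)$ because $u-q\leq 0$, so the value can never ``exceed $c^2/(4\epsilon)$''; worse, if $u$ has a concave kink in time at $S$, say $(u-q)(S-s,z)\leq -Ks$ for $z$ near $z_0$, then for every $c<K$ the maximum of $u-\phi_{\epsilon,c}$ over a small neighbourhood equals $0$ and is attained only at $(S,z_0)$ --- exactly the point where the hypothesis gives no information. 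The mirror-image failure occurs for $\inf(u-\psi_{\epsilon,c})$ in the supersolution step when $(u-q)(S-s,z)\geq cs$ (a convex kink). The standard repair is the singular ``terminal-time'' perturbation $q\mapsto q\pm\eta/(S-t)$ on $\{t<S\}$ (cf. \cite{CIL92,IS13}): it blows up as $t\to S^-$, so the extremum over $\{t\leq S\}$ is forced strictly into $\{t<S\}$ regardless of how $u-q$ behaves, and its contribution $\pm\eta/(S-t)^2$ to $\partial_t$ of the test function has the favourable sign in both inequalities, so it can be discarded before passing to the limit.

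The second gap is that admissibility, as you invoke it, does not close the supersolution case. The Lipschitz-in-$t$ barrier gives $u(S+s,z)\leq u(S,z)+\eta+L_\eta s$, an upper bound on $u$ for $t>S$; but the obstruction to localizing $\inf(u-\psi_{\epsilon,c})$ away from $t=S$ is the behaviour of $u-q$ on the side $t<S$ (equivalently a convex kink, i.e.\ the left time-derivative of $u$ lying below the right one), and this is not excluded by an upper bound on $u(S+s,\cdot)-u(S,\cdot)$: the model case $u=u(S,\cdot)+|t-S|$ satisfies your barrier inequality and still pins the infimum at $(S,z_0)$. So the sentence ``one shows that \ldots the infimum \ldots is attained at a point with $t_{\epsilon,c}\neq S$'' is precisely the missing step, and your diagnosis of the pathology (unbounded right-slope) is not the relevant one. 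For contrast, the paper makes admissibility do concrete work: it produces a psh function $u^{\epsilon}$ uniformly close to $u(S,\cdot)$ with $(dd^cu^{\epsilon})^n\leq e^{C_2+\inf F}\mu(S,\cdot)$, glues $\min\{u+\mathrm{ramp},\,u^{\epsilon}+C_2(t-S+\delta_2)+2\epsilon/3\}$ across $t=S$ into a genuine supersolution on $(0,T)\times\Omega$ within $\epsilon$ of $u$, and invokes Lemma \ref{lem.seq}; the subsolution side is handled by adding a correction that is steeply decreasing in $t$ near $S^-$ and psh in $z$. If you repair your argument with the $\eta/(S-t)$ perturbation, note that the resulting left-sided argument uses only the equation on $(0,S)\times\Omega$; you should then examine carefully where, if anywhere, admissibility actually enters your scheme.
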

\begin{proof}
	By Lemma \ref{lem.seq}, it remains to show that,
	 for every $\epsilon>0$, there exist a subsolution
	 $\underline{u}_{\epsilon}$ and a supersolution $\overline{u}_{\epsilon}$ to
	 \eqref{PMA remove} in $(0, T)\times\Omega$ such that 
	 \begin{center}
	 	$|u(t, z)-\underline{u}_{\epsilon}(t, z)|<\epsilon$ 
	 	and $|u(t, z)-\overline{u}_{\epsilon}(t, z)|<\epsilon$,
	 \end{center}
 for every $(t, z)\in (0, T)\times\Omega$. 
 
 Let $\rho\in C^2(\overline{\Omega})\cap PSH(\Omega)$ such that $-1\leq \rho\leq 0$ and
 $dd^c\rho>0$. Let $C_1\gg 1$ such that
 \begin{center}
 	$(dd^c\rho)^n\geq e^{-C_1+\sup F(., ., \sup u)}\sup f dV.$
 \end{center}
Here, we recall that $\mu(t, z)=f(t, z)dV$.

Denote, for every $0\leq t\leq S$,
\begin{center}
	$h(t)=\sup\{|u(S-s, z)-u(S, z)|: z\in\Omega, s\in [0, t] \}$.
\end{center}
Then $h$ is a non-decreasing function with $h(0)=0$.
 Moreover, by Lemma \ref{lem continuity}, $h$ is continuous.

Let $1>\epsilon>0$. Then there exists $\delta_1\in (0, S)$ such that 
\begin{center}
	$\max\{h(\delta_1), (C_1+n\log\dfrac{3}{\epsilon})\delta_1 \}<\dfrac{\epsilon}{3}.$
\end{center}
For $(t, z)\in (0, S)\times\overline{\Omega}$, we denote
\begin{center}
	$\tilde{h}(t,z)= h(S-t)-(C_1+n\log\dfrac{3}{\epsilon})(t-S+\delta_1)
	+\dfrac{\epsilon(t-S+\delta_1)}{3\delta_1}\rho (z).$
\end{center}

For every $(t, z)\in [0, T)\times\overline{\Omega}$, we define

$$h_{\epsilon}(t, z)=\left\{\begin{matrix}
h(\delta_1)  &   \mbox{ if }t\in [0, S-\delta_1],\\ 
\tilde h(t,z) &  \mbox{ if } t\in [S-\delta_1, S], \\ 
 \dfrac{\epsilon \rho (z)}{3}-(C_1+n\log\dfrac{3}{\epsilon})\delta_1 &  \mbox{ if } t\in [S, T),
\end{matrix}\right.$$

and
\begin{center}
	$\underline{u}_{\epsilon}(t, z)=u(t, z)+h_{\epsilon}(t, z)-\dfrac{\epsilon}{3}.$
\end{center}

Then 
$|u(t, z)-\underline{u}_{\epsilon}(t, z)|<\epsilon$ for every 
$(t, z)\in (0, T)\times\Omega$ and we will show that  $\underline{u}_{\epsilon}$ is 
a subsolution to \eqref{PMA remove} in $(0, T)\times\Omega$.

By the assumption and by the fact that $h_{\epsilon}(t, z)$ is non-increasing in $t$ and plurisubharmonic in $z$,
 we have $\underline{u}_{\epsilon}$ is a subsolution to \eqref{PMA remove}
 in $((0, S)\cup (S, T))\times\Omega$.
 It remains to show that, for every $z_0\in\Omega$, for every upper test function
$p$ for $\underline{u}_{\epsilon}$ at $(S, z_0)$,
\begin{equation}\label{eq test S}
(dd^c p(S, z_0))^n\geq e^{\dt p(S, z_0) +F(S, z_0, p(S, z_0))}\mu(S, z_0).
\end{equation}
Note that, by the definition of $h(t)$, we have
	$u(S-t, z)+h(t)\geq u(S, z),$
for every $(t, z)\in (0, \delta_1)\times\Omega$. Then,
\begin{center}
	$\underline{u}_{\epsilon}(S-t, z)
	\geq \underline{u}_{\epsilon}(S, z)+(C_1+n\log\dfrac{3}{\epsilon})t-t\rho (z)
		\geq \underline{u}_{\epsilon}(S, z)+(C_1+n\log\dfrac{3}{\epsilon})t$,
\end{center}
for every $(t, z)\in (0, \delta_1)\times\Omega$. Hence, for every $z_0\in\Omega$, for every upper test function
$p$ for $\underline{u}_{\epsilon}$ at $(S, z_0)$, we have
\begin{equation}\label{eq t test S}
\dt p(S, z_0)\leq -(C_1+n\log\dfrac{3}{\epsilon}).
\end{equation}
By \cite[Corollary 3.7]{EGZ15b}, $u(t, z)$ is plurisubharmonic in $z$ for every $t\in (0, S)$.
Moreover, since $u$ is continuous on the compact set $[0, S]\times\overline{\Omega}$,
 we have $u(t, z)$ converges
uniformly to $u(S, z)$ as $t\nearrow S$. 
 Then $u(S, z)=\lim\limits_{t\to S^-}u(t, z)$
is plurisubharmonic. Since $p(S, z)-h_{\epsilon}(S, z)+\dfrac{\epsilon}{3}$ is a upper test function
for $u(S, z)$ at $z_0$, we have, by \cite[Proposition 1.3]{EGZ},
\begin{center}
$dd^c(p(S, z)-h_{\epsilon}(S, z))\geq 0$,
\end{center}
at $z_0$. Hence, we have, at $(S, z_0)$
\begin{equation}\label{eq z test S}
	(dd^cp)^n\geq (dd^ch_{\epsilon})^n=\dfrac{\epsilon^n}{3^n}(dd^c\rho)^n
	\geq \dfrac{\epsilon^n}{3^n}(dd^c\rho)^ne^{-C_1+\sup F(., ., \sup u)}\sup f dV.
\end{equation}
Combining \eqref{eq t test S} and \eqref{eq z test S}, we get \eqref{eq test S}. Thus,
 $\underline{u}_{\epsilon}$ is 
a subsolution to \eqref{PMA remove} in $(0, T)\times\Omega$.

Now, we construct $\overline{u}_{\epsilon}$.
Since $(u(S, z), \mu (S, z))$ is admissible, there exist $C_2>0$ and 
$u^{\epsilon}\in C(\overline{\Omega})\cap PSH(\Omega)$ such that
$u(S, z)+\dfrac{\epsilon}{3}< u^{\epsilon}(z)< u(S, z)+\dfrac{2\epsilon}{3}$ and
$(dd^cu^{\epsilon})^n\leq e^{C_2+\inf F(., ., \inf u)}\mu (S, z)$.

Let $\delta_2>0$ such that $C_2\delta_2<\dfrac{\epsilon}{3}$ and 
$\dfrac{\epsilon}{3}<u(t, z)-u^{\epsilon}(z)<\dfrac{2\epsilon}{3}$ for every
$(t, z)\in (S-\delta_2, S)\times\Omega$.
For every $(t, z)\in [0, T)\times\overline{\Omega}$, we define  the function $\overline{u}_{\epsilon}$ by
\begin{center}
	$
	\begin{cases}
	\begin{array}{ll}
	u(t, z) &\mbox{ if } t\in [0, S-\delta_2],\\
	\min\{u(t, z)+\dfrac{\epsilon}{\delta_2}(t-S+\delta_2),
	 u^{\epsilon}(t, z)+C_2(t-S+\delta_2)+\dfrac{2\epsilon}{3}\}
	&\mbox{ if } t\in [S-\delta_2, S],\\
	\min\{u^{\epsilon}(t, z)+C_2(t-S+\delta_2)+\dfrac{2\epsilon}{3}, u(t, z)+\epsilon \}
	&\mbox{ if  } t\in [S, T).
	\end{array}
	\end{cases}$
\end{center}
Then $\overline{u}_{\epsilon}$ is 
a supersolution to \eqref{PMA remove} in $(0, T)\times\Omega$ satisfying 
$|u(t, z)-\underline{u}_{\epsilon}(t, z)|<\epsilon$ for every 
$(t, z)\in (0, T)\times\Omega$. 
The proof is completed.
\end{proof}
By using Theorem \ref{the exist1} and Theorem \ref{the remove}, we have the following:
\begin{The}\label{the exist2}
 Assume that there exist $t_1, ..., t_m$ with $0=t_0<t_1<...<t_m=T$ satisfying
	\begin{itemize}
		\item [i) ]For every $K\Subset\Omega$, for every $\epsilon>0$
		and $t_{k-1}<R<t_k (1\leq k\leq m)$, there exists $0<\delta\ll 1$
		such that
	\begin{equation}\label{eq1 exist2}
	(1+\epsilon)f(t+s, z)\geq f(t, z), \
	\end{equation}
	for all $z\in K, 0<s<\delta,
	R<t<t+s<t_k$.
	\item[ii)] For every $K\Subset\Omega$, for every $\epsilon>0$
	and $t_{k-1}<R<S<t_k (1\leq k\leq m)$, there exists $0<\delta\ll 1$	such that
	\begin{equation}\label{eq2 exist2}
	(1+\epsilon)f(t, z)\geq f(t+s, z), 
	\end{equation}
	for all $z\in K, 0<s<\delta, R<t<t+s<S$.
		\end{itemize}
		If  for every $\epsilon>0$, there exists a continuous $\epsilon$-superbarrier for
	\eqref{PMA_sol} then there exists a
	unique solution to \eqref{PMA_sol}.
	
	In particular, if $(u_0(z), \mu (0, z))$ is admissible then \eqref{PMA_sol} admits a 
	unique solution.
\end{The}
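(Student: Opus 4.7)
The plan is to solve the Cauchy-Dirichlet problem \eqref{PMA_sol} piecewise on each subinterval $[t_{k-1}, t_k] \times \overline{\Omega}$ by means of Theorem \ref{the exist1}, glue the resulting continuous solutions together, and finally invoke Theorem \ref{the remove} at each interior time $t_k$ to promote the glued function to a bona fide viscosity solution on all of $(0, T) \times \Omega$. The admissibility statement at the endpoint in Theorem \ref{the exist1} is the key mechanism that makes the induction propagate.

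\emph{Base case.} On $[0, t_1] \times \overline{\Omega}$, conditions (i) and (ii) of Theorem \ref{the exist2} (with $k = 1$) are exactly the hypotheses of Theorem \ref{the exist1} applied with the final time replaced by $t_1$; moreover condition (i) is valid up to $S = t_1$. Combined with the assumed continuous $\epsilon$-superbarriers (whose existence on $[0, t_1)$ follows by restriction), Theorem \ref{the exist1} yields a unique continuous viscosity solution $u^{(1)}$ on $[0, t_1] \times \overline{\Omega}$ with $(u^{(1)}(t_1, \cdot), \mu(t_1, \cdot))$ admissible.

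\emph{Inductive step.} Suppose continuous pieces $u^{(1)}, \ldots, u^{(k-1)}$ have been produced, gluing into a continuous function $u$ on $[0, t_{k-1}] \times \overline{\Omega}$ that solves \eqref{PMA_sol} on each $(t_{j-1}, t_j) \times \Omega$, with $(u(t_{k-1}, \cdot), \mu(t_{k-1}, \cdot))$ admissible. Consider the Cauchy-Dirichlet problem on $[t_{k-1}, t_k]$ with new initial datum $u(t_{k-1}, \cdot)$: this is continuous on $\overline{\Omega}$, plurisubharmonic in $\Omega$ by \cite[Corollary 3.7]{EGZ15b}, and agrees with $\varphi(t_{k-1}, \cdot)$ on $\partial\Omega$. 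Admissibility of the pair $(u(t_{k-1}, \cdot), \mu(t_{k-1}, \cdot))$ allows Proposition \ref{prop.superbarrier} to furnish continuous $\epsilon$-superbarriers on $[t_{k-1}, t_k)$; together with the restrictions of (i), (ii) to this subinterval, Theorem \ref{the exist1} produces a continuous solution $u^{(k)}$ on $[t_{k-1}, t_k] \times \overline{\Omega}$ with $(u^{(k)}(t_k, \cdot), \mu(t_k, \cdot))$ admissible. Setting $u = u^{(k)}$ on $[t_{k-1}, t_k]$ yields a continuous extension of $u$ to $[0, t_k] \times \overline{\Omega}$ solving \eqref{PMA_sol} on $\bigcup_{j \leq k}(t_{j-1}, t_j) \times \Omega$.

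\emph{Conclusion.} After $m$ iterations, $u$ is continuous on $[0, T] \times \overline{\Omega}$, solves \eqref{PMA_sol} away from the seam times $t_1, \ldots, t_{m-1}$, and has $(u(t_k, \cdot), \mu(t_k, \cdot))$ admissible at each such $t_k$. Applying Theorem \ref{the remove} successively at $t_1, \ldots, t_{m-1}$ then promotes $u$ to a viscosity solution on all of $(0, T) \times \Omega$. Uniqueness follows from Theorem \ref{the weak compa}, whose hypothesis is guaranteed by condition (ii) on each subinterval combined with continuity of $u$ in $t$. The ``in particular'' clause reduces to the main statement: if $(u_0, \mu(0, \cdot))$ is admissible, Proposition \ref{prop.superbarrier} directly supplies the required continuous $\epsilon$-superbarriers. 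The principal obstacle is the bookkeeping that ensures admissibility propagates across every junction time so that both Proposition \ref{prop.superbarrier} (to restart the induction) and Theorem \ref{the remove} (to smooth out the gluing) can be applied at each $t_k$; once that is in place, the remaining steps are routine applications of the tools already assembled.
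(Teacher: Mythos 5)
Your proposal is correct and follows essentially the same route as the paper: inductively apply Theorem \ref{the exist1} on each subinterval $[t_{k-1},t_k]$ (using condition (i) up to the right endpoint to get continuity at $t_k$ and admissibility of $(u(t_k,\cdot),\mu(t_k,\cdot))$, which via Proposition \ref{prop.superbarrier} restarts the induction), then invoke Theorem \ref{the remove} at each seam $t_k$. The paper's own proof is just a terse statement of this induction plus the appeal to Theorem \ref{the remove}; your write-up supplies exactly the bookkeeping it leaves implicit.
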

\begin{proof}
	By applying Theorem \ref{the exist1} and using induction, there exists a unique
	function $u\in C([0, T)\times\overline{\Omega})$ satisfying
	\begin{itemize}
		\item $e^{\dt u+F(t, z, u)}\mu(t, z)=\MAu$ in $(t_{k-1}, t_k)\times\Omega$
		in the viscosity sense for $k=1, ..., m$.
		\item  $u=\varphi$ in $(0, T)\times\partial\Omega$.
		\item $u (0, z)=u_0(z)$ for every $z\in\Omega$.
	\end{itemize}
Moreover, $(u(t_k, z), \mu (t_k, z))$ is admissible for every $k=1,..., m-1$. 
Then, by using Theorem \ref{the remove}, we get that $u$ is the unique solution
to \eqref{PMA_sol}.
\end{proof}
\begin{Cor}\label{cor:exits_2}
	Assume that
	there exist  $0=t_0<t_1<...<t_m=T$ and $f_0, ..., f_m\in C(\Omega)$ such that
	\begin{itemize}
		\item $0\leq f_0\leq f_1\leq ...\leq f_m$;
		\item $(u_0, f_0dV)$ is admissible;
		\item $f(t, z)=\dfrac{(t-t_{k-1})f_{k}(z)+(t_k-t)f_{k-1}(z)}{t_k-t_{k-1}}$
		for every $k=1,...,m$ and $(t, z)\in [t_{k-1}, t_k]\times\Omega$.
	\end{itemize}
	Then \eqref{PMA} has a unique solution.
\end{Cor}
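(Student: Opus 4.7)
The plan is to reduce Corollary~\ref{cor:exits_2} directly to Theorem~\ref{the exist2}. The hypothesis that $(u_0, f_0\,dV) = (u_0, \mu(0,\cdot))$ is admissible, combined with Proposition~\ref{prop.superbarrier}, supplies continuous $\epsilon$-superbarriers for every $\epsilon > 0$. Thus the whole task reduces to verifying the two monotonicity-type conditions (i) and (ii) of Theorem~\ref{the exist2} for the given partition $0 = t_0 < t_1 < \ldots < t_m = T$.

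Condition (i) should be essentially free. On each interval $[t_{k-1}, t_k]$, $f(t,z)$ is the affine interpolation between $f_{k-1}(z) \leq f_k(z)$, hence non-decreasing in $t$. So $f(t+s,z) \geq f(t,z)$ whenever $R < t < t+s < t_k$, and any $\delta>0$ works.

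For condition (ii), fix $K \Subset \Omega$, $\epsilon > 0$, and $t_{k-1}<R<S<t_k$. The plan is to choose $\delta := \epsilon(R-t_{k-1}) > 0$. A direct computation from the affine formula gives
\[
f(t+s,z)-f(t,z) = \frac{s(f_k(z)-f_{k-1}(z))}{t_k-t_{k-1}}, \qquad f(t,z) \geq \frac{(t-t_{k-1})(f_k(z)-f_{k-1}(z))}{t_k-t_{k-1}}.
\]
When $f_k(z) > f_{k-1}(z)$, dividing yields the ratio bound $[f(t+s,z)-f(t,z)]/f(t,z) \leq s/(t-t_{k-1}) \leq s/(R-t_{k-1}) < \epsilon$, i.e.\ $(1+\epsilon)f(t,z) \geq f(t+s,z)$. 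The remaining cases are trivial: if $f_k(z) = f_{k-1}(z)$ the difference vanishes, and if $f(t,z) = 0$ at an interior point then the linearity and non-negativity of the two summands force $f_{k-1}(z) = f_k(z) = 0$, whence $f \equiv 0$ on $\{z\}\times[t_{k-1},t_k]$.

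With (i) and (ii) verified, Theorem~\ref{the exist2} immediately produces the unique solution. I do not anticipate any genuine obstacle; the only point needing a moment of care is the degenerate regime $f(t,z) = 0$ in (ii), which however collapses to the vacuous case $f \equiv 0$ on the relevant $z$-fibre.
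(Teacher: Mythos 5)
Your proposal is correct and follows exactly the route the paper intends: the corollary is stated as an immediate consequence of Theorem \ref{the exist2} (whose ``in particular'' clause handles the superbarriers via admissibility of $(u_0,f_0dV)=(u_0,\mu(0,\cdot))$), and your verification of conditions (i) and (ii) — monotonicity in $t$ for (i), and the ratio bound $s/(t-t_{k-1})<\epsilon$ from the affine formula for (ii), with the degenerate case $f_k(z)=f_{k-1}(z)$ handled trivially — is exactly the computation being left to the reader. No gaps.
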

\subsection{The proof of Theorem \ref{mainadmissible}}
The first conclusion of Theorem \ref{mainadmissible}
 holds due to Remark \ref{rem 2admissible}.
We now prove that admissibility is local.	
\begin{Prop}\label{admissible2}
	Let $g\geq 0$ be a bounded continuous function in $\Omega$ and $\nu=gdV$.
	Let $\phi\in PSH(\Omega)\cap C(\bar{\Omega})$. 
	If for every $z\in\Omega$, there exists an open neighborhood $U$ of $z$
	such that $(\phi, \nu)$ is admissible in $U$ then $(\phi, \nu)$
	is admissible in $\Omega$. 
\end{Prop}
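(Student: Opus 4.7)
The plan is to assemble $u_\epsilon$ from the locally provided admissibility data by a min-gluing, and then to upgrade the result to a continuous function on $\overline\Omega$. Fix $\epsilon>0$. For every $z\in\Omega$ local admissibility, together with part (i) of Theorem \ref{mainadmissible}, yields an open neighbourhood $U_z\Subset\Omega$ and a function $\psi_z\in PSH(U_z)\cap C(\overline{U_z})$ with $\phi\le\psi_z\le\phi+\epsilon/3$ on $U_z$ and $(dd^c\psi_z)^n\le e^{C_z}\nu$ in the viscosity sense. For every $z\in\partial\Omega$ the continuity of $\phi$ provides a ball $U_z$ centred at $z$ on which $\phi$ oscillates by less than $\epsilon/6$, and the constant function $\psi_z:=\phi(z)+\epsilon/4$ satisfies $\phi\le\psi_z\le\phi+\epsilon/3$ together with the Monge--Amp\`ere bound trivially, since $(dd^c\psi_z)^n=0$. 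Compactness of $\overline\Omega$ then yields a finite subcover $U_1,\ldots,U_m$ with uniform constant $C:=\max_iC_i$; I further choose shrunk neighbourhoods $V_i\Subset U_i$ that still cover $\overline\Omega$.

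Extending each $\psi_i$ to all of $\overline\Omega$ by $+\infty$ outside $\overline{U_i}$, the pointwise minimum $u^\flat:=\min_{i}\psi_i$ is lower semi-continuous, satisfies $\phi\le u^\flat\le\phi+\epsilon/3$ on $\overline\Omega$ (each point lies in some $V_i$, so $u^\flat$ is finite there and bounded above by the corresponding $\psi_i$), and is a viscosity supersolution of $(dd^c w)^n\le e^{C}\nu$ in $\Omega$ by the standard fact that the pointwise minimum of finitely many viscosity supersolutions is a viscosity supersolution. To obtain a continuous candidate I apply an infimal convolution
\[
 u^\flat_k(z):=\inf_{w\in\overline\Omega}\bigl\{u^\flat(w)+k|w-z|^2\bigr\},
\]
which is Lipschitz and, by the standard inf-convolution argument for viscosity supersolutions (the elliptic-in-space analogue of Lemma \ref{rglzingint.lem}), remains a viscosity supersolution of a slightly perturbed Monge--Amp\`ere inequality. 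For $k$ large the perturbation is absorbed into a slightly larger constant $C'$ and into the available $\epsilon/3$ slack in the bounds, producing $u_\epsilon\in C(\overline\Omega)$ with $\phi\le u_\epsilon\le\phi+\epsilon$ and $(dd^c u_\epsilon)^n\le e^{C'}\nu$ in the viscosity sense, which is precisely the global admissibility of $(\phi,\nu)$.

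The main obstacle is this continuity upgrade. The inf-convolution replaces the measure bound $e^{C}\nu=e^{C}g\,dV$ by a local supremum $e^{C}\bigl(\sup_{|y-z|<\delta_k}g(y)\bigr)dV$, which near the zero set $\{g=0\}$ may fail to be dominated by $e^{C'}\nu$ for any finite $C'$. Handling this requires either a more refined regularisation---for instance, solving a Dirichlet problem for $(dd^cw)^n=e^{C}\nu$ on each $U_i$ with carefully chosen boundary data so that the min-gluing is automatically continuous across $\partial V_i\cap\partial V_j$---or a local argument near $\{g=0\}$ which exploits the fact that each approximating measure $(dd^c\psi_i)_P^n$ already assigns zero mass to $\{g=0\}\cap U_i$ (a consequence of the absolute-continuity bound on the viscosity side and the continuous plurisubharmonic regularity granted by part (i)). Either route closes the argument, but is the delicate part of the proof.
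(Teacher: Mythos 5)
Your overall strategy (finite cover plus min-gluing of the local admissibility data) is the same as the paper's, but your proof does not close: the continuity upgrade, which you yourself flag as ``the delicate part'', is exactly where the real content lies, and neither of the two routes you sketch is carried out. Your own objection to the inf-convolution route is correct and fatal as stated --- the sup-convolution of $g$ near $\{g=0\}$ need not be dominated by $e^{C'}g$ for any finite $C'$ --- and the assertion ``either route closes the argument'' is not substantiated. So as written there is a genuine gap at the decisive step.

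The idea you are missing is the paper's penalty trick, which avoids any regularisation of the measure. Having covered a compact subset by balls $B(p_k,r_k)$ with $(\phi,\nu)$ admissible on $B(p_k,2r_k)$ and local candidates $\phi_{\epsilon,k}$, one sets
\[
\phi_{\epsilon}(z)=\min\Bigl\{\phi_{\epsilon,k}(z)+\tfrac{\epsilon|z-p_k|^2}{r_k^2}:\ |z-p_k|<2r_k\Bigr\}-\tfrac{\epsilon|z|^2}{\min_j r_j^2}.
\]
The added term $\epsilon|z-p_k|^2/r_k^2$ forces the $k$-th candidate to exceed the minimum near $\partial B(p_k,2r_k)$ (it is $\approx\phi+4\epsilon$ there, while some ball $B(p_j,r_j)$ containing $z$ contributes at most $\phi+2\epsilon$), so locally the minimum is taken over finitely many candidates that are all defined and continuous on a full neighbourhood --- continuity of the glued function is automatic, with no convolution. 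The subtracted global quadratic has $dd^c$ at least as large as that of each penalty term, so the net correction added to each $\phi_{\epsilon,k}$ has $dd^c\le 0$; consequently every lower test function with nonnegative complex Hessian for $\phi_\epsilon$ transfers to one for the active $\phi_{\epsilon,k}$, and the viscosity bound $(dd^c\phi_\epsilon)^n\le e^{\max_k C_{\epsilon,k}}\nu$ survives exactly, with no loss near $\{g=0\}$. The paper then treats the boundary separately, exhausting $\Omega$ by sublevel sets of a defining function and gluing with $\tilde\phi-\epsilon\rho/r_1$, where $\tilde\phi$ is the maximal psh function with boundary values $\phi$; your constant barriers at boundary points could serve the same purpose once the interior gluing is repaired, and your one-shot cover of $\overline\Omega$ is a reasonable simplification of that second stage --- but only the penalty trick (or an equivalent device) makes the first stage work.
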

\begin{proof}
Let $\rho\in C^2(\bar{\Omega})\cap PSH(\Omega)$ such that $\rho|_{\partial\Omega}=0$,
$\nabla\rho|_{\partial\Omega}\neq 0$
and $dd^c\rho\geq dd^c |z|^2$. For every $r>0$, we define
\begin{center}
	$\Omega_{r}=\{z\in\Omega| \rho (z)<-r\}$.
\end{center}
We will show that for all $r>0$, 
if $\Omega_{2r}\neq\emptyset$ then $(\phi, \nu)$ is  admissible in $\Omega_{2r}$.

By the assumption and by the compactness of $\overline{\Omega_{2r}}$, there exist
balls $B(p_1, r_1),...,B(p_m, r_m)$ such that
\begin{itemize}
	\item $B(p_k, 2r_k)\subset\Omega_{r}$ for all $k=1, ..., m$;
	\item  $(\phi, \nu)$ is  admissible in $B(p_k, 2r_k)$ for all $k=1, ..., m$;
	\item $\overline{\Omega_{2r}}\subset\bigcup\limits_{k=1}^mB(p_k, r_k)$.
\end{itemize}
Let $\epsilon>0$. For every $k=1,...,m$, there exists $C_{\epsilon, k}>0$
and $\phi_{\epsilon, k}\in C(B(p_k, 2r_k))$ such that $\phi\leq \phi_{\epsilon, k}\leq
 \phi+\epsilon$
and $(dd^c\phi_{\epsilon, k})^n\leq e^{C_{\epsilon, k}}\nu$ in the viscosity sense in
$B(p_k, 2r_k)$.

We define
\begin{center}
	$\phi_{\epsilon}(z)=
	\min\{\phi_{\epsilon, k}(z)+\dfrac{\epsilon|z-p_k|^2}{r_k^2}: k\in\{1,...,n\}$ 
	such that $|p_k-z|<2r_k\}
	-\dfrac{\epsilon|z|^2}{\min\limits_{0\leq k\leq m}r_k^2}.$
\end{center}
Then $\phi_{\epsilon}$ is a continuous function in $\Omega_{2r}$ satisfying
 $\phi-\dfrac{\epsilon\max_{\overline{\Omega}}|z|^2}{\min_{0\leq k\leq m}r_k}
 \leq \phi_{\epsilon}
 \leq \phi+\epsilon$
 and $(dd^c\phi_{\epsilon})^n\leq e^{C_{\epsilon}}\nu$ in the viscosity sense in 
 $\Omega_{2r}$, where $C_{\epsilon}=\max_{1\leq k\leq m}C_{\epsilon, k}$.  Hence, $(\phi, \nu)$ is  admissible in $\Omega_{2r}$.
 
 \medskip
 Now, let $\tilde{\phi}\in C(\bar{\Omega})\cap PSH(\Omega)$ such that $\phi=\tilde{\phi}$ in
 $\partial\Omega$ and $(dd^c\tilde{\phi})^n=0$ in $\Omega$.   Since $\phi, \tilde{\phi}$ are continuous, for every $\epsilon>0$,
 there exists $r_1>0$ such that $\phi\leq\tilde{\phi}\leq \phi+\dfrac{\epsilon}{5}$ in
 $\Omega\setminus\Omega_{r_1}$.
 
 Let $0<r_2<\dfrac{r_1}{5}$. Since $(\phi, \nu)$ is  admissible in $\Omega_{r_2}$,
  there exist $\phi_{\epsilon}\in C(\Omega_{r_2})$ and $C_{\epsilon}>0$ such that
 $\phi+\dfrac{3\epsilon}{5}\leq \phi_{\epsilon}\leq \phi+\dfrac{4\epsilon}{5}$
 and $(dd^c u_{\epsilon})^n\leq e^{C_{\epsilon}}\mu$ in the viscosity sense 
 in $\Omega_{r_2}$.
 We define
 \begin{center}
 	$\phi_{0, \epsilon}=
 	\begin{cases}
 	\tilde{\phi}-\dfrac{\epsilon\rho}{r_1}\qquad\mbox{in}\quad\Omega\setminus\Omega_{r_2},\\
 	\min\{\tilde{\phi}-\dfrac{\epsilon\rho}{r_1}, \phi_{\epsilon}\}
 	\qquad\mbox{in}\quad \Omega_{r_2}.
 	\end{cases}$
 \end{center}
Then $\phi_{0, \epsilon}$ is a continuous function in $\Omega$ satisfying
$\phi\leq \phi_{0, \epsilon}\leq \phi+\epsilon$ and
 $(dd^c\phi_{0, \epsilon})^n\leq e^{C_{\epsilon}}\nu$ in the viscosity sense in $\Omega$.
 Hence,  $(\phi, \nu)$ is  admissible in $\Omega$.
 \end{proof}
\begin{Prop}
	Let $g\geq 0$ be a bounded continuous function in $\Omega$ and $\nu=gdV$.
	Let $\phi\in PSH(\Omega)\cap C(\bar{\Omega})$.
	 If $\int\limits_{\{g=0\}}(dd^c\phi)_P^n=0$ then $(\phi, \nu)$ is admissible.
\end{Prop}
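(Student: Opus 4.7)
The plan is to use Proposition \ref{admissible2} to reduce admissibility to a local statement on a fixed small ball $B \Subset \Omega$ centered at an arbitrary point of $\Omega$, and then to build $u_\epsilon$ in two stages: first replace $\phi$ by a continuous plurisubharmonic $\psi$ which is maximal on a thin open neighborhood of $\{g=0\}$ and yet close to $\phi$ in $L^\infty$, then regularize $\psi$ on the complement to obtain an absolutely continuous Monge--Amp\`ere density dominated by a multiple of $g\,dV$. Set $\mu := (dd^c\phi)_P^n$, a finite Borel measure on $\bar B$. By hypothesis $\mu(\{g=0\} \cap \bar B) = 0$, so outer regularity gives, for every $\eta > 0$, a $\delta = \delta(\eta) > 0$ with $V_\delta := \{g<\delta\} \cap B$ satisfying $\mu(V_\delta) < \eta$.

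Fix $\epsilon > 0$. In the first stage, solve the homogeneous Dirichlet problem on $V_\delta$: let $\tilde\phi_\delta \in C(\bar V_\delta) \cap PSH(V_\delta)$ satisfy $(dd^c\tilde\phi_\delta)^n = 0$ in $V_\delta$ with $\tilde\phi_\delta = \phi$ on $\partial V_\delta$; the Bedford--Taylor comparison principle gives $\tilde\phi_\delta \geq \phi$. Gluing yields $\psi_\delta \in C(\bar B) \cap PSH(B)$ equal to $\tilde\phi_\delta$ on $V_\delta$ and to $\phi$ on $B\setminus V_\delta$, with $\psi_\delta \geq \phi$ and $(dd^c\psi_\delta)_P^n$ supported in $\{g \geq \delta\} \cap \bar B$. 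A Kolodziej-type $L^\infty$-stability estimate in the spirit of Theorem \ref{the GKZ08}, applied to the measures $(dd^c\phi)^n$ and $(dd^c\psi_\delta)^n$ whose difference has total mass bounded by $\mu(V_\delta)$, yields $\|\psi_\delta - \phi\|_\infty \to 0$ as $\mu(V_\delta) \to 0$; choose $\eta$ so that this quantity is at most $\epsilon/2$.

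In the second stage, regularize $\psi_\delta$ to bounded Monge--Amp\`ere density while preserving maximality near $\{g=0\}$. Take a Demailly smoothing $\psi_{\delta,\tau} \searrow \psi_\delta$ (smooth plurisubharmonic with bounded MA density), and combine it with $\psi_\delta$ via a $\max$ against a negative plurisubharmonic cutoff supported in $\{g < \delta/2\}$. The resulting $u_\epsilon$ is continuous and plurisubharmonic, equals $\psi_\delta$ on $\{g<\delta/4\}$ (inheriting $(dd^c u_\epsilon)^n = 0$ there), and equals $\psi_{\delta,\tau}$ up to a harmless psh translate on $\{g>\delta/2\}$. For $\tau$ small this gives $\phi \leq u_\epsilon \leq \phi + \epsilon$, and the support of $(dd^c u_\epsilon)_P^n$ lies in $\{g \geq \delta/4\}$ with bounded density, so $(dd^c u_\epsilon)^n \leq e^{C_\epsilon} g\,dV$ in the viscosity sense for a suitable $C_\epsilon$.

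The main obstacle is the stability estimate of Step~1: the standard Kolodziej statement (Theorem \ref{the GKZ08}) requires $L^p$-density control of the source measure, whereas here only the total mass $\mu(V_\delta)$ of the difference is controlled. One may circumvent this by working inside the Cegrell class and invoking the weak stability associated to Theorem \ref{the MA Cegrell} for source measures not charging pluripolar sets, or by exploiting the fact that both $\phi$ and $\psi_\delta$ are continuous potentials to estimate $\sup(\psi_\delta - \phi)$ directly via a Bedford--Taylor integration-by-parts argument: one bounds $\int_{V_\delta}(\psi_\delta - \phi)^n\,dV$ by a constant times $\mu(V_\delta)$ and then appeals to a Chern--Levine--Nirenberg-type $L^n$--$L^\infty$ estimate. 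A secondary, more routine issue is the preservation of plurisubharmonicity in the gluing of Step~2, which follows from the standard fact that the maximum of continuous plurisubharmonic functions is plurisubharmonic, together with the maximality of $\psi_\delta$ on $V_{\delta/4}$.
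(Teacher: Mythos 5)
Your overall architecture (localize via Proposition \ref{admissible2}, then replace $\phi$ near $\{g=0\}$ by a maximal plurisubharmonic function and regularize the remaining Monge--Amp\`ere mass) runs parallel to the paper's proof, but the step you yourself flag as ``the main obstacle'' is a genuine gap, and the escape routes you offer do not close it. The claim that $\|\psi_\delta-\phi\|_{\infty}\to 0$ because the measures $(dd^c\psi_\delta)^n$ and $(dd^c\phi)^n$ differ by a measure of small total mass is false in general: small total mass of a Monge--Amp\`ere measure gives no control on the supremum of the corresponding potential. For instance, $\phi=\epsilon\max(\log|z|,-1/\epsilon)$ on a small ball has $(dd^c\phi)^n$ of total mass $O(\epsilon^n)$, yet the maximal (homogeneous) extension of its boundary values exceeds $\phi$ at the origin by an amount of order $1$; and $\phi$ here is continuous. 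This is exactly why Kolodziej-type stability requires an $L^p$ ($p>1$) density hypothesis on the dominating measure, which $(dd^c\phi)_P^n$ need not satisfy. Your two proposed fixes inherit the problem: stability in the Cegrell classes for measures not charging pluripolar sets yields convergence \emph{in capacity}, not uniform convergence, and there is no ``Chern--Levine--Nirenberg-type $L^n$--$L^\infty$'' estimate converting an $L^n$ bound on $\psi_\delta-\phi$ into a sup bound without precisely the density hypothesis you lack. Since the definition of admissibility demands $\phi\le u_\epsilon\le\phi+\epsilon$ \emph{uniformly}, this is not a technicality.

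The paper resolves exactly this difficulty differently: it first replaces $\phi$ by smooth psh approximants $\phi_j$ squeezed into the band $[\phi+2^{-j-1},\phi+2^{-j}]$ with $\int_{U_j}(dd^c\phi_j)^n<2^{-j-1}$, solves $(dd^c\psi_j)_P^n=\chi_{B\setminus U_j}(dd^c\phi_j)^n$ with boundary data $\phi_j$, invokes Xing's stability lemma to get $Cap(\{\psi_j>\phi_j+\epsilon\},B)\to 0$, and only then upgrades to uniform convergence via Hartogs' lemma, using that $(\limsup_j\psi_j)^*$ is trapped between $\phi$ and $\phi_k+\epsilon$ for every $k$. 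The passage through capacity convergence plus the Hartogs argument (made possible by the strict gap $\phi_j\ge\phi+2^{-j-1}$) is the missing idea in your write-up. A monotonicity argument could in principle rescue your construction --- the functions $\psi_\delta$ decrease as $\delta\searrow 0$, their limit can be identified with $\phi$ by a comparison-principle argument using the hypothesis $\int_{\{g=0\}}(dd^c\phi)_P^n=0$, and Dini's theorem then gives local uniform convergence --- but that is a different proof from the quantitative stability you assert, and it still requires care when $\{g=0\}$ has positive Lebesgue measure and near $\partial B$. As written, Step~1 does not stand.
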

\begin{proof}
	The problem is local by Proposition \ref{admissible2}. 
	
	Let $B\Subset\Omega$ be a ball. Let $\{U_j\}_{j=1}^{\infty}$ 
	be a decreasing sequence of open subsets
	of $B$ such that
	\begin{itemize}
		\item $\{g=0\}\cap B\subset U_j$ for all $j\in\Z^+$.
		\item $\int_{\overline{U_j}}(dd^c\phi)_P^n<\dfrac{1}{2^j}$ for all $j\in\Z^+$.
	\end{itemize}
Let $\phi_j\in C^{\infty}(\bar{B})\cap PSH(B)$ such that 
$\phi+\dfrac{1}{2^{j+1}}\leq \phi_j\leq\phi+\dfrac{1}{2^j}$ in $B$ and
 $\int_{U_j}(dd^c\phi_j)^n<\dfrac{1}{2^{j+1}}$. For any $j$, we define by $\psi_j$ 
 the solution of 
 \begin{equation}
 \begin{cases}
 \psi_j\in C(\bar{B})\cap PSH(B),\\
 (dd^c\psi_j)_P^n=\chi_{B\setminus U_j}(dd^c\phi_j)^n\qquad\mbox{in}\quad B,\\
 \psi_j=\phi_j\qquad\mbox{in}\quad\partial B.
 \end{cases}
 \end{equation}
 Then $\psi_j\geq \phi_j\geq\phi$ and by \cite[Lemma 1]{Xin}, for any $\epsilon>0$,
 \begin{center}
 	$\lim\limits_{j\to\infty}Cap(\{\psi_j>\phi_j+\epsilon\}, B)=0,$
 \end{center}
where
\begin{center}
	$Cap (U, \Omega)=\sup\{\int\limits_U(dd^cw)^n: w\in PSH(\Omega), 0\leq w\leq 1 \},$
\end{center}
for every Borel set $U\subset\Omega$.

Hence, for every $\epsilon>0$ and $k>0$,
\begin{center}
	$\phi\leq(\limsup\limits_{j\to\infty}\psi_j)^*\leq\phi_k+\epsilon\leq\phi+\dfrac{1}{2^k}
	+\epsilon$.
\end{center}
By Hartogs' lemma, $\psi_j$ is uniformly convergent to $\phi$ in $\bar{B}$. Moreover,
$(\psi_j, \nu)$ is admissible in $B$ for all $j$. Hence, $(\phi, \nu)$ is admissible
in $B$. Thus $(\phi, \nu)$ is admissible in $\Omega$.
\end{proof}
\begin{Rem}
	The condition \textit{``$(\phi, gdV)$ is admissible''} does not imply that  $$\int\limits_{\{g=0\}}(dd^c\phi)_P^n=0.$$
	Indeed, if $\Omega$ is the unit ball, $g=\max\{|z|^2-1/2, 0\}$ and $\phi=\log\max\{|z|^2, 1/2\}$ then 
	$(\phi, gdV)$ is admissible since $\phi_m=\log\max\{|z|^2, 1/2+1/m\}$ is uniformly convergent to $\phi$
	as $m\rightarrow\infty$
	and $(\phi_m, gdV)$ is admissible for every $m>0$. But, it is clear that $\int\limits_{\{g=0\}}(dd^c\phi)_P^n>0$.
\end{Rem}
\section{H\"older continuity of solution}
In this section we prove a H\"older regularity for the viscosity solutions to certain degenerate parabolic complex Monge-Amp\`ere equations in smooth bounded strongly pseudoconvex domains.
\begin{Prop} \label{holder_t_1}
		Assume that $u(t, z)$ is a viscosity solution to \eqref{PMA}.
	Suppose that there exist $C>0$ and $0<\alpha<1$  such that 
	\begin{center}
		$|\varphi (t, z)-\varphi (s, z)|\leq C|t-s|^{\alpha}$,
	\end{center}
	for all $z\in\partial\Omega, t,s\in [0, T)$. Then
	there exists $\tilde{C}>0$ depending on $C$, $n$, $\Omega$,
	$\sup\limits_{[0, T)\times\Omega}f$, $\alpha$
	 and $\sup\limits_{[0, T)\times\bar{\Omega}}F(t, z, \sup\varphi)$  such that
	\begin{equation}\label{prop1holder.eq}
		u (t, z)-u (s, z)\geq -\tilde{C}|t-s|^{\alpha},
	\end{equation}
	for all $z\in\Omega, 0\leq s\leq t<T$.
\end{Prop}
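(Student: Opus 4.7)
My plan is to compare $u$ on $[s, T) \times \overline{\Omega}$, viewed as the viscosity solution of the Cauchy-Dirichlet problem starting at time $s$ with initial data $u(s, \cdot)$, against an explicit continuous subsolution whose decrease in $\tau$ from $\tau = s$ encodes the H\"older bound. Let $\rho \in C^2(\overline{\Omega}) \cap PSH(\Omega)$ be a smooth defining function with $\rho|_{\partial\Omega} = 0$ and $dd^c \rho \geq dd^c |z|^2$, and fix $c_n > 0$ with $(dd^c\rho)^n \geq c_n\, dV$. For constants $K, M > 0$ to be determined, I will take
\[
V(\tau, z) := u(s, z) + K(\tau-s)\rho(z) - M(\tau-s)^\alpha, \qquad (\tau, z) \in [s, T) \times \overline{\Omega}.
\]
Then $V(s, z) = u(s, z)$, and on $\partial\Omega$ the H\"older bound on $\varphi$ yields $V(\tau, z) = \varphi(s, z) - M(\tau-s)^\alpha \leq \varphi(\tau, z)$ as soon as $M \geq C$.

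To verify the viscosity subsolution inequality on $(s, T) \times \Omega$, for an upper test function $q$ of $V$ at $(\tau_0, z_0)$ with $\tau_0 > s$ I would consider
\[
\phi(z) := q(\tau_0, z) + M(\tau_0 - s)^\alpha - K(\tau_0 - s)\rho(z),
\]
which is an upper test at $z_0$ for the continuous plurisubharmonic slice $u(s, \cdot)$ (plurisubharmonicity of slices is \cite[Corollary 3.7]{EGZ15b}). Hence $dd^c\phi(z_0) \geq 0$, which rearranges to $dd^c q_{\tau_0}(z_0) \geq K(\tau_0 - s)\, dd^c\rho(z_0)$ and gives $(dd^c q_{\tau_0})^n(z_0) \geq K^n(\tau_0 - s)^n c_n$. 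Combining with $\dt q(\tau_0, z_0) = K\rho(z_0) - M\alpha(\tau_0-s)^{\alpha-1}$ and $K\rho \leq 0$, the required inequality reduces to
\[
K^n (\tau_0 - s)^n c_n \, e^{M\alpha(\tau_0-s)^{\alpha-1}} \geq e^{F_0}\|f\|_\infty,
\]
where $F_0$ bounds $F(\tau_0, z_0, V(\tau_0, z_0))$. Since $e^{M\alpha y^{\alpha - 1}}$ blows up as $y \to 0^+$, the function $y \mapsto y^n e^{M\alpha y^{\alpha - 1}}$ has a strictly positive infimum on $(0, T]$; after fixing $M \geq C$ I may then choose $K$ depending only on $n, \Omega, \sup f, \alpha, \sup F(t, z, \sup\varphi)$ to make the inequality hold uniformly.

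The final step is to deduce $V \leq u$. On each sub-cylinder $[s+\eta, T) \times \overline{\Omega}$ with $\eta > 0$, $V$ is Lipschitz in $\tau$ uniformly in $z$, so case (c) of the comparison principle (Theorem \ref{compa.the}) applies; together with the matching $V(s, z) = u(s, z)$ this should propagate to $V \leq u$ on $[s, T) \times \overline{\Omega}$. Rearranging and using $(\tau - s) \leq T^{1-\alpha}(\tau - s)^\alpha$,
\[
u(\tau, z) - u(s, z) \geq -K\|\rho\|_\infty(\tau - s) - M(\tau - s)^\alpha \geq -\tilde{C}(\tau - s)^\alpha,
\]
with $\tilde{C} := M + KT^{1-\alpha}\|\rho\|_\infty$ having the claimed dependencies.

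The hard part will be the comparison at the initial time $\tau = s$: since $V$ fails to be Lipschitz in $\tau$ at $\tau = s$ and $\mu$ need not satisfy hypothesis (a) or (b) of Theorem \ref{compa.the}, bootstrapping from $[s + \eta, T)$ is not automatic. I anticipate handling this either by the Perron-envelope route---gluing $\tilde V := V$ on $[s, T)$ with $\tilde V := u$ on $[0, s]$ and verifying via Lemma \ref{lem perron EGZ15b} that the resulting function is a continuous viscosity subsolution of the original problem, the subsolution check at $\tau = s$ exploiting plurisubharmonicity of $u(s, \cdot)$ and the fact that one-sided tests from $\tau < s$ are already controlled by $u$ itself---or by an approximation $\mu \rightsquigarrow \mu + \varepsilon\, dV$ for which case (a) of Theorem \ref{compa.the} applies directly, passing to the limit $\varepsilon \to 0^+$ via the stability in Lemma \ref{lem.seq}.
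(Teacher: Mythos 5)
Your strategy is essentially the paper's: the paper compares $u$ on the short cylinder $(s_0,s_0+\delta)\times\Omega$ with the barrier $u_\delta(t,z)=u(s_0,z)+\delta^{\alpha}\rho(z)-\max\{C,C_2\}(t-s_0)^{\alpha}$, while you let the $\rho$-coefficient grow linearly in time and compare on all of $(s,T)$; both verifications rest on the same trade-off between the degenerating spatial Hessian ($\sim(\tau-s)^{n}$ resp. $\delta^{n\alpha}$) and the exploding negative time derivative $-M\alpha(\tau-s)^{\alpha-1}$, and your observation that $y\mapsto y^{n}e^{M\alpha y^{\alpha-1}}$ is bounded below away from $0$ is the correct quantitative heart of the matter. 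Two remarks. First, your closing worry about the comparison at $\tau=s$ is unfounded: case (c) of Theorem \ref{compa.the} only requires \emph{local} Lipschitz continuity in $t$ on the open time interval, which $V$ satisfies (the Lipschitz constant may blow up as $\tau\to s^{+}$); the paper applies the theorem directly to its own barrier, which has exactly the same $(t-s_0)^{\alpha}$ non-Lipschitz behaviour at the left endpoint. Of your two fallbacks, the perturbation $\mu\rightsquigarrow\mu+\varepsilon\,dV$ is sound (via Lemma \ref{lem.uplowesti}, and no limit in $\varepsilon$ is even needed since the conclusion $V\le u$ is $\varepsilon$-free), but the Perron gluing via Lemma \ref{lem perron EGZ15b} does not actually circumvent anything: the glued function equals $u$ for $\tau<s$, which need not be Lipschitz in $t$, so you would face the same comparison hypothesis again.

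Second, and this is the one genuine defect as written: your constant $\tilde C=M+KT^{1-\alpha}\|\rho\|_{\infty}$ depends on $T$ and degenerates as $T\to\infty$, whereas the statement requires $\tilde C$ to depend only on $C$, $n$, $\Omega$, $\sup f$, $\alpha$ and $\sup F(t,z,\sup\varphi)$ --- and Proposition \ref{holder_t_1} is later invoked with $T=\infty$ (in the proof of Theorem \ref{mainconvergence1}), where your bound is vacuous. The fix is the paper's opening reduction: since $u$ is uniformly bounded (with controlled bound), it suffices to treat $t-s<1$; run your comparison only on $(s,\min(s+1,T))\times\Omega$, where $(\tau-s)\le(\tau-s)^{\alpha}$ and hence $\tilde C=M+K\|\rho\|_{\infty}$ is admissible. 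With that adjustment the argument is complete.
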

\begin{proof}
Since $u$ is bounded, we only need to show 	\eqref{prop1holder.eq} in the case $|t-s|<1$.

Let $0\leq s_0<t_0<T$ such that $t_0-s_0=\delta<1$. 
Let $\rho\in PSH(\Omega)\cap C^2(\bar{\Omega})$ such that 
$\rho|_{\partial\Omega}=0$ and $(dd^c\rho)^n\geq\mu$. Denote 
\begin{center}
	$C_1=\max\limits_{\bar{\Omega}}(-\rho),\qquad
	 C_2=\alpha^{-1}|\sup\limits_{[0, T)\times\bar{\Omega}}F(t, z, \sup\varphi)|
	 +n\sup\limits_{(0, 1)}(-r^{1-\alpha}\log r),$
\end{center}
and
\begin{center}
	$u_{\delta}(t, z)=u(s_0, z)+\delta^{\alpha}\rho-\max\{C, C_2\}(t-s_0)^{\alpha}.$
\end{center}
We have, for every $(t, z)\in (s_0, t_0)\times\Omega$,
\begin{flushleft}
	$\begin{array}{ll}
	\dt u_{\delta}(t, z)=-\dfrac{\alpha\max\{C, C_2\}}{(t-s_0)^{1-\alpha}}
	&\leq -\dfrac{C_2{\alpha}}{(t_0-s_0)^{1-\alpha}}\\
	&\leq -\dfrac{|\sup\limits_{[0, T)\times\bar{\Omega}}F(t, z, \sup\varphi)|
		+n\alpha\sup\limits_{(0, 1)}(-r^{1-\alpha}\log r)}{\delta^{1-\alpha}}\\
		&\leq -\dfrac{|\sup\limits_{[0, T)\times\bar{\Omega}}F(t, z, \sup\varphi)|
		+n\alpha(-\delta^{1-\alpha}\log \delta)}{\delta^{1-\alpha}}\\
	&=-\dfrac{|\sup\limits_{[0, T)\times\bar{\Omega}}F(t, z, \sup\varphi)|}{\delta^{1-\alpha}}
	+n\alpha\log \delta.
	\end{array}$
\end{flushleft}
Therefore, for each $(t_1, z_1)\in (s_0, t_0)\times\Omega$, for every upper test function
$q$ for $u_{\delta}$ at $(t_1, z_1)$, we have
\begin{equation}\label{eq dtq proof holder_1}
\dt q(t_1, z_1)= \dt u_{\delta}(t_1, z_1)\leq 
-\dfrac{|\sup\limits_{[0, T)\times\bar{\Omega}}F(t, z, \sup\varphi)|}{\delta^{1-\alpha}}
+n\alpha\log \delta.
\end{equation}
Note that $q(t_1, z)-\delta^{\alpha}\rho(z)+\max\{C, C_2\}(t_1-s_0)^{\alpha}$ is a 
upper test function for $u_{\delta}(t_1, z)$ at $z_1$. Since $u_{\delta}(t_1, z)$ is plurisubharmonic
\cite[Corollary 3.7]{EGZ15b}, we have, by \cite[Proposition 1.3]{EGZ},
\begin{equation}
dd^c(q(t_1, z)-\delta^{\alpha}\rho(z))|_{z=z_1}\geq 0.
\end{equation}
Then
\begin{equation}\label{eq MAq1 proof holder_1}
dd^cq(t_1, z)|_{z=z_1}\geq \delta^{\alpha}dd^c\rho(z)|_{z=z_1}\geq 0,
\end{equation}
and
\begin{equation}\label{eq MAq2 proof holder_1}
(dd^cq(t_1, z))^n|_{z=z_1}\geq \delta^{n\alpha}(dd^c\rho)^n|_{z=z_1}\geq \delta^{n\alpha}\mu(t_1, z_1).
\end{equation}
Combining \eqref{eq dtq proof holder_1} and \eqref{eq MAq2 proof holder_1}, we get
\begin{equation}\label{eq q proof holder_1}
(dd^cq)^n\geq e^{\dt q+F(t, z, u_{\delta}(t, z))}\mu(t, z),
\end{equation}
at every $(t_1, z_1)\in (s_0, t_0)\times\Omega$.

By \eqref{eq MAq1 proof holder_1} and \eqref{eq q proof holder_1}, we have
\begin{center}
	$(dd^c u_{\delta})^n\geq e^{\dt u_{\delta}+F(t,z,u_{\delta})}\mu,$
\end{center}
in viscosity sense in $(s_0, t_0)\times\Omega$.

 It is straightforward that $u_{\delta}\leq u$
in $\partial_P((s_0, t_0)\times\Omega) $. By Theorem \ref{compa.the}, we have
$u_{\delta}\leq u$ in $(s_0, t_0)\times\Omega$. Hence
\begin{center}
	$u(t_0, z)\geq u_{\delta}(t_0, z)\geq u(s_0, z)-(C_1+\max (C, C_2))\delta^{\alpha},$
\end{center}
for all $z\in\Omega$.

 Thus,
\begin{center}
	$u(t, z)\geq u(s, z)-(C_1+\max (C, C_2))|t-s|^{\alpha},$
\end{center}
for every $z\in\Omega$ and $0\leq s<t<T$ with $t-s<1$. The proof is completed.
\end{proof}

\begin{Prop}\label{holder_t_2}
		Assume that $\mu>0$ and $u(t, z)$ is a viscosity solution to \eqref{PMA}.
	Suppose that there exist $C>0$, $0<\alpha<1$ and $0< \beta< 1/2$ such that 
	\begin{center}
		$|\varphi (t, z)-\varphi (s, z)|\leq C|t-s|^{\alpha}$,
	\end{center}
	for all $z\in\partial\Omega, t,s\in [0, T)$, and 
	\begin{center}
		$|u_0(z)-u_0(w)|\leq C|z-w|^{\beta}, $
	\end{center}
	for all $z, w\in\bar{\Omega}$. Then, there exists $\tilde{C}>0$ such that
	\begin{center}
		$u_0(z)-u (t, z)\geq -\tilde{C}t^{\alpha}$,
	\end{center}
	for all $z\in\Omega, 0\leq t<T$.
\end{Prop}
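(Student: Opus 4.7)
The plan is to dominate $u$ from above by a supersolution $\overline u(t,z)$ built by mollifying $u_0$ and adding a correction of the form $K t^\alpha$; Theorem~\ref{compa.the}(a), applicable because $\mu>0$, then gives $u\leq \overline u$ and the claim follows. By Proposition~\ref{l^infty:apriori} $u$ is bounded, so it is enough to prove the inequality for $t_0\in(0,t_*]$ with some small $t_*>0$; the range $t_0>t_*$ is handled by enlarging $\tilde C$.

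Fix such a small $t_0$. Let $\tilde u_0$ be a $\beta$-H\"older extension of $u_0$ to $\C^n$ (McShane--Whitney), let $\chi_\epsilon$ be a standard smooth radial mollifier supported in $B(0,\epsilon)$, and set $u_0^\epsilon := \tilde u_0 * \chi_\epsilon$ with $\epsilon := t_0^{\alpha/\beta}$, so that $\epsilon^\beta = t_0^\alpha$. Then $u_0^\epsilon\in C^\infty(\C^n)$, $|u_0^\epsilon - u_0|\leq C_0 \epsilon^\beta$ on $\bar\Omega$, and each complex second derivative satisfies $|\partial_{j\bar k} u_0^\epsilon|\leq C_0 \epsilon^{\beta-2}$. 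In particular, at any point $z$ where $dd^c u_0^\epsilon(z)\geq 0$ one has $(dd^c u_0^\epsilon)^n(z)\leq C_0' \epsilon^{n(\beta-2)}$. Define
\[
\overline u(t,z) := u_0^\epsilon(z) + C_0 \epsilon^\beta + K t^\alpha,\qquad (t,z)\in[0,t_0]\times\bar\Omega,
\]
with $K$ to be chosen. Then $\overline u(0,z)\geq u_0(z)$ trivially, and on $[0,t_0]\times\partial\Omega$ the $\alpha$-H\"older assumption on $\varphi(\cdot,z)$ yields $\overline u(t,z)\geq u_0(z)+K t^\alpha\geq\varphi(t,z)$ once $K\geq C$.

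To check that $\overline u$ is a viscosity supersolution of \eqref{PMAfree} on $(0,t_0)\times\Omega$, note that $\overline u$ is $C^\infty$ in $z$ and $C^1$ in $t$ for $t>0$, so any $C^{(1,2)}$ lower test $q$ at $(t_1,z_1)$ obeys $\partial_t q(t_1,z_1)=\alpha K t_1^{\alpha-1}$ and $dd^c q(t_1,z_1)\leq dd^c u_0^\epsilon(z_1)$ as Hermitian forms. The admissibility constraint $dd^c q(t_1,z_1)\geq 0$ in the supersolution definition forces $dd^c u_0^\epsilon(z_1)\geq 0$, and the required inequality becomes
\[
(dd^c u_0^\epsilon)^n(z_1)\leq e^{\alpha K t_1^{\alpha-1}+F(t_1,z_1,\overline u(t_1,z_1))}\mu(t_1,z_1).
\]
Using $\mu\geq\mu_*>0$ on the closed parabolic cylinder, this reduces in the worst case $t_1=t_0$ to $\alpha K t_0^{\alpha-1}\geq n(2-\beta)\log(1/\epsilon)+C_1$, i.e.\ to $K\gtrsim t_0^{1-\alpha}\log(1/t_0)+C_2 t_0^{1-\alpha}$. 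Since $t_0^{1-\alpha}\log(1/t_0)\to 0$ as $t_0\to 0^+$, this is achievable with $K$ equal to a fixed constant independent of $t_0$ once $t_0\leq t_*$. Theorem~\ref{compa.the}(a) then gives $u\leq\overline u$ on $[0,t_0]\times\bar\Omega$, so
\[
u(t_0,z)\leq u_0^\epsilon(z)+C_0 \epsilon^\beta+K t_0^\alpha \leq u_0(z)+2 C_0 t_0^\alpha+K t_0^\alpha = u_0(z)+\tilde C t_0^\alpha.
\]

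The main obstacle is that $u_0^\epsilon$ fails to be plurisubharmonic on the full $\bar\Omega$: convolution with $\chi_\epsilon$ preserves plurisubharmonicity only on the interior subdomain $\Omega_\epsilon$, and the McShane extension destroys it near $\partial\Omega$. The argument sidesteps this obstacle by the observation that at any point where $dd^c u_0^\epsilon$ is not positive semidefinite, no admissible viscosity test can touch $\overline u$ from below while also satisfying $dd^c q\geq 0$, so the supersolution inequality is vacuous there. The secondary technical balance between $\epsilon$ and $K$ is resolved by the choice $\epsilon=t_0^{\alpha/\beta}$ together with the decay $t_0^{1-\alpha}\log(1/t_0)\to 0$, which crucially exploits $\alpha<1$.
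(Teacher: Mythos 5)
Your proof is correct and follows essentially the same route as the paper: both extend $u_0$ to a $\beta$-H\"older function on $\C^n$ (your McShane--Whitney extension is exactly the paper's $\max_{\xi}(u_0(\xi)-C|z-\xi|^{\beta})$), mollify at scale $\epsilon=t^{\alpha/\beta}$, add a barrier term $K t^{\alpha}$ whose time derivative $\alpha K t^{\alpha-1}$ absorbs the $\epsilon^{n(\beta-2)}$ blow-up of the Monge--Amp\`ere mass thanks to $\mu\geq\mu_*>0$ and the boundedness of $r^{1-\alpha}\log(1/r)$, and conclude by the parabolic comparison principle on the short cylinder. Your observation that non-plurisubharmonicity of the mollified extension is harmless because the supersolution test is vacuous where $dd^c u_0^{\epsilon}$ is not positive is precisely the role of the positive part $(dd^c u_{\delta,0})_{+}^{n}$ in the paper's argument.
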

\begin{proof}
	We define on $\C^n$
	\begin{center}
		$\tilde{u}_0(z)=\max\limits_{\xi\in\bar{\Omega}}(u_0(\xi)-C|z-\xi|^{\beta}), z\in \C^n.$
	\end{center}
Then $\tilde{u}_0=u_0$ in $\bar{\Omega}$ and
\begin{center}
	$|\tilde{u}_0(z)-\tilde{u}_0(w)|\leq C|z-w|^{\beta}, $
\end{center}
for every $z, w\in\C^n$.

 Let $\chi\in C^{\infty}(\C^n, [0, 1])$ such that
$\chi (z)=0$ for every $|z|>2$ and $\int\limits_{\C^n}\chi=1$. For every $\delta>0$, we
denote
\begin{center}
	$u_{\delta, 0}(z)=\chi_{\delta}*\tilde{u}_0(z),$
\end{center}
where $\chi_{\delta}(z)=\dfrac{1}{\delta^{2n}}\chi (\dfrac{z}{\delta})$.

Then, there exists $C_1>0$ depending only on $\chi$ and $C$ such that, for every $\delta>0$
and $z\in\C^n$,
\begin{center}
	$|u_{\delta,0}(z)-\tilde{u}_0|\leq C_1\delta^{\beta}, 
	|Du_{\delta,0}|\leq C_1\delta^{\beta-1}, |D^2u_{\delta,0}|\leq C_1\delta^{\beta-2}.$
\end{center}
Since $\mu>0$, there exists $C_2>0$ depending only on $C_1$ and $\mu$ such that
\begin{center}
	$(dd^cu_{\delta,0})_{+}^n\leq C_2\delta^{-2n+n\beta}\mu$.
\end{center}
 For every $0<\delta<\min\{1, T\}$, we define
\begin{center}
	$u_{\delta}(t, z)=u_{\delta^{\alpha/\beta}, 0}(z)+C_1\delta^{\alpha}
	+\max\{C, C_3\}t^{\alpha}$,
\end{center}
where
\begin{center}
	$C_3=\dfrac{1}{\alpha}(\log C_2+
	|\inf\limits_{[0, T)\times\bar{\Omega}}F(t, z, \inf\varphi)|
	+\dfrac{n\alpha(2-\beta)}{\beta}\sup\limits_{(0,1)}
	(r^{1-\alpha}\log\dfrac{1}{r})).$
\end{center}
It is direct to check that 
\begin{center}
	$(dd^c u_{\delta})^n\leq e^{\dt u_{\delta}+F(t,z,u_{\delta})}\mu,$
\end{center}
in viscosity sense in $(0, \delta)\times\Omega$. Moreover, 
$u_{\delta}\geq u$
in $\partial_P(0, \delta)\times\Omega) $. Then, by the comparison principle,
$u_{\delta}\geq u$ in $(0, \delta)\times\Omega$. In particular, for every $z\in\Omega$,
\begin{center}
$u(\delta, z)\leq u_{\delta}(\delta, z)=u_{\delta^{\alpha/\beta}, 0}(z)+C_1\delta^{\alpha}
+\max\{C, C_3\}\delta^{\alpha}\leq u_0(z)+(2C_1+\max\{C, C_3\})\delta^{\alpha}$.
\end{center}
Since $0<\delta<\min\{1, T\}$ is arbitrary, we get
\begin{center}
	$u(t, z)\leq u_0(z)+(2C_1+\max\{C, C_3\})t^{\alpha}$,
\end{center}
for every $(t, z)\in (0, \min\{1, T\})\times\Omega$. Since $u$ is bounded, there exists $C_4>0$ depending only on
$C_1, C_3, C, \sup |u|$ such that
\begin{center}
	$u(t, z)\leq u_0(z)+C_4t^{\alpha}$,
\end{center}
for every $(t, z)\in (0, T)\times\Omega$.
The proof is completed.
\end{proof}
\begin{Prop} \label{holder_z} Assume that $u(t, z)$ is a viscosity solution to \eqref{PMA} with $\mu =dV=(dd^c|z|^2)^n $.
	Suppose that there exist $C_1,C_2>0$,  and $0< \beta< 1/2$ such that 
	\begin{eqnarray*}
		|\varphi (t, z)-\varphi (t, w)|&\leq& C_1|z-w|^{2\beta}, \, \forall z, w\in\partial\Omega, t\in [0, T)\\
		|u_0(z)-u_0(w)|&\leq& C_1|z-w|^{\beta},\, \forall z, w\in\bar{\Omega},
	\end{eqnarray*}
	and  $\forall z\in \partial \Omega, t\mapsto   \varphi (t,z) -C_2t$ is decreasing.  Then there exists $\tilde C>0$ such that 
	$$|u(t,z)-u(t,w)|\leq \tilde C|z-w|^\beta.$$
\end{Prop}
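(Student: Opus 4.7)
The plan is a translation-comparison argument exploiting the translation invariance of $\mu = dV$. For a small vector $h \in \C^n$, set $\Omega_h = \{z \in \Omega : z+h \in \Omega\}$. Because $dV$ is translation invariant, $(t,z) \mapsto u(t,z+h)$ is a viscosity solution on $(0,T) \times \Omega_h$ to the same Monge-Amp\`ere flow but with $F(t,z+h,\cdot)$ in place of $F(t,z,\cdot)$. The strategy is to produce a perturbation $v_h$ of this translated solution that is (i) a viscosity subsolution of the original flow on $\Omega_h$, and (ii) bounded above by $u$ on the parabolic boundary of $\Omega_h$. Then Theorem~\ref{compa.the} (applicable because $\mu$ is independent of $t$) yields $v_h \le u$ everywhere on $[0,T) \times \overline{\Omega_h}$, and rearranging gives $u(t,z+h)-u(t,z) \le \tilde C |h|^{\beta}$; swapping $h \leftrightarrow -h$ produces the two-sided bound.

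The candidate is, as in Proposition~\ref{holder_t_1}, built from a defining function $\rho \in PSH(\Omega) \cap C^2(\overline{\Omega})$ with $\rho|_{\partial \Omega}=0$ and $(dd^c \rho)^n \ge dV$:
\begin{equation*}
v_h(t,z) := u(t,z+h) - A|h|^{\beta} + \lambda |h|^{\beta} \rho(z) - B t |h|^{\beta},
\end{equation*}
with $A,\lambda,B>0$ tuned to the data. The subsolution check uses the translation invariance of $dV$, the monotonicity of $F$ in $r$, and the inequality $(dd^c(\lambda|h|^{\beta} \rho))^n \ge \lambda^n |h|^{n \beta} dV$; one first chooses $\lambda$ so that the psh perturbation absorbs the $z$-discrepancy $F(t,z+h,\cdot)-F(t,z,\cdot)$ (entering through a Hölder-in-$z$ bound on $F$ inherited from the ambient hypothesis of Theorem~\ref{mainLip}), and then $B$ large enough that $-Bt|h|^{\beta}$ compensates the residual additive error in the exponent.

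The comparison on the parabolic boundary splits into two cases. At $t=0$, the inequality $u_0(z+h)-u_0(z)\le C_1|h|^{\beta}$ combined with $\rho \le 0$ suffices provided $A \ge C_1$. On the lateral piece $[0,T) \times \partial \Omega_h$ one of the points $z$, $z+h$ lies on $\partial \Omega$, so we are forced to compare an interior value of $u$ with a boundary value of $\varphi$. This requires a preliminary \emph{boundary behavior estimate}: there exists $C^{*}>0$ with $|u(t,z)-\varphi(t,z_0)| \le C^{*} |z-z_0|^{\beta}$ for every $z_0 \in \partial \Omega$ and $z \in \overline{\Omega}$. This is proved by building upper and lower barriers at each $z_0 \in \partial \Omega$, extending $\varphi$ using its $2\beta$-Hölder regularity and adding suitable multiples of $\rho$, then applying Theorem~\ref{compa.the}; the hypothesis that $t \mapsto \varphi(t,z)-C_2 t$ is decreasing enters here, ensuring the upper barrier remains a supersolution in the presence of time-variation of $\varphi$. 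Combining this boundary estimate with the $2\beta$-Hölder control on $\varphi(t,\cdot)$ (noting that $|h|^{2\beta} \le |h|^{\beta}$ for $|h|$ bounded) yields $v_h \le u$ on the lateral boundary.

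The main obstacle is the boundary behavior estimate. It reflects the well-known half-loss phenomenon for the complex Monge-Amp\`ere operator: squaring a Hölder-$\beta$ control in the transverse direction (governed by $\rho$) is what matches the Hölder-$2\beta$ regularity of $\varphi$ on $\partial \Omega$. Once the interior inequality $u(t,z+h)-u(t,z) \le \tilde C |h|^{\beta}$ is established for $z, z+h \in \Omega$, the remaining configurations --- where the segment from $z$ to $z+h$ exits $\Omega$ --- are handled directly: both points lie within $O(|h|)$ of $\partial \Omega$, and the boundary behavior estimate combined with the $2\beta$-Hölder regularity of $\varphi$ again delivers the $|h|^{\beta}$ bound.
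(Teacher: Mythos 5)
Your architecture coincides with the paper's: first a boundary behavior estimate $|u(t,z)-u(t,\xi)|\le B|z-\xi|^{\beta}$ for $\xi\in\partial\Omega$ via sub/super barriers (the paper gets the lower barrier by solving an auxiliary elliptic equation $(dd^cv)^n=e^{M+C_2}\mu$ with boundary data $\varphi(t,\cdot)$ and quoting Bedford--Taylor/Charabati H\"older regularity, and takes the harmonic extension of $\varphi$ as upper barrier), then a translation--comparison on $\Omega_h$. Two points deserve correction. First, in the barrier step the monotonicity hypothesis ``$\varphi(t,z)-C_2t$ decreasing'' is needed for the \emph{lower} barrier, not the upper one as you claim: it gives $\partial_t(\text{lower barrier})\le C_2$ in the viscosity sense, which is what keeps the right-hand side $e^{\partial_t w+F}\mu$ under control for a \emph{sub}solution; the upper barrier (harmonic or maximal extension of $\varphi(t,\cdot)$) majorizes the psh function $u(t,\cdot)$ with no time-derivative input at all. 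Second, in the translation step your claim that $\lambda$ can be ``chosen so that the psh perturbation absorbs the $z$-discrepancy of $F$'' is false: that discrepancy enters as a multiplicative factor $e^{C_F|h|^{\beta}}$ on $(dd^c\tilde q)^n$, and the determinant gain $\lambda^n|h|^{n\beta}dV$ from $\lambda|h|^{\beta}\rho$ cannot dominate $(e^{C_F|h|^{\beta}}-1)(dd^c\tilde q)^n$ because the Hessian of an upper test function for a subsolution has no upper bound. Your construction survives nonetheless, because the term $-Bt|h|^{\beta}$ contributes $-B|h|^{\beta}$ to $\partial_t q$ and hence a factor $e^{-B|h|^{\beta}}$ on the right-hand side, which absorbs the full $F$-discrepancy once $B\ge C_F$; so the additive candidate $u(t,z+h)-A|h|^{\beta}+\lambda|h|^{\beta}\rho-Bt|h|^{\beta}$ is indeed a subsolution and the comparison goes through. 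The paper instead uses the multiplicative candidate $(1-|\tau|^{\beta})u(t,z+\tau)+A_2|\tau|^{\beta}|z|^2-A_1|\tau|^{\beta}$ and the concavity of $\log\det$, which produces the compensating term $|\tau|^{\beta}\log A_2^n$ in the exponent without any time term; the practical difference is that your constant grows like $BT$ while the paper's is independent of $T$ --- acceptable for the statement as written (where $T<\infty$), but worth noting. Your final remark about segments exiting $\Omega$ is unnecessary: the translation argument only requires both endpoints $z$ and $z+h$ to lie in $\Omega$, which already covers all pairs of interior points.
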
	

\begin{proof}
	Let $M=\sup_{[0,T)\times \Omega} F(t,z,u(t,z))$, then $u$ satisfies
	\begin{equation}\label{super}
	(dd^c u)^n \leq e^{\dt u +M}\mu,
	\end{equation}
	in the viscosity sense. 
	Let $v(t,z)$ be  the solution of  the  complex Monge-Amp\`ere  equations
	\begin{equation} \label{eq:subsol}
	\begin{cases}
	(dd^c v)^n= e^{M+C_2}\mu, \\
	v(t,z)=\varphi(t,z)\text{ \, on} \, \partial \Omega ,
	\end{cases}
	\end{equation} 
	where $C_2$ satisfies that $\varphi(t,z)- C_2t$ is decreasing. 
	Then $v(t,z)-C_2 t$ is also the solution of 
	
	\begin{equation} \label{eq:subsol'}
	\begin{cases}
	(dd^c v)^n= e^{M+C_2} \mu, \\
	v(t,z)=\varphi(t,z) -C_2 t \text{ \, on} \, \partial \Omega.
	\end{cases}
	\end{equation}
	Applying  the  global maximum principle  of complex Monge-Amp\`ere operator (see for example \cite[Corollary 3.30]{GZ17})  for $v(t,x)-C_2t $ and   the fact that $\varphi(t,z)- C_2t$ is decreasing, we have
	\begin{equation} \label{bound:phi_t}
	v(t,z)-v(s,z)\leq C_2(t-s), \forall t\geq s.
	\end{equation}
	We now have $v(t,s)-C_2 t$ is decreasing in $t$, so   $v(t,z)$ converges,  as $t\rightarrow 0$, to  a psh function $v_0$ satisfying the equation 
	\begin{equation} \label{eq:subsol2}
	\begin{cases}
	(dd^c v_0)^n=  e^{M+C_2} \mu, \\
	v_0=\varphi(0,z)  \text{ \, on} \, \partial \Omega. 
	\end{cases}
	\end{equation}
	Let $\rho\in C^2({\bar{\Omega}})\cap PSH(\Omega)$ such that $\rho<0 $ on $\Omega$, 
	$\rho|_{\partial\Omega}=0$ and $(dd^c\rho)^n\geq\mu$. We choose $K>0$ such that $v-K(-\rho )^\beta \leq u_0 $ on $\bar \Omega$.  
	It follows from \eqref{bound:phi_t}  and \eqref{eq:subsol'} that  
	\begin{equation}\label{subsol_1}
	(dd^c(v-K(-\rho )^\beta))^n\geq (dd^c v)^n = e^{C_2+M}\mu \geq e^{\partial_t v+M} \mu,
	\end{equation} 
	in the viscosity sense. Combining with \eqref{super} and  
	the parabolic comparison principle  yields  $u\geq v -K(-\rho)^\beta$.   Moreover, we also have that $v(t,\cdot)$ is uniformly  $\beta$-H\"older  in $\overline{\Omega}$ (cf.  \cite{BT1,Cha}) 
	
	\medskip
	For the super-barrier, we use the  fact that the harmonic extension $u_\varphi$ of $\varphi(t,z)$ majorizes $u$ from above. Moreover it follows from classical elliptic regularity that  
	\begin{equation}
	|u_\varphi(t,z )-u_\varphi(t,w)| \leq C |z-w|^\beta, \forall t\in [0,T].
	\end{equation}
	
	Combining  both sub/super barriers implies that  there exists $B>0$ such that
	\begin{equation}\label{holder:boundary}
	\forall z\in \overline{\Omega},  \forall \xi \in \partial \Omega, \quad |u(t,z)-u(t,\xi)|\leq B|z-\xi|^\beta, \forall t\in [0,T). 
	\end{equation}
	Consider $\tau\in \mathbb{C}^n$ small with $|\tau|<1$, the function 
	$$w(t,z)=(1-|\tau|^\beta)u(t,z+\tau) + A_2|\tau|^{\beta} |z|^2-A_1|\tau|^\beta$$
	is defined on $\Omega_\tau=\{z\in \Omega| z+\tau \in \Omega\}$. Here we choose $A_2 = e^{(C_F+M)/n}$, $A_1 =A_2 diam(\Omega) + |u|_{L^\infty} +B$, where $C_F$ 
	is the H\"older constant of $F$:
	$$|F(t,z,r)- F(t,\xi,r)|\leq C_F| z-\xi|^\beta, \forall (t,r)\in [0,T)\times [-\|u\|_{L^\infty},\|u\|_{L^\infty}].$$
	It folllows from (\ref{holder:boundary}) that if $z+\tau \in \partial \Omega$ or $z\in \partial\Omega$ then
	$$ w(t,z)\leq u(t,z) - |\tau|^\beta u(t,z) +B(1-|\tau|^\beta)|\tau|^\beta +A_2diam(\Omega)|\tau|^\beta -A_1|\tau|^\beta \leq u(t,z) .$$
	We now prove that $w(t,z)\leq u(t,z)$ on $\Omega_\tau$. Assume by contradiction that it is not the case, then consider  $U_\tau=\{(t,z)\in [0,T)\times \Omega_\tau | w(t,z) > u(t,z)\}$ .
	
	We will show that $w$ is a subsolution for (\ref{PMA}) on $U_\tau$. For any $(t_0,z_0)$ and $q$ is an upper test for $v$ at $(t_0,z_0)$, then $\tilde{q}:= (1-|\tau|^\beta)^{-1}(q(t,z)-A_2|\tau|^\beta |z|^2 +A_1|\tau|^\beta$ is also a upper test for $u(\cdot,\tau+\cdot)$ at the point $(t_0,z_0)$. 
	
	\medskip
	By the definition of viscosity solution $(dd^c\tilde{ q})^n\geq e^{\dt \tilde{q} +F(t_0,z_0,u(z_0+\tau))}\mu$, so
	\begin{eqnarray*} \label{test_ineq}
		\dt q =(1-|\tau|^\beta)\dt \tilde{q}\leq  (1-|\tau|^\beta)\left( \log \frac{(dd^c \tilde{q} )^n}{\mu}   - F(t_0,z_0+\tau,u(t_0,z_0+\tau)) \right).
	\end{eqnarray*}
	Combining with  the concavity of $\log\det$ yields, at  $(t_0,z_0)$, 
	
	\begin{eqnarray}
	\log \frac{(dd^c q)^n}{\mu} &=&\log \frac{((1-|\tau|^\beta) dd^c \tilde q + |\tau|^\beta  A_2dd^c|z|^2)^n}{\mu} \\ \nonumber
	&\geq & (1-|\tau|^\beta) \log \frac{(dd^c \tilde q)^n}{\mu}  +|\tau|^\beta \log \frac{A_2^n (dd^c|z|^2)^n}{\mu} \\ \nonumber
	&=& \partial_t q +(1-|\tau|^\beta) F(t_0,z_0+\tau, u(t_0,z_0+\tau))	 +|\tau|^\beta\log A_2^n,
	\end{eqnarray}
	here for the first inequality  we use the concavity of log det  and for the last equality we  use the estimate for $\partial_t q$ above and the assumption that $\mu=(dd^c|z|^2)^n$.

	This implies that 
	\begin{equation} \label{log_conca}
	(dd^c q)^n\geq e^{\partial_t q+ (1-|\tau|^\beta) F(t_0,z_0+\tau, u(t_0,z_0+\tau))	 +|\tau|^\beta\log A_2^n}\mu.
	\end{equation}
	By the monotonicity of $F$ with respect to third variable, on $U_\tau$ we have
	\begin{eqnarray*}
		F(t_0,z_0+\tau,u(t_0,z_0+\tau)) &\geq& F(t_0,z_0+\tau, (1-|\tau|^\beta)u(t_0,z_0+\tau)   + A_2|\tau|^{\beta} |z|^2-A_1|\tau|^\beta )\\
		&=&  F(t_0,z_0+\tau,v(t_0,z_0))\\
		&\geq& F(t_0,z_0+\tau,u(t_0,z_0)).
	\end{eqnarray*}
	Combining this with  the H\"older continuity in the second variable of $F$ and the choice of $A_2$, we get 
	$$ (1-|\tau|^\beta) F(t_0,z_0+\tau,u(t_0,z_0+\tau))+|\tau|^\beta \log A_2^n\geq   F(t_0,z_0,u(t_0,z_0)) .$$
	So it follows from  \eqref{log_conca} that
	$$(dd^cq)^n\geq e^{\dt q+F(t_0,z_0)}\mu.$$
	This implies that $v$ is a viscossity subsolution to \eqref{PMA}  on $U_\tau$. Therefore the comparison principle implies  that
	$ v\leq u $ on $U_\tau$, and we get a contradiction.  Hence  $U_\tau$ is empty. Finally we infer that,
	for all $z\in \Omega$,
	\begin{center}
		$u(t,z+\tau) + A_2|\tau|^\beta |z|^2-A_1|\tau|^\beta\leq u(t,z) $.
	\end{center}
	This implies that $u$ is H\"older in the $z$ variable as required. 
\end{proof}		

\begin{proof}[Proof of Theorem \ref{mainLip}]
The H\"older continuity for $u$ on  the $z$-variable is straightforward from Proposition \ref{holder_z}.  In Proposition \ref{holder_t_2},  replacing $u_0$ by $u_s$ and using the H\"older continuity in the $z$-variable, we infer that,
for  $0\leq s\leq t $ ,
\begin{equation}
		u (t, z)-u (s, z)\leq \tilde{C}|t-s|^{\alpha}. 
	\end{equation}
Combining with Proposition \ref{holder_t_1} implies the H\"older continuity of $u$ as required.

In the case where $\varphi$ is Lipschitz in $t$, by using Proposition \ref{rgltionint1.prop}
and Proposition \ref{rgltionint2.prop}, we obtain that $u$ is locally Lipschitz in $t$
uniformly in $z$.
\end{proof}
\section{Convergence}
In this section, we prove that the viscosity solution  of  a parabolic complex Monge-Amp\`ere equation
 recovers the solution of the corresponding elliptic equation, extending the convergence result in \cite{EGZ15b}.  
\begin{The}\label{mainconvergence1}
	Consider the problem \eqref{PMA}. Assume that $T=\infty$, 
	$\varphi (t, z)\rightrightarrows\varphi_{\infty} (z)$
	as $t\rightarrow\infty$ and
	$F( t, z, r)\rightrightarrows F_{\infty}(z, r)$ in $\bar{\Omega}\times\R$
	as $t\rightarrow\infty$, where $\rightrightarrows $ denotes the uniform convergence.
	
	Assume that $\sup_{t\geq 0}f(t, z)\in L^1(\Omega)$ 
	and $f (t, z)$ converges almost everywhere
	to $f_{\infty}(z)\in L^1(\Omega)$  as $t\rightarrow\infty$.
	If \eqref{PMA} admits a solution $u$ then $u(t, z)$ converges
	in capacity to $u_{\infty}(z)$
	as $t\rightarrow\infty$, where $u_{\infty}$ is the  unique solution of the equation
	\begin{equation}\label{MAinfty_1}
	\begin{cases}
	u_{\infty}\in\mathcal{F}(\Omega, \varphi_{\infty}),\\
	(dd^cu_{\infty})_P^n=e^{F_{\infty}(z, u_{\infty})} f_\infty(z)dV (z)\qquad\mbox{in}\qquad\Omega,
	\end{cases}
	\end{equation}
		where $\mathcal{F}( \Omega, \varphi_{\infty})$ is a Cegrell class (see Definition
	\ref{def ceg class}).
	
	Moreover, if $\sup_{t\geq 0}f(t, z)\in L^{p}(\Omega)$ for some $p>1$ then 
	$u(t, z)$ converges uniformly to $u_{\infty}(z)$
	as $t\rightarrow\infty$.
\end{The}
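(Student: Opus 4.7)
The strategy is a two-sided barrier construction around the elliptic limit $u_\infty$, combined with the parabolic comparison principle. First I would establish existence and uniqueness of $u_\infty$: starting from any $v_0 \in \mathcal{F}(\Omega,\varphi_\infty)$ and iterating via Theorem \ref{the MA Cegrell}, set $v_{j+1}$ as the unique element of $\mathcal{F}(\Omega,\varphi_\infty)$ with $(dd^c v_{j+1})_P^n = e^{F_{\infty}(z,v_j)} f_\infty\, dV$. Monotonicity of $F_\infty$ in the last variable produces a monotone sequence converging to some $u_\infty$, and uniqueness in $\mathcal{F}^a(\Omega,\varphi_\infty)$ follows from the comparison principle of \cite{NP09}. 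When $\sup_t f \in L^p$ with $p>1$, Fatou's lemma gives $f_\infty \in L^p$, and Ko\l odziej's estimate forces $u_\infty \in PSH(\Omega) \cap C(\overline{\Omega})$, which is the situation of Theorem \ref{mainconvergence}.

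Next, using a constant upper barrier and a lower barrier of the form $m + M\rho$ where $\rho$ solves a Monge--Amp\`ere equation with density $\sup_{t} f$, one obtains $\|u(t,\cdot)\|_\infty \leq M$ uniformly in $t$. For the core asymptotic argument, fix $\epsilon > 0$ and choose $T_0$ so large that, for all $t \geq T_0$, both $|\varphi(t,z)-\varphi_\infty(z)| < \epsilon$ on $\partial \Omega$ and $|F(t,z,r)-F_\infty(z,r)| < \epsilon$ on $\overline{\Omega} \times [-M,M]$, together with $\|f(t,\cdot)-f_\infty\|_{L^p} < \delta(\epsilon)$ in the $L^p$ case, respectively $\int |f(t,\cdot)-f_\infty|\, dV < \delta(\epsilon)$ in the $L^1$ case. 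Invoking Lemma \ref{lem modify} with auxiliary negative psh potentials $p^{\pm}_t$ satisfying $(dd^c p^{\pm}_t)^n \geq C|f(t,z)-f_\infty(z)|\, dV$, one verifies that the functions
\[
V_{\epsilon}(t,z) = u_\infty(z) + C\epsilon - p^{+}_t(z), \qquad W_{\epsilon}(t,z) = u_\infty(z) - C\epsilon + p^{-}_t(z)
\]
are, respectively, a supersolution and a subsolution of the full parabolic equation on $(T_0,\infty) \times \Omega$, dominating (resp.\ dominated by) $u$ on the parabolic boundary. Applying the parabolic comparison Theorem \ref{compa.the} on slabs $(T_0,T_0+S)\times \Omega$ and letting $S \to \infty$ squeezes $u(t,z)$ between $W_\epsilon(t,z)$ and $V_\epsilon(t,z)$.

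Letting $t \to \infty$ and then $\epsilon \to 0$ separates the two regimes: Ko\l odziej's $L^\infty$-estimate (Theorem \ref{the GKZ08}) turns the $L^p$-smallness of $f(t,\cdot) - f_\infty$ into $\|p^{\pm}_t\|_\infty \to 0$, yielding the uniform convergence $u(t,\cdot) \rightrightarrows u_\infty$ claimed in the ``Moreover'' part; in the $L^1$-only case the potentials $p^{\pm}_t$ lie in $\mathcal{F}^a(\Omega)$ and decrease to $0$ merely in capacity by Cegrell's continuity of the Monge--Amp\`ere operator, producing convergence in capacity. The main obstacle is that Theorem \ref{compa.the} requires one of $\mu > 0$, $\mu$ independent of $t$, or Lipschitz-in-$t$ regularity, none of which holds here since $f$ varies in $t$ and may vanish; Lemma \ref{lem modify} is designed precisely to absorb the defect $|f(t,z)-f_\infty(z)|$ into the psh perturbation $p^{\pm}_t$, reducing the parabolic comparison to a case covered by that theorem. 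The tightness of this absorption in $L^\infty$ versus only in capacity is exactly what dictates the two conclusion topologies stated in Theorem \ref{mainconvergence1}.
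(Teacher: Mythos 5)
Your barrier construction has a genuine gap at the parabolic boundary. You propose to squeeze $u$ between $W_\epsilon = u_\infty - C\epsilon + p^-_t$ and $V_\epsilon = u_\infty + C\epsilon - p^+_t$ on $(T_0,\infty)\times\Omega$ via Theorem \ref{compa.the}, but that theorem requires the inequalities to hold on the whole parabolic boundary of the slab, which includes the initial slice $\{T_0\}\times\Omega$. There you would need $u_\infty - C\epsilon + p^-_{T_0} \leq u(T_0,\cdot) \leq u_\infty + C\epsilon - p^+_{T_0}$, i.e.\ precisely the closeness of $u(T_0,\cdot)$ to $u_\infty$ that the theorem is supposed to establish. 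Nothing in the hypotheses controls $u(T_0,\cdot)$ relative to $u_\infty$ (the initial datum $u_0$ is arbitrary), so the comparison cannot be launched. Time-independent barriers centered at $u_\infty$ have no mechanism for ``forgetting'' the initial data; some dynamical ingredient is unavoidable. This is exactly what the paper supplies: for large $T_2$ it runs an auxiliary \emph{autonomous} flow $u_{T_2}$ started from $u(T_2,\cdot)-\epsilon$ with frozen density $\mu_{T_2}=\sup_{s\in[T_2,T_2+1]}f(s,\cdot)\,dV$ and nonlinearity $F_\infty$, sandwiches $u(\cdot+T_2,\cdot)$ between $u_{T_2}+\phi$ and $u_{T_2}(\cdot+\delta,\cdot)-\phi+2\epsilon$ using Lemma \ref{lem modify} (your absorption idea is the right one for this step), and then invokes the known long-time convergence theorem for time-independent data (Theorem 6.2 of \cite{EGZ15b}) to send $u_{T_2}(t,\cdot)$ to an elliptic solution $\tilde u$, which is finally compared with $u_\infty$ at the pluripotential level.

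A second, related problem: in the $L^1$ case $u_\infty$ is only an element of $\mathcal{F}(\Omega,\varphi_\infty)$ solving the equation in the pluripotential sense; it may be unbounded and discontinuous, so it cannot be fed into the viscosity comparison principle (Theorem \ref{compa.the} is stated for bounded semicontinuous sub/supersolutions, and the equation for $u_\infty$ is not known to hold in the viscosity sense). The paper sidesteps this by doing all parabolic comparisons against the bounded continuous objects $u_{T_2}$ and $\tilde u$, and comparing $\tilde u$ with $u_\infty$ only through the elliptic comparison principle in Cegrell classes. Your identification of the two convergence topologies ($L^p$-smallness giving $\|\phi\|_\infty\to 0$ via Theorem \ref{the GKZ08}, versus $L^1$-smallness giving only capacity control of $\{\phi<-\epsilon\}$) is correct and matches the paper's final step, but it sits on top of a comparison argument that, as set up, does not close.
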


Here \textit{the uniform convergence  in capacity} means that, for every $\epsilon>0$,
there exists an open set $U\subset\Omega$ such that
\begin{center}
	$Cap (U, \Omega):=\sup\{\int\limits_U(dd^cw)^n: w\in PSH(\Omega), 0\leq w\leq 1 \}<\epsilon,$
\end{center}
 and $u(t, z)$ converges uniformly to $u_{\infty}(z)$ in $\Omega\setminus U$ as
$t\rightarrow\infty$. By the  countable subadditivity of capacity, this is equivalent to the following: For every $\epsilon>0$,
there exist an open set $U\subset\Omega$ and $T>0$ such that $Cap (U, \Omega)<\epsilon$ and $|u(t, z)-u_{\infty}(z)|<\epsilon$
for every $(t, z)\in (T, \infty)\times (\Omega\setminus U)$.

\begin{proof}
	Let $1\gg \epsilon>0$.
	For every $T>0$, we consider the problem
	\begin{equation}\label{PMA T}
	\begin{cases}
	e^{\dt w+F_{\infty}(z, w)}(1+\epsilon^{n+1})\mu_T(z)=(dd^cw)^n\qquad
	\mbox{ in }\qquad (0,\infty)\times\Omega,\\
	w(t, z)=\varphi (T, z)-\epsilon\qquad\mbox{in}\qquad [0, T)\times\partial\Omega,\\
	w(0, z)=u (T, z)-\epsilon\qquad\mbox{in}\quad\bar{\Omega},
	\end{cases}
	\end{equation}
	where $\mu_T(z)=\sup\limits_{t\in [T, T+1]}f(t, z) dV$. It follows from Lemma
	\ref{rglzingint.lem} that $(u (T, z), \mu_T(z))$ is admissible for every $T$.
	Hence, \eqref{PMA T} admits a unique solution $u_T(t, z)$.
	
	 Let $T_1>0$ such that
	\begin{equation}
	|F(t, z, r)-F_{\infty}(z, r)|<\log (1+\epsilon^{n+1}),
	\end{equation}
	for every $(t, z, r)\in [T_1, \infty)\times\Omega\times\R$ and
	\begin{equation}
	|\varphi(t, z)-\varphi_{\infty} (z)|<\epsilon,
	\end{equation}
	for every $(t, z)\in [T_1, \infty)\times\partial\Omega$.
	
	 We will find $T_2>T_1$, $0<\delta\ll 1$ and $\phi\in\mathcal{F}(\Omega)$ with
	 $Cap(\{\phi<-\epsilon\}, \Omega)=O(\epsilon)$ such that
	 $u_{T_2}(t, z)+\phi$ is a subsolution to the problem
	  \begin{equation}\label{PMA1 T2}
	  \begin{cases}
	  e^{\dt w+F_{\infty}(z, w)}(1+\epsilon^{n+1})\sup\limits_{s\in [T_2,T_2+ T']}
	  f(s, z)dV=(dd^cw)^n\qquad
	  \mbox{ in }\qquad (0, T')\times\Omega,\\
	  w(t, z)=\varphi (T_2, z)\qquad\mbox{in}\qquad [0, T')\times\partial\Omega,\\
	  w(0, z)=u (T_2, z)\qquad\mbox{in}\quad\bar{\Omega},
	  \end{cases}
	  \end{equation}
	and $u_{T_2}(t+\delta, z)-\phi+2\epsilon$ is
	a supersolution to the problem
	 \begin{equation}\label{PMA2 T2}
	\begin{cases}
	e^{\dt w+F_{\infty}(z, w)}(1-\epsilon^{n+1})\inf\limits_{s\in [T_2,T_2+ T']}
	f(s, z)dV=(dd^cw)^n\qquad
	\mbox{ in }\qquad (0, T')\times\Omega,\\
	w(t, z)=\varphi (T_2, z)\qquad\mbox{in}\qquad [0, T')\times\partial\Omega,\\
	w(0, z)=u (T_2, z)\qquad\mbox{in}\quad\bar{\Omega},
	\end{cases}
	\end{equation}
	for every $T'>\delta$. 
		
	By Proposition \ref{holder_t_1}, there exists $\delta>0$ such that
	\begin{equation}
	u_T(t+s, z)\geq u_T(t, z)-\epsilon,
	\end{equation}
	for every $t, T>0, z\in\Omega$ and $s\in [0, \delta]$. By Corollary \ref{rgltionint.cor},
	there exists $C_1>0$ such that
	\begin{equation}
	|\dt u_T(t, z)|\leq C_1,
	\end{equation}
	for every $T>0$, $z\in\Omega$ and $t\geq T+\delta$.
	
	By  Lebesgue's dominated convergence theorem, $\sup_{s\geq t}f(s, z)$
	and $\inf_{s\geq t} f(s, z)$ are convergent to $f_{\infty}(z)$ in $L^1(\Omega)$
	as $t\rightarrow\infty$. Hence,
	\begin{center}
		$\lim\limits_{t\to\infty}\int\limits_{\Omega}
		|\sup_{s\geq t}f(s, z)-\inf_{s\geq t} f(s, z)|dV=0$.
	\end{center}
	Let $T_2>T_1$ such that
	\begin{equation}
	\int\limits_{\Omega}
	|\sup_{s\geq T_2}f(s, z)-\inf_{s\geq T_2} f(s, z)|dV<\dfrac{e^{-C_1-C_2}\epsilon^{n+1}}{n!},
	\end{equation}
	where $C_2=\sup F( ., ., \sup \varphi_{\infty} +\epsilon)$.
	
	Let $\phi$ be the unique solution to the equation
	\begin{equation}
	\begin{cases}
	\phi\in\mathcal{F}(\Omega),\\
	(dd^c\phi)_P^n=e^{C_1+C_2}|(1+\epsilon^{n+1})\sup_{s\geq T_2}f(s, z)
	-(1-\epsilon^{n+1})\inf_{s\geq T_2} f(s, z)|dV.
	\end{cases}
	\end{equation}

	Then, by applying Lemma \ref{lem modify} for $u_{T_2}(t+\delta, z)$, $\phi(z)$ and the equation
	\begin{center}
		$e^{\dt w+F_{\infty}(z, w)}(1+\epsilon^{n+1})\mu_{T_2}(z)=(dd^cw)^n$,
	\end{center}
	in $(0, T')\times\Omega$ for all $T'>\delta$, 
	we get that $u_{T_2}(t+\delta, z)-\phi(z)+2\epsilon$
	 is a supersolution to \eqref{PMA2 T2} and $u_{T_2}(t, z)+\phi(z)$ is a subsolution to
	 \eqref{PMA1 T2}.
	
	Note that $u(t+T_2, z)$ is a subsolution to \eqref{PMA2 T2} and a supersolution
	to \eqref{PMA1 T2}. Since $u_{T_2}(t, z)$ is locally Lipschitz in $t$ uniformly in $z$,
	 applying Theorem \ref{compa.the}	and letting $T'\rightarrow\infty$, we get
	\begin{equation}\label{eq1 proof convergence}
	u(t+T_2, z)\leq u_{T_2}(t+\delta, z)-\phi(z)+2\epsilon,
	\end{equation}
	and
	\begin{equation}\label{eq2 proof convergence}
	u(t+T_2, z)\geq u_{T_2}(t, z)+\phi(z),
	\end{equation}
	for every $(t, z)\in (0, \infty)\times\Omega$.
	
	It follows from Theorem 6.2 in
	\cite{EGZ15b} that $u_{T_2}(t, z)$ converges uniformly to the solution
	$\tilde{u}$ of the equation
	\begin{equation}\label{MAinfty_2}
	\begin{cases}
	(dd^cw)^n=e^{F_{\infty}(z, w)}(1+\epsilon^n)\mu_{T_2} (z)\qquad\mbox{in}\qquad\Omega,\\
	w=\varphi_{\infty}-\epsilon\qquad\mbox{in}\qquad\partial\Omega.
	\end{cases}
	\end{equation}
	Hence, by \eqref{eq1 proof convergence} and \eqref{eq2 proof convergence}, there exists
	$T_3>0$ such that, for every $t>T_3$,
	\begin{equation}\label{eq3 proof convergence}
	\tilde{u}(z)-\epsilon\leq u(t, z)\leq \tilde{u}(z)-\phi(z)+3\epsilon.
	\end{equation}
	It is easy to check that $\tilde{u}+\phi$ is a subsolution to \eqref{MAinfty_1}
	and  $u_{\infty}+\phi-\epsilon$ is a subsolution to \eqref{MAinfty_2}. Then
	\begin{equation}\label{eq4 proof convergence}
	\tilde{u}+\phi\leq u_{\infty}\leq \tilde{u}-\phi+\epsilon.
	\end{equation}
	Combining \eqref{eq3 proof convergence} and \eqref{eq4 proof convergence}, we get
	\begin{equation}\label{eq5 proof convergence}
	|u(t, z)-u_{\infty}(z)|\leq -2\phi +3\epsilon,
	\end{equation}
	for every $t>T_3, z\in\Omega$.
	
	Moreover, it follows from Proposition 3.4 in \cite{NP09} that
	\begin{center}
		$Cap (\{\phi<-\epsilon \}, \Omega)\leq (1+n!2C_3e^{C_1+C_2})\epsilon,$
	\end{center}
	where $C_3=\int_{\Omega}\sup_{t>0}f(t, z)fV$.
	
	Hence, $u(t, z)$ converges uniformly
	in capacity to $u_{\infty}(z)$ as $t\rightarrow\infty$.
	
	If $\sup_{t\geq 0}f(t, z)\in L^{p}(\Omega)$ for some $p>1$ then we can choose $T_2$ such that
	\begin{equation}
	\int\limits_{\Omega}
	|\sup_{s\geq T_2}f(s, z)-\inf_{s\geq T_2} f(s, z)|^pdV<\dfrac{e^{-C_1-C_2}\epsilon^{n+1}}{n!}.
	\end{equation}
	Then, by \eqref{eq5 proof convergence} and by using Theorem 1.1 in \cite{GKZ} for
	$\phi$ and $0$, we obtain the uniform convergence of $u( t, z)$ as $t\rightarrow\infty$.
\end{proof}


\begin{thebibliography}{biblio}
{\small
\bibitem[Aha07]{Aha07} P. Ahag, {\it A Dirichlet problem for the complex Monge-Amp\`ere operator in $\mathcal{F}(f)$},  Michigan Math. J. \textbf{55} (2007), no. 1, 123--138. 
	
\bibitem[ACCP09]{ACCP09} P. Ahag, U. Cegrell, R. Czyz, H.-H. Pham: {\it Monge-Amp\`ere measures on pluripolar sets},  J. Math. Pures Appl. (9) \textbf{92}  (2009), no. 6, 613--627.

\bibitem[Blo06]{Blo06} Z. Blocki: {\em The domain of definition of the complex Monge-Amp\`ere operator,}  Amer. J. Math.	\textbf{128}  (2006), no.2, 519--530.

\bibitem[BT76]{BT1} E. Bedford, B. A. Taylor,{\em The dirichlet problem for a complex Monge-Amp\`ere equation}, Invent. Math. \textbf{37} (1976), no. 1,  1--44 .

\bibitem[BT82]{BT2} E. Bedford, B. A. Taylor, {\em A new capacity for plurisubharmonic functions}, Acta Math.  \textbf{149} (1982), no. 1,  1--40.

\bibitem[Ceg84]{Ce84}
U. Cegrell,{\it  On the Dirichlet problem for the complex Monge-Amp\`ere operator}, Math. Z. {\bf 185}, no. 2, 247--251 (1984).

\bibitem[Ceg98]{Ceg98}
U. Cegrell, {\it Pluricomplex energy}, Acta Math. {\bf 180} (1998), 187–217.

\bibitem[Ceg04]{Ceg04} U. Cegrell, {\it The general definition of the complex Monge-Amp\`ere operator}, Ann. Inst. Fourier, 
 \textbf{54} (2004), no. 1, 159--179.

\bibitem[Cha15]{Cha}
M. Charabati, {\it H\"older regularity for solutions to complex Monge-Amp\`ere equations}, Ann. Pol. Math., Tome 113 (2015) no. 2, pp. 109-127 


\bibitem[CIL92]{CIL92} M. G. Crandall, H. Ishii and P. L. Lions {\em User's guide to viscosity solutions of second order partial differential equations}, Bull. Amer. Math. Soc. {\bf 27} (1992), no. 1, 1--67.
\bibitem[DDGHKZ]{DDGHKZ}
J.-P.Demailly, S.Dinew,V, Guedj, P.H.Hiep, S.Kolodziej, A.Zeriahi, {\it H\"older continuous solutions to Monge–Amp\`ere equations}, J. Eur. Math. Soc. {\bf 16} (2014), 619--647.

\bibitem[DDT19]{DDT}
S. Dinew, H-S Do and T. D. T\^o, {\it A viscosity approach to the Dirichlet problem for degenerate complex Hessian type equations}, Analysis \& PDE, {\bf 12} (2019), No. 2, 505--535.

\bibitem[EGZ11]{EGZ} P. Eyssidieux, V. Guedj and A. Zeriahi, {\em Viscosity solutions to degenerate complex Monge-Amp\`ere equations,} Comm. Pure Appl. Math. {\bf 64} (2011), no. 8, 1059--1094.


\bibitem[EGZ15a]{EGZ15}
\bysame, \emph{Continuous approximation of quasiplurisubharmonic functions},
  Cont. Math. \textbf{644} (2015), 67--78.

\bibitem[EGZ15b]{EGZ15b}
\bysame, \emph{Weak solutions to degenerate complex {M}onge-{A}mp\`ere {f}lows
  {I}}, Math. Ann. \textbf{362} (2015), 931--963.

\bibitem[EGZ16]{EGZ16}
\bysame, \emph{Weak solutions to degenerate complex {M}onge-{A}mp\`ere {f}lows
  {II}}, Adv. Math. {\bf 293} (2016), 37--80.

  
\bibitem[EGZ18]{EGZ18}
\bysame, {\em Convergence of weak K\"ahler-Ricci flows on minimal models of positive Kodaira dimension}, Comm. Math. Phys.
{\bf 357} (2018), no. 3, 1179--1214.

\bibitem[GKZ08]{GKZ}
V.  Guedj, S. Kolodziej and A. Zeriahi, 
{\it H\"older continuous solutions to Monge-Amp\`ere equations}, Bull. London Math. Soc. {\bf 40}, no. 6, 1070--1080 (2008).

\bibitem[GLZ1]{GLZ1} 
V. Guedj, C.H .Lu, A. Zeriahi, {\it The pluripotential Cauchy-Dirichlet problem for complex Monge-Amp\`ere flows}, 	arXiv:1810.02122 (2018).
\bibitem[GLZ2]{GLZ2} 
V. Guedj, C.H .Lu, A. Zeriahi, {\it Pluripotential K\"ahler-Ricci flows}, 	arXiv:1810.02121  (2018).

\bibitem[GLZ3]{GLZ3}
V. Guedj, C.H .Lu, A. Zeriahi, {\it Pluripotential versus viscosity solutions to complex Monge-Amp\`ere flows}, arXiv 1909.07069 (2019).

\bibitem[GLZ4]{GLZ4}
V. Guedj, C.H .Lu, A. Zeriahi,{\it  Stability of solutions to complex Monge-Ampere flows},
 	 Ann. Inst. Fourier, {\bf 68} (2018), 2819--2836.
\bibitem[GZ05]{GZ05} V. Guedj, A. Zeriahi, {\em Intrinsic capacities on compact K\"ahler manifolds,} J. Geom. Anal. 
{\bf 15}, no. 4, 607--639 (2005).

  \bibitem[GZ17]{GZ17} V. Guedj and A. Zeriahi, {\it Degenerate complex Monge-Amp\`ere equations}, EMS Tracts in Mathematics, 26. European Mathematical Society (EMS), Zürich, 2017. xxiv+472 pp. ISBN: 978-3-03719-167-5

\bibitem[HL09]{HL}
F.R. Harvey and  H.B.Lawson, {\it  Dirichlet duality and the nonlinear Dirichlet problem,} 
Commun. Pure Appl. Math. {\bf 62}, no. 3, 396--443 (2009).

 \bibitem[IL90]{IL90}
 H.~Ishii, and P.L. Lions,\emph{Viscosity solutions of fully nonlinear second-order elliptic partial differential equations}, J. Differential Equations 83 (1990), no. 1, 26–78.
 
\bibitem[IS13]{IS13} C. Imbert, L. Silvestre:  An introduction to fully nonlinear parabolic equations. {\em An introduction to the	K\"ahler-Ricci flow,} 7--88, Lecture Notes in Math.,  {\bf 2086}, Springer, Cham (2013).


\bibitem[Kol98]{K98} S. Ko\l odziej, {\em The complex Monge-Amp\`ere equation,} Acta Math. { \bf 180}, (1998) no. 1,  69--117.
\bibitem[Lu13]{Lu} H.C Lu, {\em Viscosity solutions to complex Hessian equations}, J. Funct. Anal. {\bf 264} (2013), no. 6, 1355-1379. 
 
\bibitem[NP09]{NP09} V.-K. Nguyen, H.-H. Pham: {\em A comparison principle for the complex Monge-Amp\`ere operator in Cegrell's classes and applications,} Trans. Amer. Math. Soc. {\bf 361}  (2009), no. 10, 5539--5554.


\bibitem[ST12]{ST12}
J. Song and G, Tian, {\it Canonical measures and K\"ahler Ricci flow}, J. Am. Math. Soc. {\bf 25}, no. 2, 303--353 (2012).
\bibitem[ST17]{ST}
 J. Song and G, Tian, \emph{The {K}\"ahler-{R}icci flow through singularities},  Invent. Math. {\bf 207} (2017), no. 2, 519--595.

\bibitem[T\^o19]{To19}
T. D. T\^o, \emph{Convergence of the weak K\"ahler-Ricci Flow on manifolds of general type}, 	arXiv:1905.01276. 

\bibitem[Wal68]{Wa} J. B. Walsh {\em Continuity of Envelopes of Plurisubharmonic Functions }, J. Math. Mech. 
{\bf 18} (1968), no. 2, 143--148.
 

\bibitem[Wan12]{Wa12} 
Y. Wang {\em A Viscosity Approach to the Dirichlet Problem for Complex Monge-Amp\`ere Equations},
 Math. Z.  {\bf 272} (2012), Issue 1--2, pp 497--513.

\bibitem[Xin96] {Xin}
  Y. Xing,
{ \it Continuity of the Complex Monge-Amp\`ere Operator}, Proc. Amer. Math. Soc.
 {\bf 124} (1996), No. 2,  457--467.
}
\end{thebibliography}
\end{document}